\newtheorem{thm}{Theorem}
\newtheorem{lem}[thm]{Lemma}
\theoremstyle{definition}
\newtheorem*{remark}{Remark}
\xpatchcmd{\proof}{\itshape}{\normalfont\proofnameformat}{}{}
\newcommand{\proofnameformat}{}
\begin{document}

\renewcommand{\proofnameformat}{\bfseries}

\begin{center}
{\Large\textbf{Limit laws of maximal Birkhoff sums for circle rotations via quantum modular forms}}

\vspace{10mm}

\textbf{Bence Borda}

{\footnotesize Graz University of Technology

Steyrergasse 30, 8010 Graz, Austria

Email: \texttt{borda@math.tugraz.at}}

\vspace{5mm}

{\footnotesize \textbf{Keywords:} continued fraction, Gauss map, Ostrowski expansion, Farey fraction,

quadratic irrational, Kashaev invariant, Sudler product}

{\footnotesize \textbf{Mathematics Subject Classification (2020):} 37A50, 37E10, 11F37, 11K60}
\end{center}

\vspace{5mm}

\begin{abstract}
In this paper, we show how quantum modular forms naturally arise in the ergodic theory of circle rotations. Working with the classical Birkhoff sum $S_N(\alpha)=\sum_{n=1}^N (\{ n \alpha \}-1/2)$, we prove that the maximum and the minimum as well as certain exponential moments of $S_N(r)$ as functions of $r \in \mathbb{Q}$ satisfy a direct analogue of Zagier's continuity conjecture, originally stated for a quantum invariant of the figure-eight knot. As a corollary, we find the limit distribution of $\max_{0 \le N<M} S_N(\alpha)$ and $\min_{0 \le N<M} S_N(\alpha)$ with a random $\alpha \in [0,1]$.
\end{abstract}

\section{Introduction}

The main goal of this paper is to introduce methods originally developed in connection with Zagier's quantum modular forms \cite{ZA} to the ergodic theory of circle rotations. We demonstrate the power of these tools by considering the classical Birkhoff sum $S_N (\alpha) = \sum_{n=1}^N (\{ n \alpha \} - 1/2)$, where $\{ \cdot \}$ denotes the fractional part function. The history of the sum $S_N(\alpha)$ goes back a hundred years to Hardy and Littlewood \cite{HL1,HL2}, Hecke \cite{HE} and Ostrowski \cite{OS}, with the original motivation coming from Diophantine approximation, lattice point counting in triangles and analytic number theory. We have $S_N(\alpha)=o(N)$ for any irrational $\alpha$, but the precise behavior is rather delicate and depends on the Diophantine properties of $\alpha$. It is enough to consider $\alpha \in [0,1]$, and we shall focus on the case of a randomly chosen $\alpha$.

Throughout, $X \sim \mu$ denotes the fact that a random variable $X$ has distribution $\mu$, $\mu \otimes \nu$ denotes the product measure of $\mu$ and $\nu$, and $\overset{d}{\to}$ denotes convergence in distribution. The standard stable law of stability parameter $1$ and skewness parameter $\pm 1$, denoted by $\mathrm{Stab}(1,\pm 1)$, is the law with characteristic function $\exp (-|x| (1 \pm i \frac{2}{\pi} \mathrm{sgn} (x) \log |x|))$. The standard stable law of stability parameter $1$ and skewness parameter $0$ is in fact the standard Cauchy distribution with characteristic function $\exp (-|x|)$ and density function $1/(\pi (1+x^2))$, and will be denoted simply by ``Cauchy''.

The first distributional result is due to Kesten \cite{KE}, who proved that if $(\alpha,\beta) \sim \mathrm{Unif}([0,1]^2)$, then
\begin{equation}\label{kesten}
\frac{\sum_{n=1}^N \left( \{ n \alpha + \beta \} -1/2 \right)}{\sigma \log N} \overset{d}{\to} \mathrm{Cauchy}
\end{equation}
as $N \to \infty$, with an explicit constant $\sigma>0$. Note that in addition to $\alpha$, the starting point $\beta$ of the orbit is also chosen randomly, independently of $\alpha$. Whether a similar limit law holds for a fixed value of $\beta$ is still open. Dolgopyat and Sarig \cite{DS1} showed, however, that for any fixed $\beta \in \mathbb{R}$ and $(\alpha, N) \sim \mathrm{Unif} ([0,1] \times \{ 1,2,\ldots, M \})$, the limit law \eqref{kesten} holds as $M \to \infty$ with the different constant $\sigma=\frac{1}{3 \pi \sqrt{3}}$. Let us also mention a theorem of Beck \cite{BE} concerning $\beta=0$, a fixed quadratic irrational $\alpha$ and $N \sim \mathrm{Unif}(\{ 1,2,\ldots, M \})$, in which case $(S_N(\alpha) - c_1 \log N)/(c_2 \sqrt{\log N})$ converges in distribution to the standard Gaussian with suitable constants $c_1 \in \mathbb{R}$ and $c_2>0$ depending on $\alpha$.

In this paper, we work with $S_N (\alpha) = \sum_{n=1}^N (\{ n \alpha \} - 1/2)$ with the fixed starting point $\beta =0$, and instead of choosing $N$ randomly, we consider the extreme values $\max_{0 \le N<M} S_N(\alpha)$ and $\min_{0 \le N<M} S_N (\alpha)$ as well as certain exponential moments of the values $S_N(\alpha)$, $0 \le N<M$. Our main distributional result is a limit law for the joint distribution of the maximum and the minimum.
\begin{thm}\label{maxmintheorem} Let $\alpha \sim \mu$ with a Borel probability measure $\mu$ on $[0,1]$ which is absolutely continuous with respect to the Lebesgue measure. Then
\[ \left( \frac{\displaystyle{\max_{0 \le N<M} S_N (\alpha) - E_M}}{\sigma_M}, \frac{\displaystyle{\min_{0 \le N<M} S_N (\alpha) + E_M}}{\sigma_M} \right) \overset{d}{\to} \mathrm{Stab}(1,1) \otimes \mathrm{Stab}(1,-1) \qquad \textrm{as } M \to \infty, \]
where $E_M = \frac{3}{4 \pi^2} \log M \log \log M + D_{\infty} \log M$ with some constant $D_{\infty} \in \mathbb{R}$, and $\sigma_M = \frac{3}{8 \pi} \log M$.
\end{thm}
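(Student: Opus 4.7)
The plan is to exploit the quantum-modular continuity of $\max_{0 \le N<M} S_N$ and $\min_{0 \le N<M} S_N$ as functions on $\mathbb{Q}$ (the analogue of Zagier's conjecture that the abstract identifies as the technical core of the paper). As a first step I would show that for a typical $\alpha$ with principal convergents $p_k/q_k$, the random variable $\max_{0 \le N<M} S_N(\alpha)$ is within $o(\log M)$ of $\max_{0 \le N<M} S_N(p_K/q_K)$, where $K = K(\alpha, M)$ is chosen so that $q_K \le M < q_{K+1}$, and similarly for the minimum. This converts the ergodic problem on the circle into the asymptotic analysis of an arithmetic function on $\mathbb{Q}$.

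The second step is to decode the arithmetic function via the Ostrowski numeration $N = \sum_{j=0}^{K} c_j q_j$. Since the errors $q_j\alpha - p_j$ alternate in sign, $S_N(p_K/q_K)$ decomposes into an alternating sum of contributions indexed by the digits $c_j$; to maximize one should saturate $c_j = a_{j+1}$ at even $j$ and take $c_j = 0$ at odd $j$, and conversely for the minimum. The leading contributions to the maximum and minimum are therefore carried by the partial quotients $\{ a_j : j \text{ even}\}$ and $\{ a_j : j \text{ odd}\}$ respectively, and this disjointness of support is the structural reason behind the product form $\mathrm{Stab}(1,1) \otimes \mathrm{Stab}(1,-1)$ in the limit.

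The third step is a stable limit theorem on the Gauss system. Under the Gauss measure, $\Pr(a_j > t) \sim 1/(t \log 2)$ places $(a_j)$ in the domain of attraction of the one-sided stable law of index $1$, so an Aaronson--Denker-type distributional ergodic theorem yields $(\sum_{j \le K} a_j - c_1 K \log K)/(c_2 K) \overset{d}{\to} \mathrm{Stab}(1,1)$ with explicit constants. Khinchin--L\'evy's law $\log q_K \sim (\pi^2/(12 \log 2)) K$ converts $K$-scaling into $\log M$-scaling and produces the stated centering $E_M$ and scaling $\sigma_M$; the exponential mixing of the Gauss map gives asymptotic independence of the even- and odd-indexed partial quotients; and absolute continuity of $\mu$ with respect to Lebesgue transfers the limit from the Gauss measure to $\mu$ by a standard density argument.

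I expect the main obstacle to be the uniformity required in the first step. The quantum-modular continuity statement on $\mathbb{Q}$ must yield an error of order $o(\log M)$ on a set of $\alpha$ of probability $1 - o(1)$, even when one partial quotient $a_j$ is atypically large; otherwise the rare configurations carrying a single large partial quotient could contaminate both the maximum and the minimum simultaneously and destroy the product-form limit. Overcoming this seems to require combining the arithmetic continuity bound on $\mathbb{Q}$ with a probabilistic truncation argument that isolates the large partial quotient and routes its contribution to exactly one of the two extremes.
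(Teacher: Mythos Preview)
Your overall architecture matches the paper's: reduce $\max/\min$ over $N<M$ to the convergent $p_K/q_K$, decompose via the Ostrowski structure so that the maximum is driven by the even-indexed partial quotients and the minimum by the odd-indexed ones, apply a stable limit theorem on the Gauss system with Khinchin--L\'evy to convert $K$-scaling to $\log M$-scaling, and transfer from the Gauss measure to $\mu$. The paper packages the first two steps through the functions $h_{\pm\infty}(r) = \log J_{\pm\infty}(r) - \log J_{\pm\infty}(T^2 r)$, proving $h_{\pm\infty}(r) = \pm a_{\varepsilon_\pm}/8 + g_{\pm\infty}(r)$ with $g_{\pm\infty}$ extending to an a.e.\ continuous integrable function on $[0,1]$; the telescoping $\log J_{\pm\infty}(p_k/q_k) = \sum_{j\ge 0} h_{\pm\infty}(T^{2j}(p_k/q_k))$ then cleanly separates the stable part $\pm\frac{1}{8}\sum a_{2j+\varepsilon_\pm}$ from a Birkhoff average of $g_{\pm\infty}$ under $T^2$.

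There is, however, a concrete error in your Step~2. You claim that to maximize $S_N$ one should ``saturate $c_j = a_{j+1}$ at even $j$ and take $c_j = 0$ at odd $j$''. This is wrong: the dominant contribution of the digit $b_\ell$ in Ostrowski's formula is
\[
(-1)^{\ell+1}\,\frac{b_\ell}{2}\Bigl(1 - \frac{b_\ell}{a_{\ell+1}}\Bigr),
\]
a parabola in $b_\ell$ that \emph{vanishes} at both endpoints $b_\ell=0$ and $b_\ell=a_{\ell+1}$ and attains its extreme value $\pm a_{\ell+1}/8$ at the \emph{midpoint} $b_\ell \approx a_{\ell+1}/2$. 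Your prescription therefore yields $S_N \approx 0$, not the maximum. The correct optimizer takes $b_\ell \approx a_{\ell+1}/2$ at odd $\ell$ (where the sign is $+$) and $b_\ell=0$ at even $\ell$, giving $\max S_N \approx \frac{1}{8}\sum_j a_{2j+2}$; the minimum swaps parities. Your conclusion about \emph{which} partial quotients govern which extreme survives, but the mechanism and the coefficient $1/8$ (hence the explicit $\sigma_M = \frac{3}{8\pi}\log M$) come from the vertex of this parabola, not from an endpoint.

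Your anticipated obstacle is also slightly off. The paper does not isolate a large partial quotient by probabilistic truncation; instead a factorization lemma at an index $k$ with $a_{k+1}$ large, combined with the continuity estimate for $h_{\pm\infty}$, already routes each large $a_{k+1}$ to exactly one extreme with error $O(\sqrt{(\log a_{k+1})/a_{k+1}})$. The real technical work is proving the uniform asymptotic $h_{\pm\infty}(r)=\pm a_{\varepsilon_\pm}/8 + O(1)$ and the a.e.\ continuity of $g_{\pm\infty}$, after which the limit law is a routine application of mixing for the Gauss map.
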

\noindent In particular,
\[ \frac{\displaystyle{\max_{0 \le N <M} S_N(\alpha) - E_M}}{\sigma_M} \overset{d}{\to} \mathrm{Stab} (1,1) \qquad \textrm{and} \qquad \frac{\displaystyle{\min_{0 \le N <M} S_N(\alpha) + E_M}}{\sigma_M} \overset{d}{\to} \mathrm{Stab} (1,-1) . \]
The fact that the limit distribution in Theorem \ref{maxmintheorem} is a product measure means that the maximum and the minimum of $S_N(\alpha)$ are asymptotically independent. The formulation as a joint limit law has the advantage that we immediately obtain limit laws for quantities such as $\max - \min$ (the diameter of the range of $S_N (\alpha)$, $0 \le N<M$), and for $(\max +\min)/2$ (the center of the range) as well:
\[ \frac{\displaystyle{\max_{0 \le N <M} S_N(\alpha) - \min_{0 \le N<M} S_N(\alpha) - B_M}}{2\sigma_M} \overset{d}{\to} \mathrm{Stab} (1,1), \quad \frac{\displaystyle{\max_{0 \le N<M} S_N(\alpha) + \min_{0 \le N<M} S_N(\alpha)}}{2 \sigma_M} \overset{d}{\to} \mathrm{Cauchy} \]
with $B_M=2E_M+\frac{4}{\pi} (\log 2) \sigma_M$. Indeed, if $X,Y \sim \mathrm{Stab}(1,1)$ are independent random variables, then $-X \sim \mathrm{Stab}(1,-1)$, $\frac{X+Y}{2}-\frac{2}{\pi} \log 2 \sim \mathrm{Stab}(1,1)$ and $\frac{X-Y}{2} \sim \mathrm{Cauchy}$, as can be easily seen from the characteristic functions. Theorem \ref{maxmintheorem} similarly implies that
\[ \frac{\displaystyle{\max_{0 \le N<M} |S_N(\alpha)| -E_M}}{\sigma_M} \overset{d}{\to} \max \{ X,Y \} \qquad \textrm{as } M \to \infty . \]
The cumulative distribution function of $\max \{ X,Y \}$ is simply the square of that of $\mathrm{Stab}(1,1)$.

Limit laws of Birkhoff sums for circle rotations $\sum_{n=1}^N f(n \alpha + \beta)$ with some of the parameters $N,\alpha,\beta$ chosen randomly have also been established for other $1$-periodic functions $f$, such as the indicator of a subinterval of $[0,1]$ extended with period $1$, or smooth functions with a logarithmic or power singularity. We refer to \cite{DF} for an exhaustive survey. In an upcoming paper we will prove similar limit laws for the maximum and the minimum of $\sum_{n=1}^N f(n \alpha)$ with $f$ the indicator of a subinterval of $[0,1]$ extended with period $1$, using methods unrelated to the present paper.

Our approach relies on continued fractions and Ostrowski's explicit formula for $S_N(\alpha)$, see Lemma \ref{ostrowskilemma} below. We will actually work with $S_N(r)$ with rational $r$ instead of an irrational $\alpha$, and eventually let $r$ be a suitable best rational approximation to a random $\alpha$. As the main ingredient in the proof of our limit laws, we will show that while $\max_{0 \le N<q} S_N(r)$ and $\min_{0 \le N<q} S_N(r)$ are rather complicated as functions of the variable $r \in (0,1) \cap \mathbb{Q}$, the functions
\[ h_{\infty} (r) = \max_{0 \le N <q} S_N(r) - \max_{0 \le N <q'} S_N(T^2 r) \qquad \textrm{and} \qquad h_{- \infty} (r) = \min_{0 \le N <q} S_N(r) - \min_{0 \le N <q'} S_N(T^2 r) \]
have better analytic properties in the sense that they can be extended to almost everywhere continuous functions on $[0,1]$; see Figures \ref{figure1} and \ref{figure2} below. Here $T^2$ is the second iterate of the Gauss map, and $q$ resp.\ $q'$ denotes the denominator of $r$ resp.\ $T^2 r$ in their reduced forms. This makes the functions $\max_{0 \le N <q} S_N(r)$ and $\min_{0 \le N<q} S_N(r)$ close relatives of Zagier's quantum modular forms, an observation we believe to be of independent interest.

We argue that $S_N(\alpha)$ shows a close similarity to $\tilde{S}_N(\alpha) = \sum_{n=1}^N \log |2 \sin (\pi n \alpha)|$, the Birkhoff sum with the $1$-periodic function $\log |2 \sin (\pi x)|$ having logarithmic singularities at integers. This similarity is not surprising considering that $\tilde{S}_N(\alpha)$ and $\pi S_N(\alpha)$ are the real and the imaginary part of the complex-valued Birkhoff sum $\sum_{n=1}^N \log (1-e^{2 \pi i n \alpha})$, defined with the principal branch of the logarithm. Note that $e^{\tilde{S}_N(\alpha)} = \prod_{n=1}^N |1-e^{2 \pi i n \alpha}|$ is the so-called Sudler product, a classical object in its own right introduced by Sudler \cite{SU} and Erd\H{o}s and Szekeres \cite{ESZ}. Confirming a conjecture of Zagier, in a recent paper Aistleitner and the author \cite{AB1} proved that while $\max_{0 \le N<q} \tilde{S}_N(r)$ and $\min_{0 \le N<q} \tilde{S}_N(r)$ exhibit complicated behavior, the functions
\[ \tilde{h}_{\infty} (r) = \max_{0 \le N <q} \tilde{S}_N(r) - \max_{0 \le N <q'} \tilde{S}_N(T r) \qquad \textrm{and} \qquad \tilde{h}_{- \infty} (r) = \min_{0 \le N <q} \tilde{S}_N(r) - \min_{0 \le N <q'} \tilde{S}_N(T r) \]
can be extended to almost everywhere continuous functions on $[0,1]$. The results of the present paper suggest that such behavior is more prevalent than the original scope of Zagier's continuity conjecture.

It is rather surprising that the functions $h_{\pm \infty}$ and $\tilde{h}_{\pm \infty}$ with such a pathological behavior hold the key to limit laws such as Theorem \ref{maxmintheorem}. Improving our earlier result \cite[Theorem 10]{BO}, in this paper we also prove that if $\alpha \sim \mu$ with an absolutely continuous probability measure $\mu$ on $[0,1]$, then
\begin{equation}\label{maxtildelimitlaw}
\frac{\displaystyle{\max_{0 \le N<M} \tilde{S}_N(\alpha) - \tilde{E}_M}}{\tilde{\sigma}_M} \overset{d}{\to} \mathrm{Stab}(1,1) \qquad \textrm{as } M \to \infty ,
\end{equation}
where $\tilde{E}_M=\frac{3 \mathrm{Vol}(4_1)}{\pi^3} \log M \log \log M + \tilde{D}_{\infty} \log M$ and $\tilde{\sigma}_M= \frac{3\mathrm{Vol}(4_1)}{2 \pi^2} \log M$, with
\[ \mathrm{Vol} (4_1) = 4 \pi \int_0^{5/6} \log |2 \sin (\pi x)| \, \mathrm{d}x = 2.02988\ldots \]
denoting the hyperbolic volume of the complement of the figure-eight knot (see Section \ref{quantumsection}) and some constant $\tilde{D}_{\infty} \in \mathbb{R}$. The maximum and the minimum now determine each other via the relation
\[ \max_{0 \le N <M} \tilde{S}_N(\alpha) + \min_{0 \le N<M} \tilde{S}_N(\alpha) = \log M +o(\log M) \qquad \textrm{in $\mu$-measure} , \]
which easily follows from \cite[Eq.\ (17)]{AB2}. This immediately yields a limit law for $\min_{0\le N <M} \tilde{S}_N(\alpha)$ as well, and shows that in contrast to Theorem \ref{maxmintheorem}, the joint distribution of
\[ \left( \frac{\displaystyle{\max_{0 \le N<M} \tilde{S}_N(\alpha) - \tilde{E}_M}}{\tilde{\sigma}_M}, \frac{\displaystyle{\min_{0 \le N<M} \tilde{S}_N(\alpha) + \tilde{E}_M}}{\tilde{\sigma}_M} \right) \]
converges to a probability measure supported on a straight line in $\mathbb{R}^2$ instead of a product measure. The difference in the definition of $h_{\pm \infty}$ and $\tilde{h}_{\pm \infty}$ (second vs.\ first iterate of the Gauss map) and in the joint behavior of the maximum and the minimum (asymptotically independent vs.\ asymptotically deterministic) ultimately boils down to the fact that $S_N(\alpha)$ is odd, whereas $\tilde{S}_N(\alpha)$ is even in the variable $\alpha$. See also \cite{HA,LU} for the asymptotics of $\tilde{S}_N(\alpha)$ at a.e.\ $\alpha$.

In contrast to random reals, for a badly approximable irrational $\alpha$ we have $S_N (\alpha)=O(\log N)$, and this is sharp since
\begin{equation}\label{ostrowskilimsupliminf}
\limsup_{N \to \infty} \frac{S_N(\alpha)}{\log N} >0 \qquad \textrm{and} \qquad \liminf_{N \to \infty} \frac{S_N(\alpha)}{\log N}<0,
\end{equation}
as shown by Ostrowski \cite{OS}. For a quadratic irrational $\alpha$, we can say more: general results of Schoissengeier \cite{SCH} on $S_N(\alpha)$ immediately imply that
\begin{equation}\label{quadraticmaxmin}
\max_{0\le N<M} S_N (\alpha) = C_{\infty}(\alpha) \log M +O(1) \qquad \textrm{and} \qquad \min_{0 \le N<M} S_N(\alpha) = C_{-\infty}(\alpha) \log M +O(1)
\end{equation}
with some explicitly computable constants $C_{\infty}(\alpha)>0$ and $C_{-\infty}(\alpha)<0$, and implied constants depending only on $\alpha$. Note that $C_{\infty}(\alpha)$ resp.\ $C_{-\infty}(\alpha)$ is the value of the limsup resp.\ liminf in \eqref{ostrowskilimsupliminf}. For example, we have
\[ C_{\pm \infty} (\sqrt{2}) = \pm \frac{1}{8 \log (1+\sqrt{2})}, \quad C_{\infty}(\sqrt{3}) = \frac{1}{4 \log (2+\sqrt{3})}, \quad C_{-\infty}(\sqrt{3}) = -\frac{1}{12 \log (2+\sqrt{3})} . \]

Similar results hold for $\tilde{S}_N(\alpha)$. For all badly approximable irrational $\alpha$ we have $\tilde{S}_N(\alpha)=O(\log N)$, and this is sharp since $\limsup_{N \to \infty} \tilde{S}_N(\alpha)/\log N \ge 1$ for all (not necessarily badly approximable) irrationals \cite{LU}. For a quadratic irrational $\alpha$, we similarly have \cite{AB2}
\[ \max_{0\le N<M} \tilde{S}_N (\alpha) = \tilde{C}_{\infty}(\alpha) \log M +O(1) \qquad \textrm{and} \qquad \min_{0 \le N<M} \tilde{S}_N(\alpha) = \tilde{C}_{-\infty}(\alpha) \log M +O(1) . \]
Here the constants $\tilde{C}_{\infty}(\alpha) \ge 1$ and $\tilde{C}_{-\infty}(\alpha) \le 0$ are related by $\tilde{C}_{\infty}(\alpha)+\tilde{C}_{-\infty}(\alpha)=1$, but their explicit value is known only for a few simple quadratic irrationals such as the golden mean or $\sqrt{2}$ (in both cases $\tilde{C}_{\infty}=1$ and $\tilde{C}_{-\infty}=0$). Thus, once again, the maximum and the minimum of $\tilde{S}_N(\alpha)$ determine each other, unlike those of $S_N(\alpha)$ for which the constants $C_{\infty}(\alpha)$ and $C_{-\infty}(\alpha)$ do not satisfy a simple relation. We refer to our earlier paper \cite{BO} for a central limit theorem for the joint distribution of $(S_N(\alpha), \tilde{S}_N(\alpha))$ with a fixed quadratic irrational $\alpha$ and $N \sim \mathrm{Unif}(\{ 1,2,\ldots, M \})$.

We elaborate on the connection to quantum modular forms, and state our main related results in Section \ref{quantumsection}. The main limit laws, including more general forms of Theorem \ref{maxmintheorem} and formula \eqref{maxtildelimitlaw} together with analogue results for random rationals are stated in Section \ref{limilawsection}. The proofs are given in Sections \ref{proofhpsection}, \ref{proofquadraticsection} and \ref{prooflimitlawsection}.

\section{Connections to quantum modular forms}\label{quantumsection}

A quantum modular form is a real- or complex-valued function $f$ defined on $\mathbb{P}^1 (\mathbb{Q}) = \mathbb{Q} \cup \{ \infty \}$ (except perhaps at finitely many points) which satisfies a certain approximate modularity relation under the action of $\mathrm{SL}(2,\mathbb{Z})$ with fractional linear transformations on $\mathbb{P}^1(\mathbb{Q})$. Instead of stipulating $f(\gamma r)=f(r)$ for any $\gamma \in \mathrm{SL}(2, \mathbb{Z})$ (true modularity), the functions $h_{\gamma}(r)=f(\gamma r) - f(r)$ are required, roughly speaking, to enjoy better continuity/analyticity properties than $f$ itself in the real topology on $\mathbb{P}^1(\mathbb{Q})$ (approximate modularity). Most known examples of quantum modular forms come from algebraic topology or analytic number theory.

Given a parameter $-\infty \le p \le \infty$, $p \neq 0$ and a rational number $r$ whose denominator in its reduced form is $q$, define
\[ \tilde{J}_p (r) = \left( \sum_{N=1}^{q-1} \prod_{n=1}^N |1-e^{2 \pi i n r}|^p \right)^{1/p} \qquad p \neq \pm \infty, 0, \]
and
\[ \tilde{J}_{\infty} (r) = \max_{0 \le N<q} \prod_{n=1}^N |1-e^{2 \pi i n r}|, \qquad \tilde{J}_{-\infty} (r) = \min_{0 \le N<q} \prod_{n=1}^N |1-e^{2 \pi i n r}| , \]
where $\prod_{n=1}^N |1-e^{2 \pi i n r}|$ is the Sudler product. The function $\tilde{J}_p(r)$ is $1$-periodic and even in the variable $r$, and by \cite[Proposition 2]{AB2} it also satisfies the identity $\tilde{J}_{-p}(r)=q/\tilde{J}_p(r)$.

The original motivation came from algebraic topology, as $\tilde{J}_2^2$ is (an extension of) the so-called Kashaev invariant of the figure-eight knot $4_1$. The asymptotics along the sequence of rationals $r=1/q$, $q \in \mathbb{N}$ is
\begin{equation}\label{volumeconjecture}
\log \tilde{J}_2 (1/q) = \frac{\mathrm{Vol}(4_1)}{4 \pi} q + \frac{3}{4} \log q - \frac{1}{8} \log 3 +o(1) \qquad \textrm{as } q \to \infty,
\end{equation}
where $\mathrm{Vol}(4_1)$ is the hyperbolic volume of the complement of the figure-eight knot \cite{AH}. A similar asymptotic result for the Kashaev invariant of general hyperbolic knots is known as the volume conjecture, with a full asymptotic expansion in $q$ predicted by the arithmeticity conjecture. Both conjectures have been solved for certain simple hyperbolic knots such as the figure-eight knot, but are open in general.

Calling $\tilde{J}_2$ ``the most mysterious and in many ways the most interesting'' example of a quantum modular form, Zagier \cite{ZA} formulated several conjectures about its behavior under the action of $\mathrm{SL}(2, \mathbb{Z})$ on its argument by fractional linear transformations, including a far-reaching generalization of \eqref{volumeconjecture} known as the modularity conjecture. Zagier's modularity conjecture has a more general form which applies to all hyperbolic knots, but it has only been solved for certain simple knots such as the figure-eight knot \cite{BD2}, and remains open in general. We refer to \cite{BD2} for further discussion on the arithmetic properties of quantum invariants of hyperbolic knots.

Since the fractional linear maps $r \mapsto r+1$ and $r \mapsto -1/r$ generate the full modular group, and the first of these transformations acts trivially on the argument of $\tilde{J}_p(r)$, the function
\[ \tilde{h}_p (r) = \log \frac{\tilde{J}_p(r)}{\tilde{J}_p (-1/r)}, \qquad r \in \mathbb{Q} \backslash \{0 \} \]
is the key to understanding the action of $\mathrm{SL}(2, \mathbb{Z})$. Observe that $\tilde{h}_{-p} (r) = -\tilde{h}_p(r)$, hence it is enough to consider $p>0$. Numerical evidence presented by Zagier suggests that $\tilde{h}_2$ is continuous but not differentiable at every irrational, and that it has a jump discontinuity at every rational but is smooth as we approach a rational from one side. The continuity of $\tilde{h}_2$ at all irrationals is now known as Zagier's continuity conjecture. Aistleitner and the author \cite{AB1} proved that $\tilde{h}_p$ can be extended to a function on $\mathbb{R}$ which is continuous at every irrational $\alpha=[a_0;a_1,a_2,\ldots]$ such that $\sup_{k \in \mathbb{N}}a_k = \infty$, thereby confirming Zagier's continuity conjecture almost everywhere. In the same paper it was further shown that
\begin{equation}\label{tildehpasymptotics}
\tilde{h}_p (r) = \frac{\mathrm{Vol}(4_1)}{4 \pi r} + O \left( 1+\log \frac{1}{r} \right) , \qquad r \in (0,1) \cap \mathbb{Q}
\end{equation}
with an implied constant depending only on $p$ (but it is uniform once $p$ is bounded away from $0$). Numerical experiments suggest that in fact
\[ \tilde{h}_p (r) = \frac{\mathrm{Vol}(4_1)}{4 \pi r} + \frac{p+1}{2p} \log \frac{1}{r} + O (1) , \qquad r \in (0,1) \cap \mathbb{Q} . \]
Note that in \cite{AB1} these results were stated only for $p=2$, but the proof works mutatis mutandis for all $0< p \le \infty$.

In this paper, we interpret $\tilde{J}_p$ as a natural quantity related to the Birkhoff sum $\tilde{S}_N (r) = \sum_{n=1}^N \log |2 \sin (\pi nr)|$, and $\tilde{h}_p$ as the key to understanding the action of the Gauss map $T$ on the argument of $\tilde{J}_p$. Recall that $T: [0,1) \to [0,1)$ is defined as $Tx=\{ 1/x \}$, $x \neq 0$ and $T0=0$, thus $\tilde{h}_p(r)=\log (\tilde{J}_p(r)/\tilde{J}_p (Tr))$. We show that the Birkhoff sum $S_N(r) = \sum_{n=1}^N (\{ n r \} -1/2)$ yields a function $J_p(r)$ which exhibits remarkable similarity to $\tilde{J}_p(r)$, thus demonstrating that quantum modular behavior can also naturally arise in ergodic theory. It would be very interesting to find further examples of Birkhoff sums, either for circle rotations or more general dynamical systems, with a similarly rich arithmetic structure.

Given a parameter $-\infty \le p \le \infty$, $p \neq 0$ and a rational number $r$ whose denominator in its reduced form is $q$, we thus define
\[ J_p (r) = \left( \sum_{N=0}^{q-1} e^{p S_N (r)} \right)^{1/p}, \qquad p \neq \pm \infty, 0, \]
and
\[ J_{\infty} (r) = \max_{0 \le N<q} e^{S_N (r)}, \qquad J_{-\infty} (r) = \min_{0 \le N<q} e^{S_N(r)} . \]
Note that these are perfect analogues of $\tilde{J}_p(r)$ with $S_N(r)$ playing the role of $\tilde{S}_N(r)$. Using the fact that $S_N(r)$ is $1$-periodic and odd in the variable $r$, we immediately observe the identities $J_p(r+1)=J_p(r)$ and $J_{-p}(r)=1/J_p (-r)$. In order to reveal the arithmetic structure of $J_p$, we introduce the function
\[ h_p(r) = \log \frac{J_p(r)}{J_p (T^2 r)}, \qquad r \in [0,1) \cap \mathbb{Q} , \]
where $T^2$ is the second iterate of the Gauss map.

\begin{figure}[ht]
\begin{center}
\includegraphics[width=0.6 \linewidth]{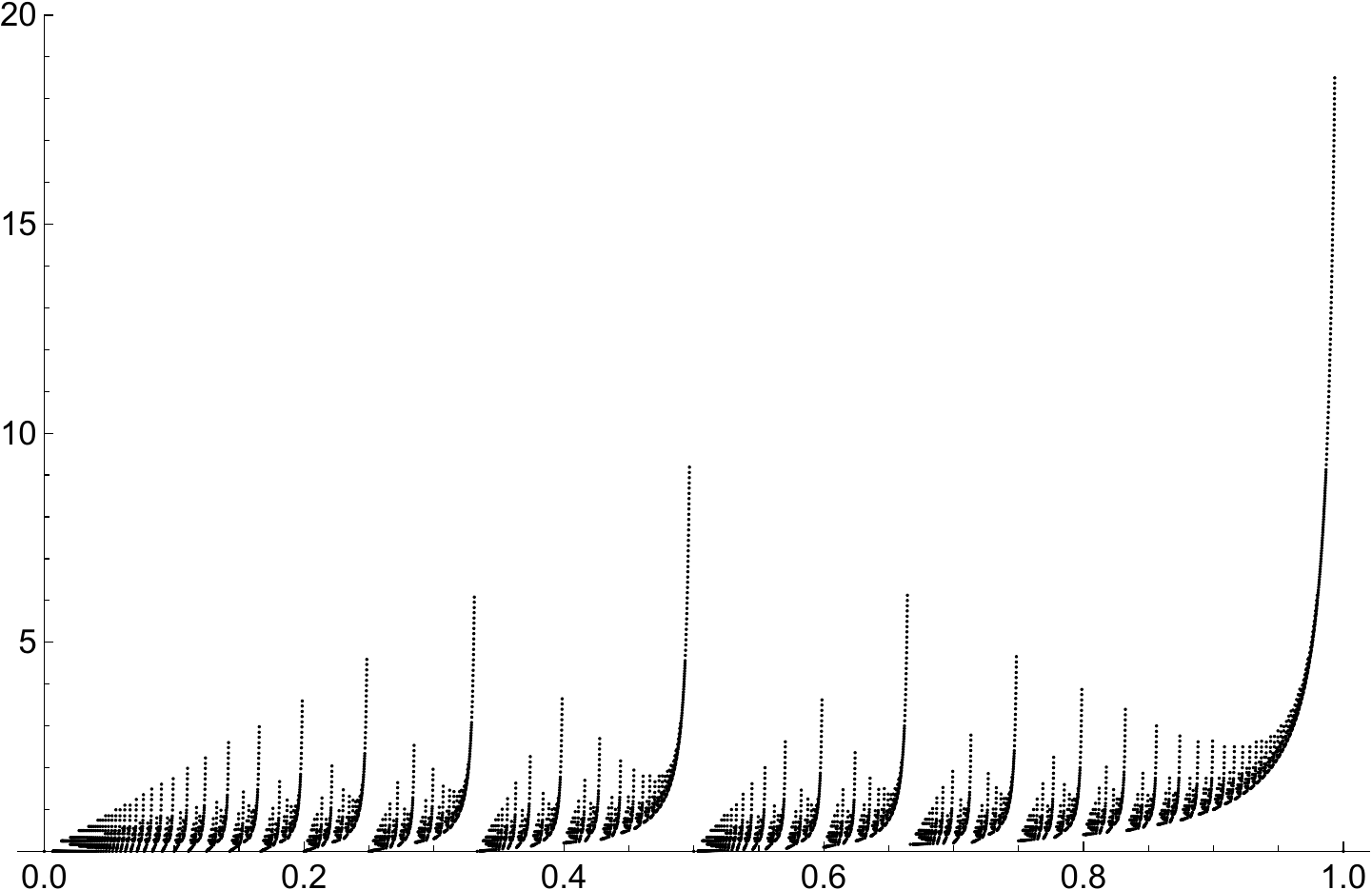}
\end{center}
\caption{The function $\log J_{\infty}(r)=\max_{0\le N<q} S_N(r)$ evaluated at all reduced rationals in $[0,1]$ with denominator at most $150$. The graph of $\log J_p(r)$ with $0<p<\infty$ looks very similar, whereas the graph of $\log J_{-p}(r)=-\log J_p(-r)$ is obtained by reflections.}
\label{figure1}
\end{figure}

The analogue of \eqref{volumeconjecture} for $J_p$ is completely straightforward. Indeed, for $r=1/q$, $q \in \mathbb{N}$, we have
\[ S_N(1/q) = \sum_{n=1}^N \left( \frac{n}{q} - \frac{1}{2} \right) = \frac{N(N+1-q)}{2q} , \qquad 0 \le N<q, \]
and it is an easy exercise to show that (cf.\ Lemma \ref{aplemma} below)
\begin{equation}\label{1/q}
h_p (1/q) = \log J_p (1/q) = \left\{ \begin{array}{ll} \frac{1}{p} \log \frac{2}{1-e^{-p/2}} +o(1) & \textrm{if } 0<p<\infty, \\ - \frac{q}{8} + \frac{1}{2p} \log \frac{2 \pi q}{|p|} + \frac{1}{4} +o(1) & \textrm{if } -\infty < p <0 \end{array} \right. \qquad \textrm{as } q \to \infty .
\end{equation}
Since $S_N(1/q)$, $0 \le N<q$ attains its maximum at $N=0,q-1$ and its minimum at $N=\lfloor \frac{q-1}{2} \rfloor, \lceil \frac{q-1}{2} \rceil$, for $p=\pm \infty$ we even have the explicit formulas
\[ h_{\infty} (1/q) = \log J_{\infty} (1/q) = 0 \qquad \textrm{and} \qquad h_{-\infty} (1/q) = \log J_{-\infty} (1/q) = - \frac{q}{8} + \frac{1}{4} - \frac{1}{8q} \mathds{1}_{\{ q \textrm{ odd} \}} . \]

As a direct analogue of \eqref{tildehpasymptotics}, we establish a far-reaching generalization of the asymptotics \eqref{1/q} to general rationals.
\begin{thm}\label{asymptoticstheorem} For any $-\infty \le p \le \infty$, $p \neq 0$ and any $r \in (0,1) \cap \mathbb{Q}$,
\[ h_p (r) = \left\{ \begin{array}{ll} \mathds{1}_{\{ Tr \neq 0 \}} \left( \frac{1}{8 Tr} + \frac{1}{2p} \log \frac{1}{Tr} \right) + O \left( \max \{ 1, \frac{1}{p} \log \frac{1}{p} \} \right) & \textrm{if } p>0, \\ - \frac{1}{8r} + \frac{1}{2p} \log \frac{1}{r} + O \left( \max \{ 1, \frac{1}{|p|} \log \frac{1}{|p|} \} \right) & \textrm{if } p<0 \end{array} \right. \]
with a universal implied constant.
\end{thm}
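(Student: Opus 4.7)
The plan is to use Ostrowski's explicit formula for $S_N(r)$ (Lemma \ref{ostrowskilemma}) together with the Ostrowski numeration system attached to the convergents of $r$. Writing $r = [0;a_1,\ldots,a_k]$ with convergent denominators $q_0 = 1 < q_1 = a_1 < \cdots < q_k = q$, every integer $N \in [0,q)$ has a unique admissible expansion $N = \sum_{j=0}^{k-1} c_{j+1} q_j$. Since $T^2 r = [0;a_3,\ldots,a_k]$ is obtained by removing the first two partial quotients, the tail digits $(c_3,\ldots,c_k)$ of the expansion of $N$ can be identified, up to a boundary adjustment at the junction constraint $c_3 = a_3 \Rightarrow c_2 = 0$, with the Ostrowski digits with respect to $T^2 r$ of a unique integer $N' \in [0,q')$. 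This yields a nearly bijective correspondence $N \leftrightarrow (c_1,c_2,N')$.

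Under this correspondence, Ostrowski's formula decomposes
\[ S_N(r) = P(c_1,c_2;r) + S_{N'}(T^2 r) + E, \]
where the leading contribution $P$ depends only on the first two Ostrowski digits of $N$, generalising the explicit formula $S_N(1/q) = N(N+1-q)/(2q)$ appearing before \eqref{1/q}, and $E$ is a uniformly bounded error. Substituting into $J_p(r)^p = \sum_N e^{p S_N(r)}$ and collecting the leading-digit factor,
\[ J_p(r)^p = \Bigl( \sum_{c_1,c_2} e^{p P(c_1,c_2;r)} \Bigr) J_p(T^2 r)^p \, (1 + O(\mathrm{err})), \]
so that $h_p(r) = (1/p)\log \sum_{c_1,c_2} e^{p P(c_1,c_2;r)} + O(\mathrm{err}/|p|)$. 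This reduces the problem to a two-variable Laplace-type estimate.

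The inner double sum is then evaluated by a Laplace/Gaussian method, exploiting that $P$ is locally quadratic near its extrema. The minimum of $P$ over the admissible rectangle sits in its interior and equals $-1/(8r) + O(1)$, generalising the $r = 1/q$ computation, while the maximum is attained on the boundary and equals $1/(8Tr) + O(1)$. For $p>0$ the sum $\sum e^{p P}$ concentrates at the maximum, and the local quadratic curvature contributes a Gaussian factor whose $\log$, divided by $p$, produces the $\frac{1}{2p}\log(1/Tr)$ correction; the indicator $\mathds{1}_{\{Tr \neq 0\}}$ reflects the degenerate case $r = 1/q$ (i.e.\ $k=1$, second partial quotient absent), in which the decomposition collapses and one falls back on the direct evaluation \eqref{1/q}. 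For $p<0$ a symmetric argument at the interior minimum yields $-1/(8r) + \frac{1}{2p}\log(1/r)$. The extreme cases $p = \pm\infty$ reduce to identifying the exact argmax/argmin of $S_N(r)$ in Ostrowski coordinates directly from the structure of $P$, with no Gaussian factor.

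The main obstacle will be obtaining error estimates uniform over all $r \in (0,1)\cap\mathbb{Q}$ and the whole range of $p$. The $\log(1/|p|)/|p|$ degradation as $p \to 0$ stems from $(1/p)\log J_p(r)^p \sim (1/p)\log q$ becoming an indeterminate form; extracting the correct first-order correction demands a second-order expansion of $\log(1+x)$ together with sharp moment bounds on $S_N(r)$. In the opposite regime, for very small $Tr$ (equivalently large $a_2$), the principal term $1/(8Tr)$ is itself large, so the argument must show that the error remains additive and does not accidentally inherit a factor of $a_2$ from the normalisation of $P$. Finally, absorbing the junction admissibility constraint into the error and handling the edge cases $k \le 2$ (where $T^2 r$ is trivial) requires careful but routine bookkeeping.
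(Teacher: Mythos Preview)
Your proposal is correct and follows essentially the same approach as the paper: split each $N$ into its first two Ostrowski digits and a tail matched bijectively to an $N'$ for $T^2 r$ (the paper's Lemma~\ref{matchinglemma}), reduce $h_p(r)$ to $(1/p)\log$ of a separable double sum $\sum_{c_1,c_2} e^{pP(c_1,c_2)}$ with $P(c_1,c_2)=-\tfrac{c_1}{2}(1-c_1/a_1)+\tfrac{c_2}{2}(1-c_2/a_2)+O(1)$, handle the junction constraint $c_3=a_3$ by a one-digit perturbation estimate, and evaluate each single-variable factor by a Laplace estimate (the paper's Lemma~\ref{aplemma}). One minor slip: the minimum of $P$ is not in the interior of the rectangle but on its boundary in the $c_2$-direction (at $c_2=0$, with $c_1\approx a_1/2$), which is exactly why only the $c_1$-Gaussian contributes the $(1/(2p))\log a_1\approx(1/(2p))\log(1/r)$ correction while the $c_2$-sum stays $O(1)$ after dividing by $p$; the situation for $p>0$ is symmetric with the roles of $c_1$ and $c_2$ swapped.
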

\noindent We can express Theorem \ref{asymptoticstheorem} in terms of the continued fraction expansion $r=[0;a_1,a_2,\ldots, a_L]$ of $r \in (0,1) \cap \mathbb{Q}$ as
\[ h_p (r) = \left\{ \begin{array}{ll} \frac{a_2}{8} + \frac{1}{2p} \log a_2 + O \left( \max \{ 1, \frac{1}{p} \log \frac{1}{p} \} \right) & \textrm{if } p>0, \\ - \frac{a_1}{8} + \frac{1}{2p} \log a_1 + O \left( \max \{ 1, \frac{1}{|p|} \log \frac{1}{|p|} \} \right) & \textrm{if } p<0 . \end{array} \right.  \]
\begin{remark} In all our results, it does not matter which of the two possible continued fraction expansions we choose for a rational number. In particular, to avoid the tedious case distinction between the length of the continued fraction being $L=1$ or $L \ge 2$, we consider the second partial quotient of $r=1/q=[0;q]=[0;q-1,1]$ (when $Tr=0$) to be well defined as $a_2=1$.
\end{remark}

\begin{figure}[ht]
\begin{center}
\begin{subfigure}{0.47\textwidth}
\includegraphics[width=1 \linewidth]{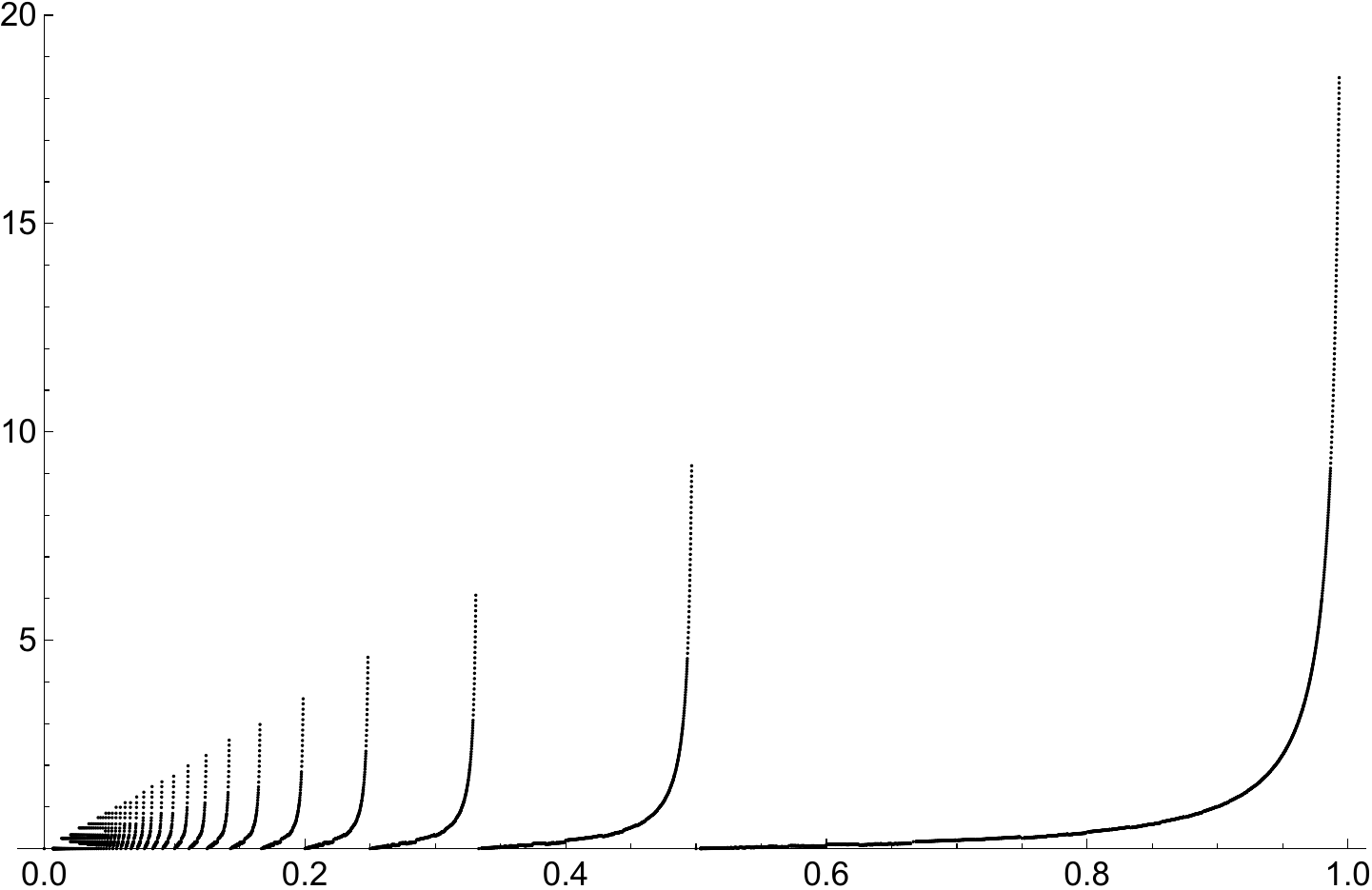}
\caption{$h_{\infty}(r)$}
\end{subfigure}
\hspace{5mm}
\begin{subfigure}{0.47\textwidth}
\includegraphics[width=1 \linewidth]{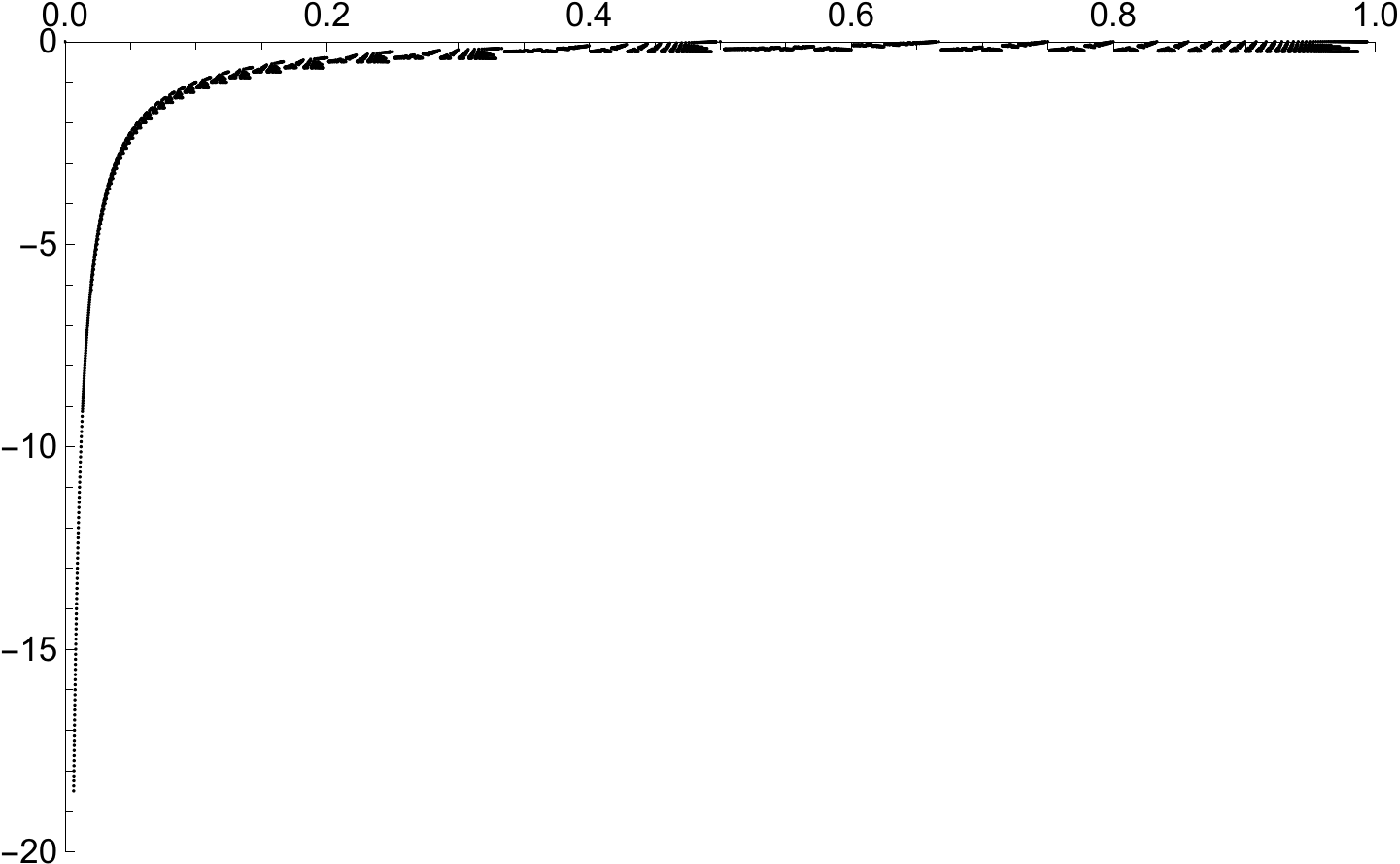}
\caption{$h_{-\infty}(r)$}
\end{subfigure}
\end{center}
\caption{The functions $h_{\pm \infty}(r)$ evaluated at all reduced rationals in $[0,1)$ with denominator at most $150$. The asymptotics $1/(8Tr)$ resp.\ $-1/(8r)$ in Theorem \ref{asymptoticstheorem} give a close fit to the graphs.}
\label{figure2}
\end{figure}

Our next result concerns the continuity of $h_p$ at irrationals, as an analogue of Zagier's continuity conjecture. For the sake of readability, from now on we use the notation
\begin{equation}\label{varepsilonp}
\varepsilon_p = \left\{ \begin{array}{ll} 2 & \textrm{if } p>0, \\ 1 & \textrm{if } p<0 . \end{array} \right.
\end{equation}
\begin{thm}\label{continuitytheorem} Let $-\infty \le p \le \infty$, $p \neq 0$, and let $\alpha \in (0,1)$ be an irrational whose continued fraction expansion $\alpha = [0;a_1,a_2,\ldots]$ satisfies $\sup_{k \in \mathbb{N}} a_{2k+\varepsilon_p}= \infty$. Then $\lim_{r \to \alpha} h_p(r)$ exists and is finite. In particular, $h_p$ can be extended to a function on $[0,1]$ which is continuous at every irrational $\alpha$ which satisfies $\sup_{k \in \mathbb{N}} a_{2k+\varepsilon_p}= \infty$.
\end{thm}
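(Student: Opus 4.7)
The plan is to derive a decomposition of $\log J_p(r)$ that exposes its dependence on the leading two partial quotients of $r$ separately from the tail, and then to iterate this decomposition along the $T^2$-orbit. The starting point is Ostrowski's explicit formula (Lemma \ref{ostrowskilemma}), which expresses $S_N(r)$ as a sum over the Ostrowski digits $c_1, \ldots, c_L$ of $N$. By grouping the sum
\[ J_p(r)^p = \sum_{N=0}^{q-1} e^{p S_N(r)} \]
according to its Ostrowski digits and summing out the two leading digits $c_1, c_2$ exactly, I aim to establish an identity of the form
\[ h_p(r) = \Phi_p\bigl(a_1(r), a_2(r), T^2 r\bigr) + E_p(r), \]
where $\Phi_p$ is an explicit expression continuous in its third argument (as a real variable), and $E_p(r)$ is a remainder that controllably shrinks under further $T^2$-iteration.

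The motivation is that for rationals $r_n \to \alpha$ with $\alpha = [0;a_1,a_2,\ldots]$ irrational, the first partial quotients of $r_n$ eventually equal those of $\alpha$, and $T^2 r_n \to T^2 \alpha$, so the main term $\Phi_p(a_1, a_2, T^2 r_n)$ converges. What remains is to show $E_p(r_n) \to 0$. Iterating the fundamental identity yields
\[ E_p(r) = \sum_{k=0}^{K-1} \bigl[\text{a shift of } E_p \text{ at level } k\bigr] + E_p(T^{2K} r), \]
and combining with Theorem \ref{asymptoticstheorem} applied at $T^{2k} r$ should show that each summand is controlled by the reciprocal of the relevant partial quotient $a_{2k+\varepsilon_p}(r)$. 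The hypothesis $\sup_k a_{2k+\varepsilon_p}(\alpha) = \infty$ then ensures that for any prescribed precision there exists $K$ (matching $\alpha$'s continued fraction up to a large partial quotient) along which the remainder is uniformly small for all $r_n$ sufficiently close to $\alpha$.

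The choice of the \emph{second} iterate of the Gauss map and the parity convention $\varepsilon_p$ are forced by Theorem \ref{asymptoticstheorem}: the leading asymptotic depends on a single partial quotient, namely $a_2$ when $p>0$ and $a_1$ when $p<0$. Defining $h_p$ via $T^2$ rather than $T$ ensures that $h_p$ is sensitive to precisely one pair of consecutive partial quotients per iteration, which is the feature that makes the iterative remainder estimate converge. The symmetry $J_{-p}(r) = 1/J_p(-r)$ together with the oddness of $S_N(r)$ in $r$ shifts the relevant index by one when the sign of $p$ flips, which is exactly the parity jump encoded in \eqref{varepsilonp}.

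The main obstacle is proving the fundamental decomposition with an $E_p$ that is genuinely controllable rather than merely bounded. Theorem \ref{asymptoticstheorem} identifies only the leading term up to an $O(1)$ error, whereas Theorem \ref{continuitytheorem} needs a remainder that contracts under $T^2$-iteration. Extracting this from the Ostrowski digit sum requires careful accounting of the carry constraint $c_j = a_j \Rightarrow c_{j-1} = 0$ across the $T^2$-shift, and treating the boundary digit block $(c_1, c_2)$ exactly rather than estimating it, in particular isolating the contributions of $c_2 = 0$ and $c_2 = a_2$ that mediate the passage between the $r$ and $T^2 r$ representations. This is the analogue for $J_p$ of the approximate modularity analysis carried out for $\tilde{J}_p$ in \cite{AB1}, adapted to the odd symmetry of $S_N$ in $r$.
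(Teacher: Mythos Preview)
Your proposal has a genuine gap: the iterative scheme you sketch does not cohere, and the error control you hope to extract from Theorem \ref{asymptoticstheorem} is not available there. Concretely, Theorem \ref{asymptoticstheorem} gives $h_p(r) = \mathrm{sgn}(p)\,a_{\varepsilon_p}/8 + (2p)^{-1}\log a_{\varepsilon_p} + O(1)$, and an $O(1)$ remainder is useless for continuity. You claim that iterating a decomposition $h_p(r) = \Phi_p(a_1,a_2,T^2 r) + E_p(r)$ yields $E_p(r) = \sum_{k<K}(\text{shift of }E_p) + E_p(T^{2K}r)$, but applying your identity at $T^{2k}r$ produces information about $h_p(T^{2k}r)$, not about $E_p(r)$; summing over $k$ gives an expression for $\log J_p(r) - \log J_p(T^{2K}r)$, not for $h_p(r)$ or its remainder. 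Even granting a series representation with terms of size $O(1/a_{2k+\varepsilon_p})$, the hypothesis $\sup_k a_{2k+\varepsilon_p} = \infty$ does not imply $\sum_k 1/a_{2k+\varepsilon_p} < \infty$, so such a series need not converge.

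The paper does not iterate. Instead it picks a \emph{single} index $k$ with $k+1 \equiv \varepsilon_p \pmod 2$ and $a_{k+1}$ large, and proves a factorization (Lemma \ref{factorlemma}): when $a_{k+1}$ is large, $\big(\sum_{N<q} e^{pS_N(r)}\big)^{1/p}$ splits, up to a multiplicative error $1+O\big(\sqrt{(\log a_{k+1})/a_{k+1}}\big)$, into a factor depending only on $p_k/q_k$ and a factor supported on $N$ with $b_0(N)=\cdots=b_{k-1}(N)=0$. The mechanism is a localization lemma (Lemma \ref{localcorollary}) showing the sum is dominated by $N$ with $b_k(N)\approx a_{k+1}/2$, followed by the near-additivity $S_N = S_{N_1}+S_{N_2}+O(\cdot)$ of Lemma \ref{SNlemma}. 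Applying this factorization to both $r$ and $T^2 r$ and cancelling the second factors via the matching map of Lemma \ref{matchinglemma} yields $h_p(r) = Z_{p,k}(\alpha) + O\big(a_1 a_2/q_k + \sqrt{(\log a_{k+1})/a_{k+1}}\big)$ uniformly for $r$ sharing the first $k+1$ partial quotients with $\alpha$, where $Z_{p,k}(\alpha)$ depends only on those quotients. The Cauchy criterion then gives the limit whenever $\liminf$ of the error along the correct parity is zero, which is exactly the hypothesis $\sup_k a_{2k+\varepsilon_p}=\infty$. The missing idea in your proposal is this one-shot factorization at a single large partial quotient; your ``exact'' treatment of the boundary digits $(c_1,c_2)$ is a step toward the matching lemma, but without the localization and factorization lemmas it cannot produce an error that shrinks.
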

\noindent Recall that Lebesgue-a.e.\ $\alpha$ satisfies $\sup_{k \in \mathbb{N}} a_{2k}=\infty$ and $\sup_{k \in \mathbb{N}} a_{2k+1}=\infty$. In particular, the extension of $h_p$ is a.e.\ continuous. We conjecture that the condition $\sup_{k \in \mathbb{N}} a_{2k+\varepsilon_p}=\infty$ can be removed, so that Theorem \ref{continuitytheorem} holds for all (including badly approximable) irrationals.

In contrast, $h_p$ has a different behavior at rational numbers. The left-hand limit for $p>0$, and the right-hand limit for $p<0$ exist and are finite at all rationals, and their values are explicitly computable.
\begin{thm}\label{onesidedlimittheorem} Let $a/q \in (0,1)$ and $a'/q'=T^2 (a/q) \in [0,1)$ be reduced rationals, and set
\[ W_p (a/q) = \frac{1}{p} \log \frac{\sum_{N=0}^{q-1} e^{p(S_N(a/q)-\mathrm{sgn}(p)N/(2q))}}{\sum_{N=0}^{q'-1} e^{p(S_N(a'/q')-\mathrm{sgn}(p)N/(2q'))}} + \frac{\lfloor q/a \rfloor (\mathrm{sgn}(p) 2a-1)}{8qq'}, \qquad p \neq \pm \infty,0, \]
and
\[ \begin{split} W_{\infty}(a/q) &= \max_{0 \le N<q} \left( S_N(a/q)- \frac{N}{2q} \right) - \max_{0 \le N<q'} \left( S_N(a'/q')- \frac{N}{2q'} \right) + \frac{\lfloor q/a \rfloor (2a-1)}{8qq'}, \\ W_{-\infty}(a/q) &= \min_{0 \le N<q} \left( S_N(a/q)+ \frac{N}{2q} \right) - \min_{0 \le N<q'} \left( S_N(a'/q')+ \frac{N}{2q'} \right) + \frac{\lfloor q/a \rfloor (-2a-1)}{8qq'}. \end{split} \]
\begin{enumerate}
\item[(i)] If $-\infty \le p<0$, then $\displaystyle{\lim_{r \to (a/q)^+} h_p(r)= W_p(a/q)}$.
\item[(ii)] If $0<p\le \infty$ and $a \neq 1$, then $\displaystyle{\lim_{r \to (a/q)^-} h_p(r)= W_p(a/q)}$.
\end{enumerate}
\end{thm}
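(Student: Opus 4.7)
The plan is to apply Ostrowski's explicit formula (Lemma~\ref{ostrowskilemma}) to $S_N(r)$ for $r$ close to $a/q$ and to compute the ratio defining $h_p$ via an explicit block decomposition. I describe part~(ii) with $0<p\le\infty$ and $a\ne 1$; part~(i) is symmetric, and the passage from finite $p$ to $p=\pm\infty$ is routine since $(1/p)\log\sum e^{pX}$ converges to $\max X$ as $p\to\infty$. Write $a/q=[0;a_1,\ldots,a_L]$ with $a_1=\lfloor q/a\rfloor$. A rational $r$ tending to $a/q$ from below has the form $r=[0;a_1,\ldots,a_L,m,c_1,\ldots,c_K]$ with $m\to\infty$, so that $T^2 r=[0;a_3,\ldots,a_L,m,c_1,\ldots,c_K]$ tends to $a'/q'$ from the analogous side. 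Setting $\epsilon=a/q-r>0$, one has $\epsilon$ of order $1/(mq)$, while the denominators of $r$ and $T^2 r$ are $d=mq+q^{*}$ and $d'=mq'+(q^{*})'$ for bounded correction terms.

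For $0\le N<d$, decompose $N=kq+s$ with $0\le s<q$ and $0\le k\le m$. Using $kqr=ka-kq\epsilon$ together with a three-distance-theorem analysis of the fractional-part wrap-arounds yields an exact formula
$$ S_{kq+s}(r)=k\,S_q(r)+S_s(a/q)-kq\epsilon\,s+\eta(k,s;r), $$
where $S_q(r)=1/2-\epsilon q(q+1)/2\to 1/2$ (the characteristic left-approach drift, coming from $\{qr\}=1-q\epsilon\to 1^-$) and $\eta(k,s;r)$ is a bounded discrete correction from the wrap-arounds. Substituting into $\sum_{N=0}^{d-1}e^{pS_N(r)}$ and performing the sum over $k$ converts the linear-in-$s$ term $-kq\epsilon s$ into a uniform shift $-s/(2q)$ in the exponent, which is precisely the $-\mathrm{sgn}(p)N/(2q)$ adjustment appearing inside $W_p$. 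The same decomposition applied to $J_p(T^2 r)$ with $q,a$ replaced by $q',a'$ produces the analogous expression, and taking the ratio causes the dominant $m$-dependent geometric factors in $k$ to cancel.

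What survives in the limit is exactly the logarithmic ratio of $s$-sums in the definition of $W_p(a/q)$, plus an $O(1)$ contribution from the incomplete final blocks of lengths $q^{*}$ and $(q^{*})'$. Evaluating these boundary terms via Ostrowski's formula on the $a_1$ successive Ostrowski-$q$ digits contained in the tail gives the explicit correction $\lfloor q/a\rfloor(\mathrm{sgn}(p)\,2a-1)/(8qq')$: here $a_1=\lfloor q/a\rfloor$ counts the $q$-length Ostrowski steps in the tail, and each step contributes an average drift of $(\mathrm{sgn}(p)\,2a-1)/(8qq')$ to the difference $S_N(r)-S_N(T^2 r)$. The principal technical obstacle is the uniform control of $\eta(k,s;r)$ as $k$ grows toward $m$: there the shift $kq\epsilon$ becomes comparable to the spacing $1/q$ between the values $\{na/q\}$, so the wrap-arounds become frequent and their cumulative effect must be shown to produce exactly the $\lfloor q/a\rfloor$ factor. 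The exclusion $a=1$ in part~(ii) reflects the degeneracy $T(1/q)=0$: approaching rationals $r=[0;q,m,c_1,\ldots,c_K]$ have $T^2 r=[0;c_1,\ldots,c_K]$ depending only on the free tail, so no one-sided limit can exist in that case.
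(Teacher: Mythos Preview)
Your block decomposition $N=kq+s$ is a genuinely different route from the paper's argument, which instead applies the factorization Lemma~\ref{factorlemma} at the large partial quotient $a_{s+1}=m$ and then compares the two restricted sums via the matching Lemma~\ref{matchinglemma}. Your approach could be made to work, but as written it contains a concrete error that invalidates the key step.

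The formula $S_{kq+s}(r)=k\,S_q(r)+S_s(a/q)-kq\epsilon s+\eta(k,s;r)$ with $\eta$ bounded is false: it is missing a quadratic term in $k$. A direct computation (no wrap-arounds occur for $k\le m$ since $kq\epsilon<1/q$ throughout that range) gives
\[
S_{kq}(r)=\frac{k}{2}-\frac{\epsilon q^2 k(k-1)}{2}+O(k\epsilon q)=\frac{k}{2}-\frac{k^2}{2m}+O(1),
\]
whereas your expression $k\,S_q(r)=k(1/2-\epsilon q(q+1)/2)$ repeats only the first increment and so omits the $-k^2/(2m)$ drift. This term is of order $m$, not bounded, and it is precisely what localises the dominant contribution at $k\approx m/2$ rather than at $k\approx m$. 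Once you include it, the exponent $k/2-k^2/(2m)-kq\epsilon s$ peaks at $k^*=m/2-s/q+O(1)$ with peak value $m/8-s/(2q)+O(1/m)$, and \emph{that} is where the shift $-s/(2q)$ genuinely comes from --- not from ``performing the sum over $k$'' on a linear exponent. With your stated formula the peak would sit at $k=m$ and produce the wrong shift $-s/q$. In the paper's language this localisation at $m/2$ is exactly Lemma~\ref{localoptimumlemma}/Lemma~\ref{localcorollary}, and the cross-term $(-1)^s b_s(N)\|q_s r\|N_1\approx -N_1/(2q)$ in Lemma~\ref{SNlemma} is the same $-s/(2q)$ seen from the Ostrowski side.

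Two smaller points. The correction $\lfloor q/a\rfloor(\mathrm{sgn}(p)2a-1)/(8qq')$ does not arise from ``incomplete final blocks'': in the paper it comes from the second claim of Lemma~\ref{matchinglemma}, which evaluates $S_N(r)-S_{N'}(T^2 r)$ on the already-localised range $b_s(N)\approx m/2$; in your framework it would be the $O(1)$ difference between the peak values $m/8$ computed for $r$ and for $T^2 r$. And the exclusion $a=1$ is not because ``no one-sided limit can exist'': the limit does exist and equals $+\infty$, since $Tr\to 0$ forces $h_p(r)\sim 1/(8Tr)\to\infty$ by Theorem~\ref{asymptoticstheorem}.
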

\noindent Note that we excluded the rationals $1/q$ for $p>0$. Since $Tr \to \infty$ as $r \to (1/q)^-$, Theorem \ref{asymptoticstheorem} implies that in this case $\lim_{r \to (1/q)^-} h_p(r)=\infty$. As for approaching a rational point from the opposite side, numerical experiments suggest that $h_p$ is right-continuous for $0<p \le \infty$, and left-continuous for $-\infty \le p<0$ at all rationals not of the form $1/q$.

\begin{figure}[ht]
\begin{center}
\begin{subfigure}{0.47\textwidth}
\includegraphics[width=1 \linewidth]{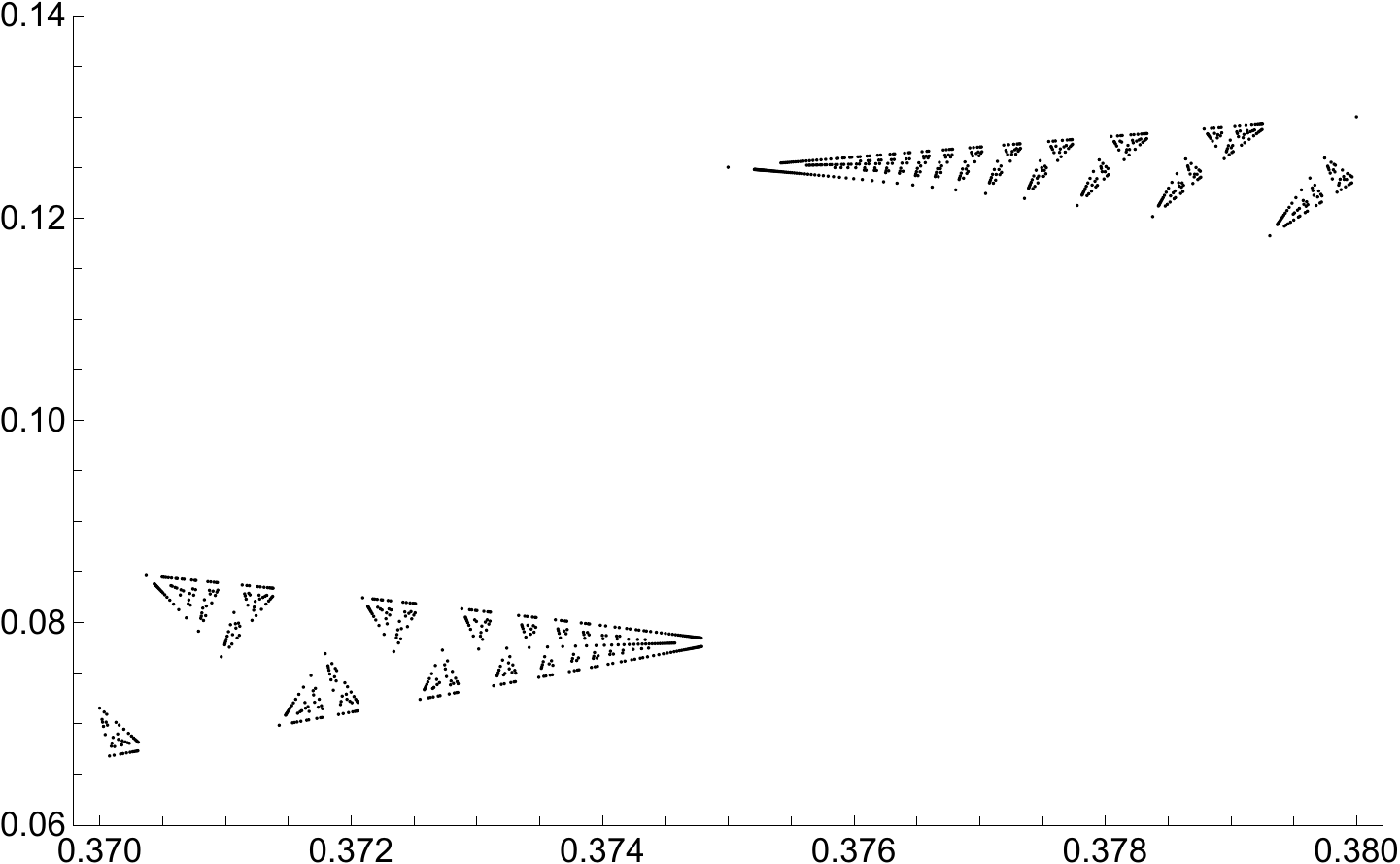}
\caption{$h_{\infty}(r)$}
\end{subfigure}
\hspace{5mm}
\begin{subfigure}{0.47\textwidth}
\includegraphics[width=1 \linewidth]{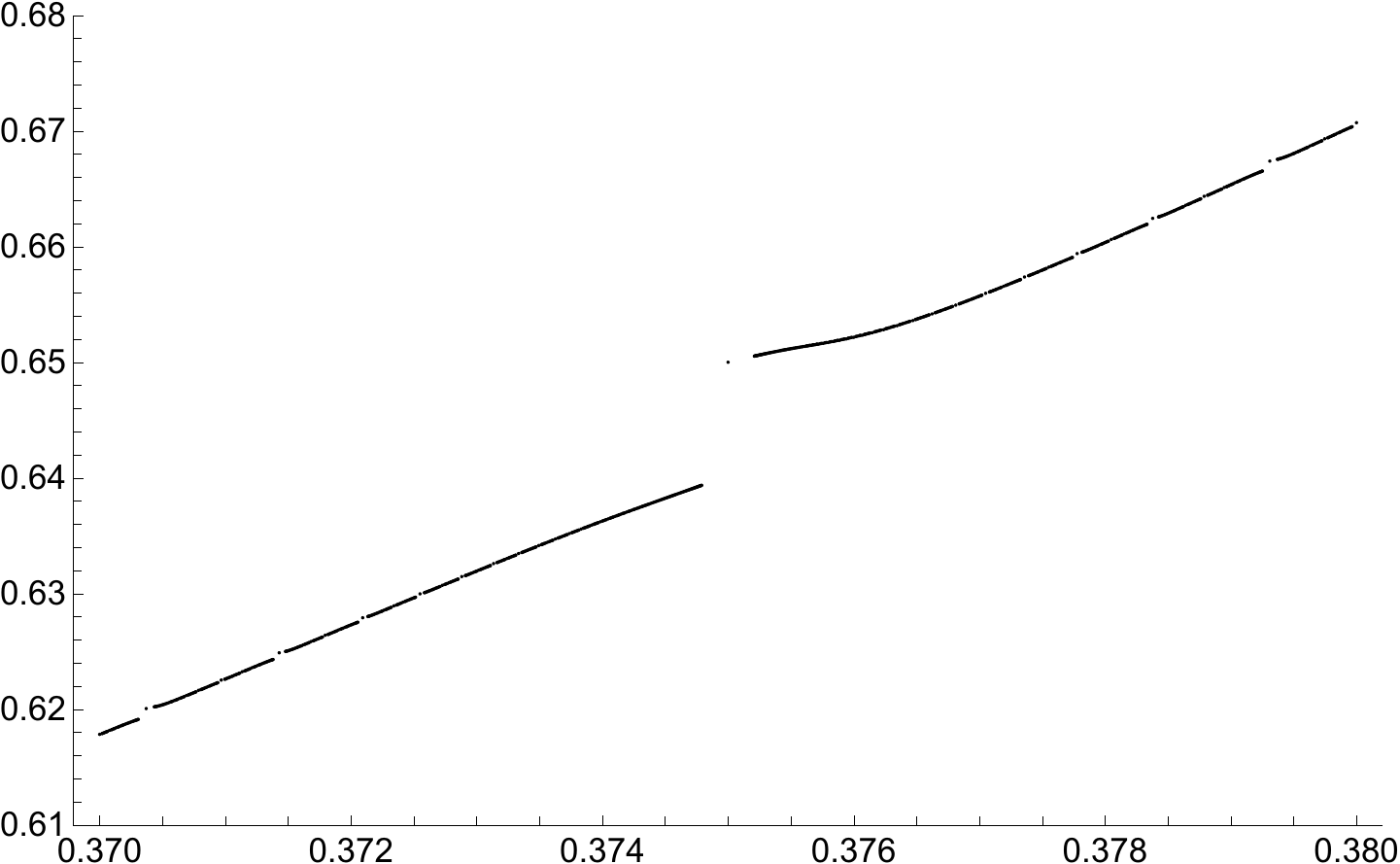}
\caption{$h_2(r)$}
\end{subfigure}
\end{center}
\caption{The functions $h_{\infty}(r)$ and $h_2(r)$ evaluated at all reduced rationals in the interval $[0.37,0.38]$ with denominator at most $600$. At the point $3/8=0.375$ the values are $h_{\infty}(3/8)=1/8$ and $h_2(3/8)=0.650008\ldots$. By Theorem \ref{onesidedlimittheorem}, the left-hand limits at $3/8$ are $W_{\infty}(3/8)=5/64=0.078125$ and $W_2(3/8)=0.640180\ldots$. The graphs suggest right-continuity at $3/8$.}
\end{figure}

In addition to the pathological limit behavior (continuity at irrationals but jumps at rationals), the functions $h_p$ also seem to have a clear self-similar structure, which becomes visible after subtracting the asymptotics established in Theorem \ref{asymptoticstheorem}. A self-similar structure of $\tilde{h}_p$ was numerically observed in \cite{AB1,BD2}. It would be very interesting to actually prove self-similarity, and to gain a deeper understanding of the functions $h_p$ and $\tilde{h}_p$.

\begin{figure}[ht]
\begin{center}
\begin{subfigure}{0.47\textwidth}
\includegraphics[width=1 \linewidth]{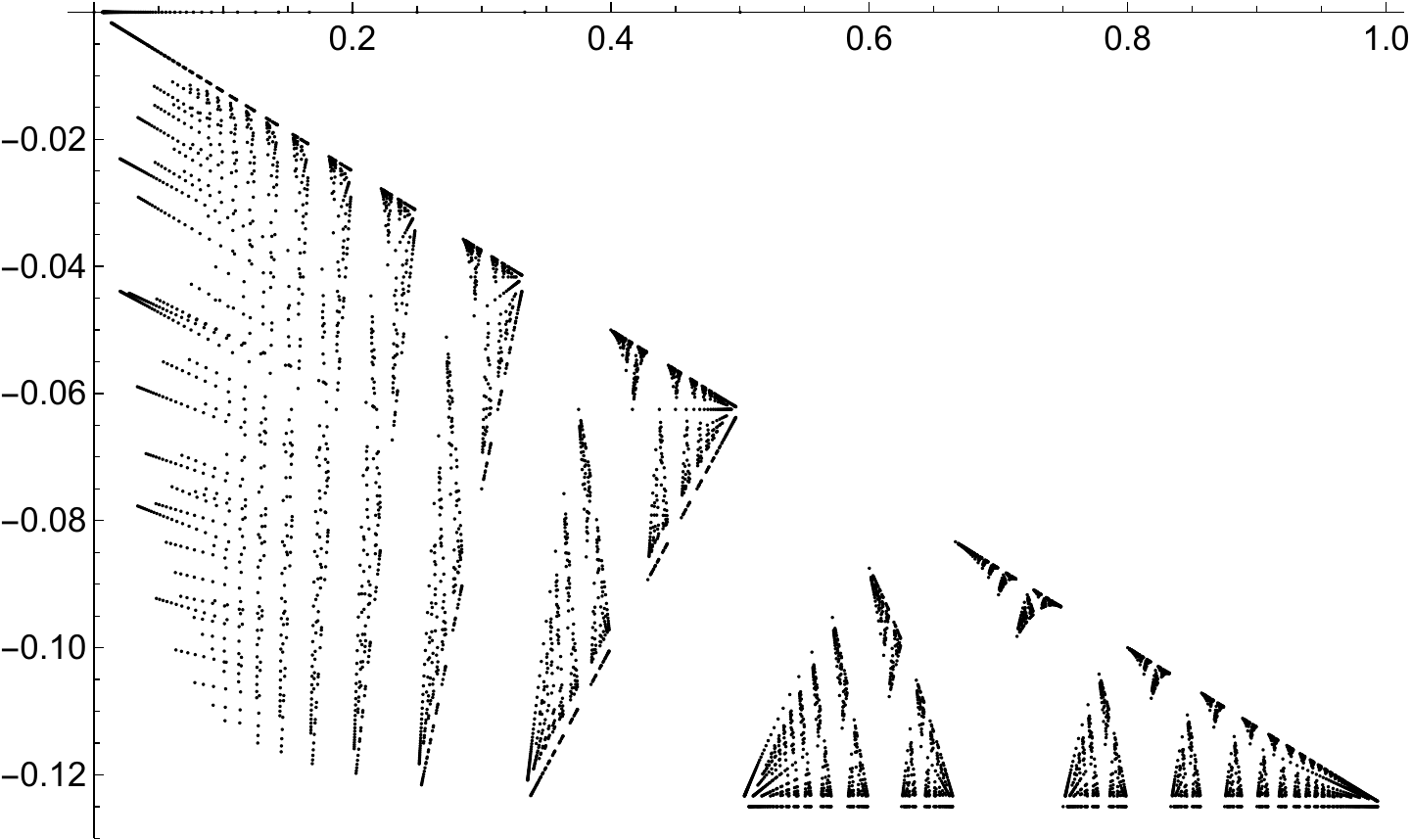}
\caption{$h_{\infty}(r)-\mathds{1}_{\{ Tr \neq 0 \}} \frac{1}{8Tr}$}
\end{subfigure}
\hspace{5mm}
\begin{subfigure}{0.47\textwidth}
\includegraphics[width=1 \linewidth]{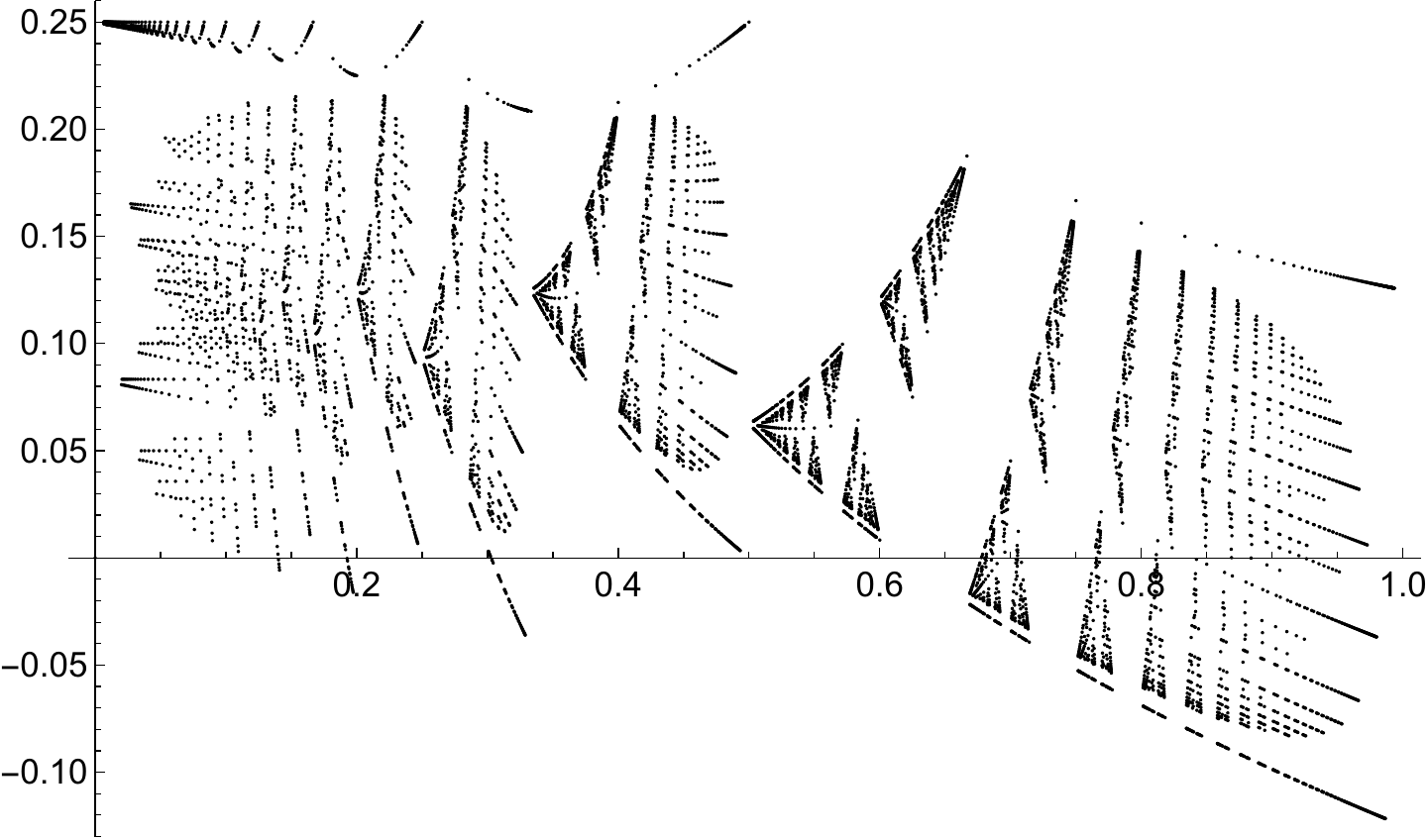}
\caption{$h_{-\infty}(r)+\frac{1}{8r}$}
\end{subfigure}
\end{center}
\begin{center}
\begin{subfigure}{0.47\textwidth}
\includegraphics[width=1 \linewidth]{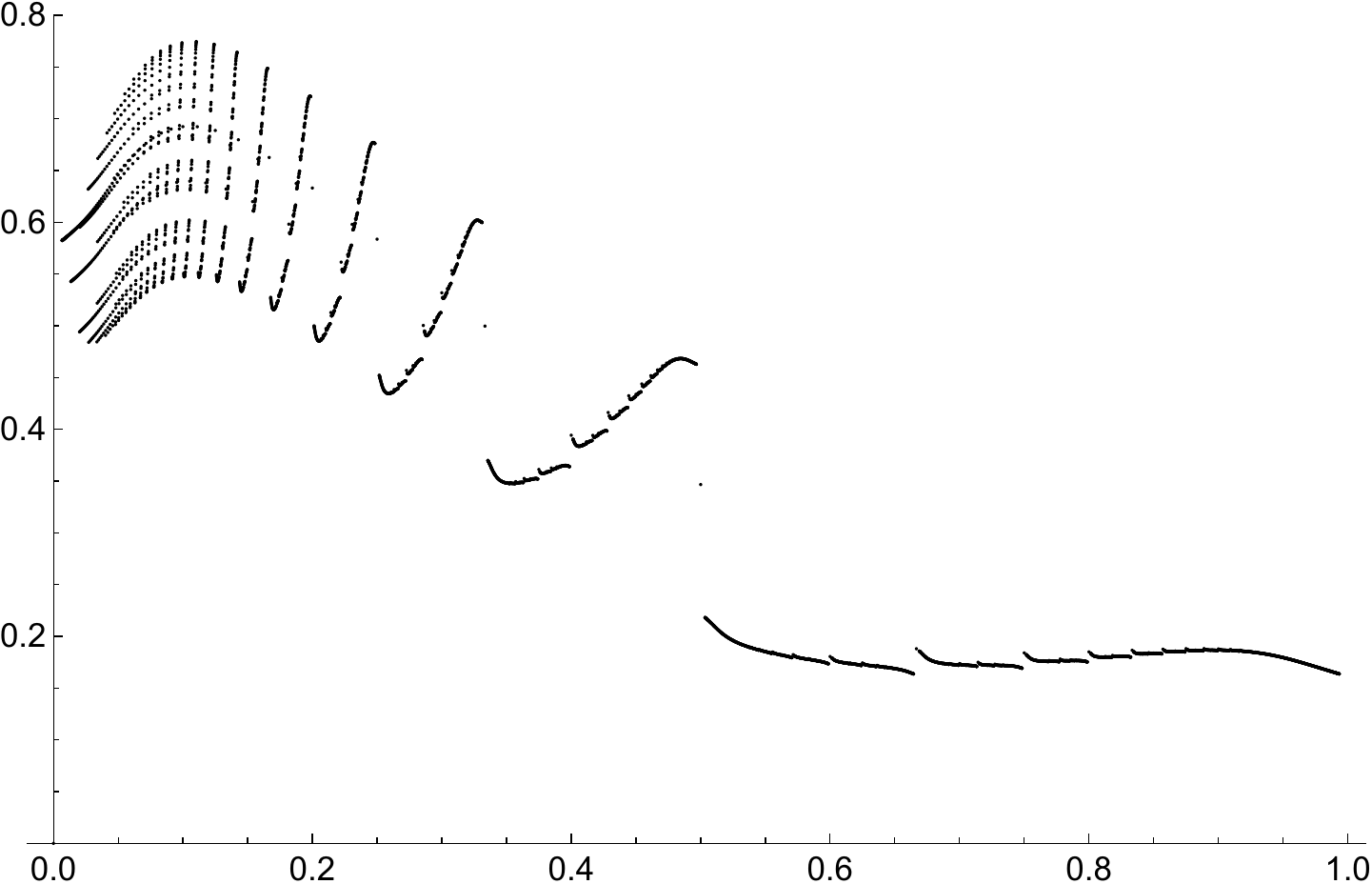}
\caption{$h_2(r)-\mathds{1}_{\{ Tr \neq 0 \}} \left( \frac{1}{8Tr}+\frac{1}{4} \log \frac{1}{Tr}\right)$}
\end{subfigure}
\hspace{5mm}
\begin{subfigure}{0.47\textwidth}
\includegraphics[width=1 \linewidth]{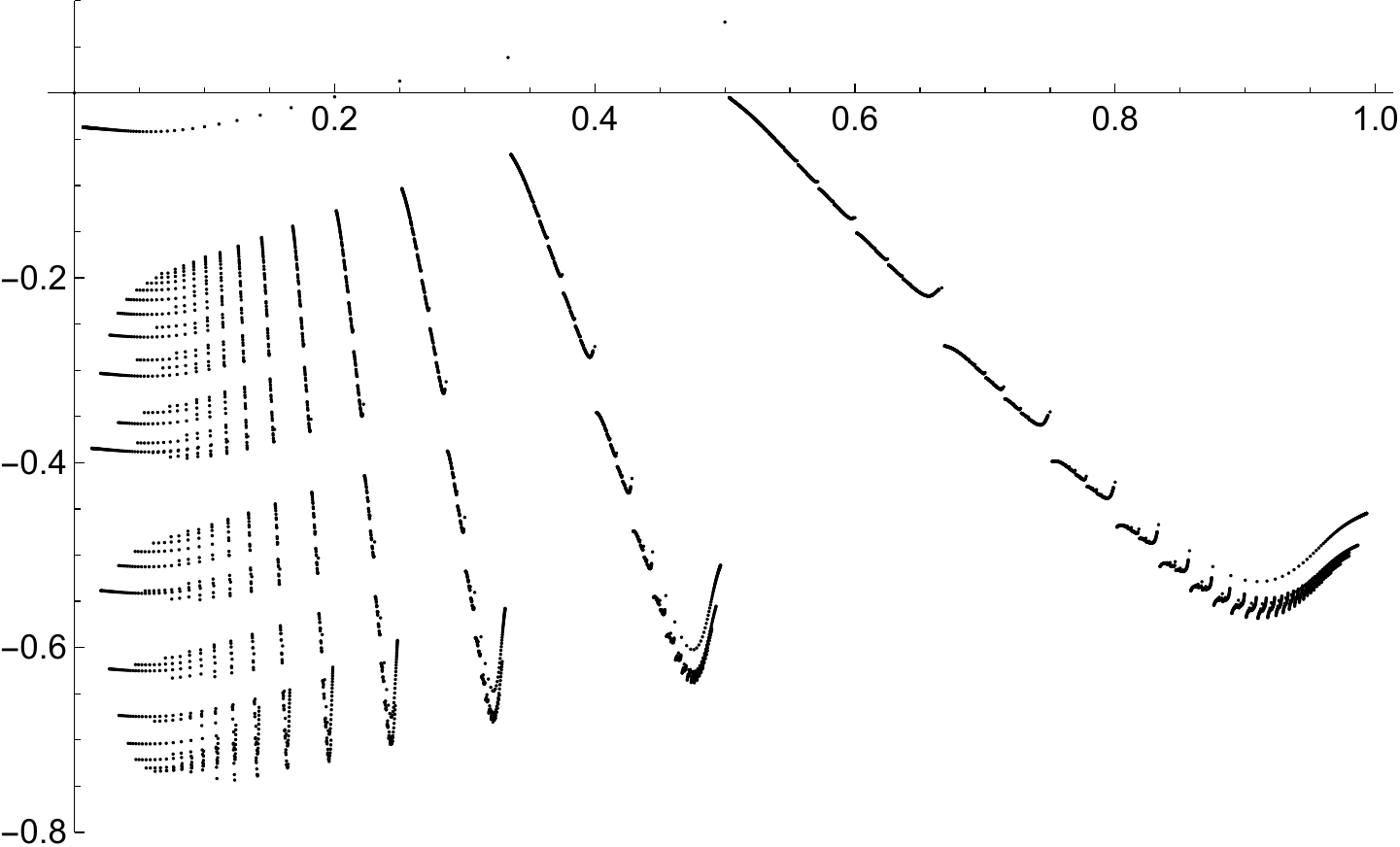}
\caption{$h_{-2}(r)+\frac{1}{8r}+\frac{1}{4}\log \frac{1}{r}$}
\end{subfigure}
\end{center}
\caption{Subtracting the asymptotics from $h_p(r)$ reveals an interesting self-similar structure. Finite $p$ values yield very similar graphs, but the cases $p=\pm \infty$ look markedly different. The four depicted functions are evaluated at all reduced rationals in $[0,1)$ with denominator at most $150$.}
\end{figure}

Given $\alpha \in \mathbb{R}$ and $M \in \mathbb{N}$, as a generalization of $J_p$ we define
\[ J_{p,M}(\alpha) = \left( \sum_{N=0}^{M-1} e^{p S_N (\alpha)} \right)^{1/p}, \qquad p \neq \pm \infty,0, \]
and
\[ J_{\infty,M} (\alpha) = \max_{0 \le N<M} e^{S_N(\alpha)}, \qquad J_{-\infty,M} (\alpha) = \min_{0 \le N<M} e^{S_N(\alpha)} . \]
Let $\tilde{J}_{p,M}(\alpha)$ be defined the same way, with $\tilde{S}_N(\alpha)$ instead of $S_N (\alpha)$. Letting $p_k/q_k$ denote the convergents to $\alpha$, roughly speaking, for $M \approx q_k$ we have $J_{p,M}(\alpha) \approx J_p(p_k/q_k)$ and $\tilde{J}_{p,M}(\alpha) \approx \tilde{J}_p(p_k/q_k)$.

The asymptotics of $\tilde{J}_{p,M}(\alpha)$ as $M \to \infty$ at various irrational $\alpha$ was studied in detail in \cite{AB2,BD2}. In particular, for a quadratic irrational $\alpha$ it was shown that
\[ \log \tilde{J}_{p,M}(\alpha) = \tilde{C}_p (\alpha) \log M +O(\max \{ 1, 1/|p|\}) \]
with some constant $\tilde{C}_p (\alpha)$ and an implied constant depending only on $\alpha$. Moreover, the constants satisfy the relation $\tilde{C}_p(\alpha) + \tilde{C}_{-p}(\alpha)=1$. In this paper, we establish a similar result for $J_{p,M}(\alpha)$.
\begin{thm}\label{quadratictheorem} For any $-\infty \le p \le \infty$, $p \neq 0$ and any quadratic irrational $\alpha$,
\[ \log J_{p,M} (\alpha) = C_p (\alpha) \log M+O(\max \{ 1,1/|p| \}) \]
with some constant $C_p (\alpha)$ and an implied constant depending only on $\alpha$.
\end{thm}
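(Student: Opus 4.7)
The plan is to reduce the problem to estimating $\log J_p(p_k/q_k)$ at an appropriate convergent of $\alpha$, then exploit the eventual periodicity of the Gauss map orbit of a quadratic irrational via a telescoping argument built on Theorem~\ref{asymptoticstheorem}.

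First, given $M$, I would pick $k$ so that $q_k \le M < q_{k+1}$, where $p_k/q_k$ denotes the convergents of $\alpha$. For a quadratic irrational the partial quotients are bounded, so $q_{k+1}/q_k$ is bounded and $\log q_k = k \log \lambda_\alpha + O(1)$ for some $\lambda_\alpha>1$. The classical Diophantine inequality $|S_N(\alpha)-S_N(p_k/q_k)| \ll N/q_k \ll 1$ for $N < q_{k+1}$, together with the inclusion of sums and the bound $q_{k+1}-q_k = O(q_k)$, lets me transfer between $J_{p,M}(\alpha)$ and $J_{p,q_k}(\alpha)$ and then to $J_p(p_k/q_k)$, picking up an error that is $O(1)$ for $p$ bounded away from $0$ and $O(1/|p|)$ for small $|p|$ (since the log-sum-exp can absorb at most $(\log q_k)/p$ but matched against the correct constant).

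Second, I would iterate the defining identity $\log J_p(r)=h_p(r)+\log J_p(T^2 r)$ starting from $r=p_k/q_k$. If the continued fraction of $\alpha$ is $[0;a_1,a_2,\ldots]$, then $T^{2j}(p_k/q_k)=[0;a_{2j+1},\ldots,a_k]$ is a rational close to the irrational $\beta_j := T^{2j}\alpha$, with the approximation exponentially good in $k-2j$ (by the bounded partial quotient property). Telescoping gives
\[
\log J_p(p_k/q_k) = \sum_{j=0}^{\lfloor k/2\rfloor - 1} h_p\bigl(T^{2j}(p_k/q_k)\bigr) + \text{boundary term},
\]
where the boundary term is $O(1)$ since the orbit terminates at a rational of bounded denominator.

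Third, I would exploit that for a quadratic irrational the sequence $(\beta_j)$ is eventually periodic, say with period $\ell$ starting from some index $j_0$. The plan is to show that $h_p(T^{2j}(p_k/q_k))$ depends, up to an error that is geometrically small in $\min\{j, k-2j\}$, only on the residue $j \bmod \ell$; summing those geometric errors then gives a total of $O(1)$ rather than $O(k)$. This is the crux: the leading asymptotic $\mathds{1}_{\{\cdot\}}/(8\,T\cdot)+(1/2p)\log(1/T\cdot)$ from Theorem~\ref{asymptoticstheorem} is already a continuous function of the next partial quotient $a_{2j+2}$, which is periodic in $j$; for the remaining $O(\max\{1,(1/|p|)\log(1/|p|)\})$ error term, one must sharpen Theorem~\ref{asymptoticstheorem} along the orbit of a quadratic irrational to show that it too stabilizes to a periodic function of $j$ modulo a geometrically decaying remainder. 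Once this is established, the sum equals $\lfloor k/2\rfloor\cdot \bar h_p(\alpha)+O(\max\{1,1/|p|\})$ with $\bar h_p(\alpha)$ the average of the stabilized value over one period.

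Finally, combining with $\log q_k \asymp k$ gives $\log J_p(p_k/q_k)=C_p(\alpha)\log M + O(\max\{1,1/|p|\})$ for the constant $C_p(\alpha)=\bar h_p(\alpha)/(2\log\lambda_\alpha)$, and the reduction in Step~1 propagates the same error bound to $\log J_{p,M}(\alpha)$. The main obstacle will be Step~3: one really needs a geometrically-decaying error estimate for $h_p$ along the shadowing orbit, since a naive application of Theorem~\ref{asymptoticstheorem} with its $O(1)$ error yields only $O(\log M)$, which is too weak. I would expect to prove this by reopening the proof of Theorem~\ref{asymptoticstheorem} and tracking how the remainder terms depend on the full tail of the continued fraction, using that for a quadratic irrational this tail is eventually periodic so the remainders converge exponentially in $k-2j$.
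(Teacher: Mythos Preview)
Your approach via the telescoping sum $\log J_p(p_k/q_k) = \sum_j h_p(T^{2j}(p_k/q_k))$ is natural but genuinely different from the paper's, and the gap you yourself flag in Step~3 is real and, as far as is currently known, not closable by the method you propose.

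The paper avoids $h_p$ entirely. Instead it proves directly (via Ostrowski's formula and a shift lemma exploiting the periodicity of the continued fraction of $\alpha$) that the sequence $\log J_{p,q_{s+km}}(\alpha)$ is approximately additive in $k$: for all $i,k\ge 1$,
\[
\log J_{p,q_{s+(i+k)m}}(\alpha) = \log J_{p,q_{s+im}}(\alpha) + \log J_{p,q_{s+km}}(\alpha) + O(\max\{1,1/|p|\}),
\]
where $m$ is an (even) period length. Fekete's subadditive lemma then gives $\log J_{p,q_{s+km}}(\alpha) = C'_p(\alpha)\,k + O(\max\{1,1/|p|\})$ at once, and the general $M$ follows by monotonicity in $M$. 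No understanding of $h_p$ near a quadratic irrational is needed.

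Your Step~3, by contrast, amounts to asking that $h_p$ have a limit (with summable error) at the finitely many quadratic irrationals $T^{2j}\alpha$, along their convergents. The paper states exactly this as an open conjecture in the remark following Theorem~\ref{continuitytheorem}, and the discussion after Theorem~\ref{quadratictheorem} makes the implication explicit: \emph{if} $\lim_{r\to\sqrt{3}-1}h_p(r)$ exists, its value must be $C_p(\sqrt{3})\log(2+\sqrt{3})$, but existence is not established. The obstruction is visible in Theorem~\ref{modulusofcontinuitytheorem}: the oscillation bound for $h_p$ on $I_{k+1}$ is $a_1a_2/q_k + \sqrt{(\log a_{k+1})/(\min\{1,|p|\}\,a_{k+1})}$, and for bounded partial quotients the second term does \emph{not} decay. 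That term originates in the factorization step (Lemma~\ref{factorlemma}), where the approximation $b_k(N)\,q_k\|q_k\alpha\|\approx 1/2$ carries an error of order $\sqrt{(\log a_{k+1})/a_{k+1}}$ that is genuinely present for generic $N$, not an artifact of bookkeeping. Reopening the proof of Theorem~\ref{asymptoticstheorem} does not help either, since that proof goes through the matching lemma with an $O(1)$ term of the same provenance. In short, your plan would prove the theorem only after first settling a conjecture the paper explicitly leaves open; the Fekete argument bypasses this completely.
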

\noindent Relation \eqref{quadraticmaxmin} is a special case of Theorem \ref{quadratictheorem} with $p=\pm \infty$. Note that $0<C_{\infty}(\alpha) \le C_p (\alpha)$ if $p>0$, and $C_p(\alpha) \le C_{-\infty}(\alpha)<0$ if $p<0$. Unlike $C_{\pm \infty}(\alpha)$, we do not know how to compute $C_p(\alpha)$ for finite $p$, even for simple irrationals such as the golden mean.

The constants $C_p(\alpha)$ and $\tilde{C}_p(\alpha)$ are closely related to the limit of the functions $h_p$ and $\tilde{h}_p$ at quadratic irrationals. As an illustration, consider $\sqrt{3}-1=[0;\overline{1,2}]$, and let $p_k/q_k$ denote its convergents. Then $T^2 (p_k/q_k)=p_{k-2}/q_{k-2}$, hence by the definition of $h_p$ and the fact that $\log q_k \sim (k/2) \log (2+\sqrt{3})$,
\[ \sum_{0 \le j <k/2} h_p (p_{k-2j}/q_{k-2j}) = \log J_p (p_k/q_k) = \log J_{p,q_k} (\sqrt{3}) +O(1) = \frac{C_p(\sqrt{3}) \log (2+\sqrt{3})}{2} k +O(1) . \]
Thus if $\lim_{r \to \sqrt{3}-1} h_p(r)$ exists, then its value must be $C_p(\sqrt{3})\log (2+\sqrt{3})$. In particular, while we cannot establish the continuous extension of $h_{\pm \infty}$ to $\sqrt{3}-1$, we know that in case they can be continuously extended to that point, their values must be $h_{\infty}(\sqrt{3}-1)=1/4$ and $h_{-\infty} (\sqrt{3}-1)=-1/12$; this is in good accordance with the numerics. For a general quadratic irrational $\alpha$, the constant $C_p(\alpha)$ can be similarly expressed in terms of the limit of $h_p$ at the points of the finite orbit of $\alpha$ under $T^2$, provided that these limits exist.

\section{Limit laws}\label{limilawsection}

Confirming a conjecture of Bettin and Drappeau \cite{BD2}, Aistleitner and the author \cite{AB1} proved the following limit law for the value distribution of $\tilde{J}_p(r)$ with a random rational $r$; more precisely, for a randomly chosen element of $F_Q = \{ a/q \in [0,1] \, : \mathrm{gcd} (a,q)=1, \,\, 1 \le q \le Q \}$, the set of Farey fractions of order $Q$. If $a/q \sim \mathrm{Unif}(F_Q)$, then for any $0<p \le \infty$,
\[ \frac{\log \tilde{J}_p (a/q) - \tilde{E}_{p,q}}{\tilde{\sigma}_q} \overset{d}{\to} \mathrm{Stab} (1,1) \qquad \textrm{as } Q \to \infty , \]
where $\tilde{E}_{p,q} = \frac{3 \mathrm{Vol}(4_1)}{\pi^3} \log q \log \log q+\tilde{D}_p \log q$ and $\tilde{\sigma}_q=\frac{3\mathrm{Vol}(4_1)}{2 \pi^2} \log q$, with the constant
\begin{equation}\label{Dtilde}
\tilde{D}_p = \frac{3 \mathrm{Vol}(4_1)}{\pi^3} \left( \log \frac{6}{\pi} - \gamma \right) + \frac{12}{\pi^2} \int_0^1 \frac{\tilde{h}_p (x) - \frac{\mathrm{Vol}(4_1)}{4 \pi} \lfloor 1/x \rfloor}{1+x} \, \mathrm{d}x.
\end{equation}
Here $\gamma$ denotes the Euler--Mascheroni constant. This was proved in \cite{AB1} for $p=2$, but the proof works mutatis mutandis for all $0<p \le \infty$. The identity $\tilde{J}_{-p}(r)=q/\tilde{J}_p(r)$ mentioned in Section \ref{quantumsection} means that $\log \tilde{J}_p (a/q) + \log \tilde{J}_{-p} (a/q) = \log q$, and a limit law follows for $-\infty \le p<0$ as well.

In this paper, we show a similar limit law for $J_p(r)$ with a random rational $r$.
\begin{thm}\label{Jptheorem} Let $a/q \sim \mathrm{Unif}(F_Q)$. For any $0<p \le \infty$ and $-\infty \le p'<0$,
\[ \left( \frac{\log J_p (a/q) - E_{p,q}}{\sigma_q}, \frac{\log J_{p'} (a/q) - E_{p',q}}{\sigma_q} \right) \overset{d}{\to} \mathrm{Stab}(1,1) \otimes \mathrm{Stab}(1,-1) \qquad \textrm{as } Q \to \infty , \]
where, for any $p \neq 0$, $E_{p,q} = \mathrm{sgn}(p) \frac{3}{4 \pi^2} \log q \log \log q + D_p \log q$ and $\sigma_q = \frac{3}{8 \pi} \log q$, with the constant
\begin{equation}\label{Dp}
D_p = - \mathrm{sgn} (p) \frac{3}{4 \pi^2}  \left( \gamma+ \log \frac{\pi}{3} \right) + \left\{ \begin{array}{ll} \frac{6}{\pi^2} \int_0^1 \frac{h_p (x) - \frac{1}{8} \lfloor 1/Tx \rfloor}{1+x} \, \mathrm{d}x & \textrm{if } p>0, \\ \frac{6}{\pi^2} \int_0^1 \frac{h_p (x) + \frac{1}{8} \lfloor 1/x \rfloor}{1+x} \, \mathrm{d}x & \textrm{if } p<0 . \end{array} \right.
\end{equation}
\end{thm}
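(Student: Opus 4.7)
The plan is to adapt the strategy used in \cite{AB1} for the parallel result on $\tilde{J}_p$, driven by the quantum-modular structure of $h_p$ established in Theorems \ref{asymptoticstheorem}--\ref{onesidedlimittheorem}. Iterating the defining identity $\log J_p(r) - \log J_p(T^2 r) = h_p(r)$ along the orbit $r_j := T^{2j}(a/q)$ until it terminates at $0$ (using $\log J_p(0) = 0$) yields the telescoping
\[ \log J_p(a/q) = \sum_{j=0}^{\lceil L/2 \rceil - 1} h_p(r_j), \]
where $L$ is the length of the continued fraction expansion $a/q = [0;a_1,\ldots,a_L]$. By Theorem \ref{asymptoticstheorem}, each summand decomposes as $h_p(r_j) = \mathrm{sgn}(p) \frac{1}{8} a_{2j+\varepsilon_p} + \psi_p(r_j)$, where the denoised remainder $\psi_p(x) := h_p(x) - \mathrm{sgn}(p) \frac{1}{8}\lfloor 1/T^{\varepsilon_p-1} x\rfloor$ is bounded up to an integrable logarithmic factor.

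The second step is to analyze the two resulting sums when $a/q \sim \mathrm{Unif}(F_Q)$, using the transfer-operator/Nagaev--Guivarc'h framework for random Farey fractions developed in \cite{BD2} and already applied to a strictly analogous problem in \cite{AB1}. This supplies a L\'evy-type equidistribution $L/\log q \to \frac{12 \log 2}{\pi^2}$ in probability, asymptotic joint i.i.d.\ Gauss--Kuzmin statistics for the partial quotients $(a_k)_{k\ge 1}$, a Birkhoff-type ergodic average
\[ \sum_{j=0}^{\lceil L/2\rceil - 1} \psi_p(r_j) = \frac{6}{\pi^2}(\log q) \int_0^1 \frac{\psi_p(x)}{1+x} \, \mathrm{d}x + o(\log q) \]
in probability (matching the $h_p$-integral appearing in the definition \eqref{Dp} of $D_p$), and a classical stable limit theorem of index $1$ for the heavy-tailed sum $\mathrm{sgn}(p) \frac{1}{8}\sum_j a_{2j+\varepsilon_p}$, driven by the Gauss--Kuzmin tail $\mathbb{P}(a_k > n) \sim 1/(n \log 2)$. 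With $N = \lceil L/2\rceil \sim \frac{6 \log 2}{\pi^2}\log q$ summands, the standard stable norming gives exactly $\sigma_q = \frac{3}{8\pi}\log q$, double-logarithmic centering $\mathrm{sgn}(p)\frac{3}{4\pi^2}\log q \log \log q$, and the linear residual $-\mathrm{sgn}(p)\frac{3}{4\pi^2}(\gamma + \log \frac{\pi}{3})\log q$ from the Gauss--Kuzmin tail constant, reproducing $E_{p,q}$ together with the first summand of $D_p$.

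The product structure of the joint limit for $p>0>p'$ then follows from the fact that the heavy-tailed driver of $\log J_p(a/q)$ sums over even-indexed partial quotients while that of $\log J_{p'}(a/q)$ sums over odd-indexed ones; joint asymptotic i.i.d.\ statistics make the two subsequences $(a_{2k})_k$ and $(a_{2k+1})_k$ asymptotically independent, yielding the product of two independent stable laws, with the skewness flip $\mathrm{Stab}(1,1)\to\mathrm{Stab}(1,-1)$ for the second coordinate inherited from the minus sign in the decomposition $h_{p'}(r_j) = -\frac{1}{8} a_{2j+1} + \psi_{p'}(r_j)$. The principal obstacle will be the rigorous execution of this probabilistic step: $\mathrm{Unif}(F_Q)$ is not Gauss-invariant, so both the ergodic averaging and the stable CLT must be obtained by spectral perturbation of a Fourier-twisted Gauss--Kuzmin--Wirsing transfer operator, in the spirit of \cite{AB1,BD2}, and one must verify that $\psi_p$ lies in the relevant function class (bounded variation apart from controlled singularities at rationals). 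Theorems \ref{asymptoticstheorem}, \ref{continuitytheorem} and \ref{onesidedlimittheorem} supply the size bound, the almost-everywhere continuity, and the one-sided limits at rationals required for this verification. A secondary bookkeeping concern is to confirm that the aggregate $O(L) = O(\log q)$ of the $O(1)$ per-step errors in Theorem \ref{asymptoticstheorem} collapses under Farey averaging to a deterministic $\log q$ contribution that is absorbed into $D_p$, rather than producing additional random fluctuations at the critical $\log q$ scale.
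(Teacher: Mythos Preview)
Your proposal is correct and follows essentially the same route as the paper: telescope $\log J_p$ along $T^2$, split $h_p$ into $\mathrm{sgn}(p)\frac{1}{8}a_{2j+\varepsilon_p}$ plus an integrable remainder, and then feed both pieces into the transfer-operator limit theorem for Farey fractions. The relevant black box is \cite[Theorem 3.1]{BD1} (not \cite{BD2}), applied with period $m=2$; the function-class obstacle you flag for $\psi_p$ is handled in the paper not by bounded variation but by sandwiching $g_p$ between smooth approximants $g_p^{\pm}$ with prescribed logarithmic behavior near the singularities (Lemma \ref{gplemma}), and the explicit stable centering constants come from the closed-form evaluation of $\int_0^1 (e^{i(t_1\lfloor 1/Tx\rfloor+t_2\lfloor 1/x\rfloor)}-1)/(1+x)\,\mathrm{d}x$ in Lemma \ref{It1t2lemma}.
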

\noindent In particular,
\[ \frac{\log J_p (a/q) - E_{p,q}}{\sigma_q} \overset{d}{\to} \mathrm{Stab} (1,\mathrm{sgn}(p)) \qquad \textrm{as } Q \to \infty . \]
\begin{remark} The identity $J_{-p}(r)=1/J_p(1-r)$ and the fact that $a/q \mapsto 1-a/q$ is a bijection of $F_Q$ show that $\log J_{-p} (a/q)$ and $-\log J_p (a/q)$ are identically distributed. The previous limit law thus implies that $E_{-p,q}=-E_{p,q}$, and consequently $D_{-p}=-D_p$, a relation which is not immediate from the definition \eqref{Dp} of $D_p$.
\end{remark}

The main idea is to consider the telescoping sum $\log \tilde{J}_p(r) = \sum_{j \ge 0} \tilde{h}_p (T^j r)$; note that $\tilde{h}_p (0)=0$. Using the asymptotics \eqref{tildehpasymptotics} and the solution to Zagier's continuity conjecture, for $0<p \le \infty$ we can write $\tilde{h}_p (r) = \frac{\mathrm{Vol}(4_1)}{4 \pi} a_1 + \tilde{g}_p(r)$ with an a.e.\ continuous Lebesgue integrable function $\tilde{g}_p(x)$. Letting $a/q=[0;a_1,a_2,\ldots, a_L]$ be a random fraction, we thus have
\[ \log \tilde{J}_p (a/q) = \frac{\mathrm{Vol}(4_1)}{4 \pi} \sum_{j \ge 0} a_{j+1} + \sum_{j \ge 0} \tilde{g}_p (T^j (a/q)) . \]
The first sum, with suitable centering and scaling, converges in distribution to $\mathrm{Stab}(1,1)$, whereas the second sum, scaled by $\log q$, converges in distribution to a constant. This leads to the limit law for $\log \tilde{J}_p$.

We follow a similar strategy for $J_p$. We consider the telescoping sum $\log J_p(r) = \sum_{j \ge 0} h_p(T^{2j} r)$; note that $h_p(0)=0$. Using Theorems \ref{asymptoticstheorem} and \ref{continuitytheorem}, we can write $h_p(r) = \mathrm{sgn} (p) a_{\varepsilon_p}/8 + g_p(r)$ with an a.e.\ continuous Lebesgue integrable function $g_p(x)$. Letting $a/q=[0;a_1,a_2,\ldots, a_L]$ be a random fraction, we thus have
\[ \log J_p (a/q) = \left\{ \begin{array}{ll} \frac{1}{8} \sum_{j \ge 0} a_{2j+2} + \sum_{j \ge 0} g_p (T^{2j}(a/q)) & \textrm{if } p>0, \\ - \frac{1}{8} \sum_{j \ge 0} a_{2j+1} + \sum_{j \ge 0} g_p (T^{2j} (a/q)) & \textrm{if } p<0 . \end{array} \right. \]
The main difference is that the main term in $\log J_p(a/q)$ now depends only on the partial quotients with even resp.\ odd indices if $p>0$ resp.\ $p<0$. This explains the convergence of the joint distribution to a product measure in Theorem \ref{Jptheorem}.

Classical mixing properties of the sequence of partial quotients lead to similar limit laws for random real numbers.
\begin{thm}\label{JpMtildetheorem} Let $\alpha \sim \mu$ with a Borel probability measure $\mu$ on $[0,1]$ which is absolutely continuous with respect to the Lebesgue measure. For any $0<p \le \infty$,
\[ \frac{\log \tilde{J}_{p,M}(\alpha) - \tilde{E}_{p,M}}{\tilde{\sigma}_M} \overset{d}{\to} \mathrm{Stab}(1,1) \qquad \textrm{as } M \to \infty, \]
where $\tilde{E}_{p,M}=\frac{3 \mathrm{Vol}(4_1)}{\pi^3} \log M \log \log M + \tilde{D}_p \log M$ and $\tilde{\sigma}_M= \frac{3\mathrm{Vol}(4_1)}{2 \pi^2} \log M$, with the constant $\tilde{D}_p$ defined in \eqref{Dtilde}.
\end{thm}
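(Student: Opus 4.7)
The strategy is to transfer the limit law for $\log \tilde{J}_p(a/q)$ at random Farey fractions (recalled from \cite{AB1} just before Theorem \ref{Jptheorem}) to random reals by approximating $\log \tilde{J}_{p,M}(\alpha)$ by $\log \tilde{J}_p(p_k/q_k)$ for the convergent $p_k/q_k$ of $\alpha$ with denominator closest to $M$. Concretely, let $k=k(M,\alpha)$ be defined by $q_k \le M<q_{k+1}$; Lévy's theorem gives $k \sim \frac{12 \log 2}{\pi^2} \log M$ $\mu$-a.s., which is the conversion factor between the ``number of steps of the Gauss map'' and the ``size of $M$''.

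The first, and hardest, step is an approximation lemma of the form
\[
\log \tilde{J}_{p,M}(\alpha) = \log \tilde{J}_p(p_k/q_k) + o(\tilde{\sigma}_M) \qquad \textrm{in $\mu$-measure.}
\]
Because $|\alpha - p_k/q_k| \le 1/(q_k q_{k+1})$, the Birkhoff sums $\tilde{S}_N(\alpha)$ and $\tilde{S}_N(p_k/q_k)$ differ by a controllable amount for $N<q_k$ (via a Koksma--Hlawka-type estimate for $\log|2\sin(\pi x)|$, together with Ostrowski-based care near $x=0$). The awkward range is the tail $q_k \le N<M$, whose contribution must be shown to be absorbed by the main term even on the rare but essential event that $a_{k+1}$ is very large---precisely the event that drives the stable-law fluctuation. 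Here I would exploit the quasi-periodicity $\tilde{S}_{N+q_k}(\alpha) \approx \tilde{S}_N(\alpha) + \tilde{S}_{q_k}(\alpha)$ and the sharp estimates for $\tilde{S}_{q_k}$ available in \cite{AB2}.

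Once the approximation is established, I proceed exactly as sketched after Theorem \ref{Jptheorem}. Telescoping with $\tilde{h}_p(0)=0$ and the $\mu$-a.e.\ continuity of $\tilde{h}_p$ from \cite{AB1} yields
\[
\log \tilde{J}_p(p_k/q_k) = \sum_{j=0}^{k-1} \tilde{h}_p(T^j \alpha) + o(\tilde{\sigma}_M),
\]
after observing that $T^j(p_k/q_k) \to T^j \alpha$ for each fixed $j$ and that, for $j$ close to $k$, $\tilde{h}_p$ is bounded by the uniform part of \eqref{tildehpasymptotics}. The decomposition $\tilde{h}_p(r) = \frac{\mathrm{Vol}(4_1)}{4\pi} \lfloor 1/r \rfloor + \tilde{g}_p(r)$ with $\tilde{g}_p$ Lebesgue integrable and a.e.\ continuous (afforded by \eqref{tildehpasymptotics} and the continuity theorem) splits this sum into a partial-quotient sum $\frac{\mathrm{Vol}(4_1)}{4\pi}\sum_{j=1}^k a_j$ and an ergodic sum $\sum_{j=0}^{k-1}\tilde{g}_p(T^j \alpha)$.

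For the partial-quotient sum, the classical stable limit theorem of Heinrich (valid for any absolutely continuous initial law on $[0,1]$) with centering $k \log k / \log 2$ and scaling $k/\log 2$ gives convergence to $\mathrm{Stab}(1,1)$. For the ergodic sum, the Birkhoff theorem for the Gauss map with its invariant measure $\mathrm{d}\mu_G = \frac{1}{\log 2} \frac{\mathrm{d}x}{1+x}$ produces $k \int_0^1 \tilde{g}_p \, \mathrm{d}\mu_G + o(k)$ in $\mu$-measure, contributing only to the coefficient of the linear term $\tilde{D}_p \log M$. Converting $k$ to $\log M$ via the Lévy constant recovers the stated centering $\tilde{E}_{p,M}$ and scaling $\tilde{\sigma}_M$, and a direct rewriting of $\int \tilde{g}_p \, \mathrm{d}\mu_G$ back in terms of $\tilde{h}_p$ identifies the constant $\tilde{D}_p$ with \eqref{Dtilde}.
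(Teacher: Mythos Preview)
Your outline follows the paper's approach --- telescope via $\tilde h_p$, decompose into $\frac{\mathrm{Vol}(4_1)}{4\pi}\sum a_j$ plus an integrable remainder $\tilde g_p$, then invoke Heinrich's stable law for the first piece and Birkhoff's theorem for the second. The paper's execution differs from yours in one organisational point that also closes a gap in your sketch.

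You work with the random index $k=k(M,\alpha)$ and propose to control the tail range $q_k\le N<M$ directly via quasi-periodicity. This is unnecessary: monotonicity of $M\mapsto\log\tilde J_{p,M}(\alpha)$ already gives $\log\tilde J_{p,q_k}(\alpha)\le\log\tilde J_{p,M}(\alpha)\le\log\tilde J_{p,q_{k+1}}(\alpha)$, and \cite[Proposition~3]{AB2} yields $|\tilde S_N(\alpha)-\tilde S_N(p_k/q_k)|\ll\max_{j\le k}\log(a_j+1)=O(\log k)$ a.e.\ for $N<q_k$, so both endpoints are within $O(\log k)$ of $\log\tilde J_p$ at a convergent; the tail is never touched. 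More importantly, Heinrich's and Birkhoff's theorems are stated for sums with a \emph{deterministic} number of terms, so applying them to $\sum_{j\le k(M,\alpha)}$ is not justified as written, and ``converting $k$ to $\log M$ via L\'evy'' afterwards does not by itself repair this. The paper handles both issues at once: it chooses deterministic $k_M,K_M\approx\frac{12\log 2}{\pi^2}\log M$ with $K_M-k_M\ll(\log M)^{3/4}$ so that $k_M\le k(M,\alpha)\le K_M$ with high probability (by the LIL for $\log q_k$), proves the limit law for $\log\tilde J_p(p_{k_M}/q_{k_M})$ with deterministic $k_M$ under the Gauss measure (then transfers to $\mu$ by mixing), and shows the sandwich gap $\log\tilde J_p(p_{K_M}/q_{K_M})-\log\tilde J_p(p_{k_M}/q_{k_M})$ is $o(\log M)$ in measure by telescoping once more and applying a union bound to $\sum_{k_M\le j<K_M}a_j$.
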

\noindent Formula \eqref{maxtildelimitlaw} is a special case of Theorem \ref{JpMtildetheorem} with $p=\infty$. Since
\[ \log \tilde{J}_{p,M} (\alpha) + \log \tilde{J}_{-p,M} (\alpha) = \log M + o(\log M) \qquad \textrm{in $\mu$-measure}, \]
a similar limit law holds for $\log \tilde{J}_{p,M}(\alpha)$ with $-\infty \le p <0$.
\begin{thm}\label{JpMtheorem} Let $\alpha \sim \mu$ with a Borel probability measure $\mu$ on $[0,1]$ which is absolutely continuous with respect to the Lebesgue measure. For any $0<p \le \infty$ and $-\infty \le p' <0$,
\[ \left( \frac{\log J_{p,M} (\alpha) -E_{p,M}}{\sigma_M}, \frac{\log J_{p',M} (\alpha) - E_{p',M}}{\sigma_M} \right) \overset{d}{\to} \mathrm{Stab}(1,1) \otimes \mathrm{Stab}(1,-1) \qquad \textrm{as } M \to \infty , \]
where, for any $p \neq 0$, $E_{p,M} = \mathrm{sgn}(p) \frac{3}{4 \pi^2} \log M \log \log M + D_p \log M$ and $\sigma_M = \frac{3}{8 \pi} \log M$, with the constant $D_p$ defined in \eqref{Dp}.
\end{thm}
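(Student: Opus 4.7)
The plan is to transfer the Farey-fraction limit law of Theorem \ref{Jptheorem} to random reals by truncating at the convergents of $\alpha$. Let $p_k/q_k$ denote the convergents of $\alpha$ and set $K=K(\alpha,M)$ to be the largest index with $q_K\le M$; for $\mu$-a.e.\ $\alpha$, L\'{e}vy's theorem gives $K\sim \frac{12\log 2}{\pi^2}\log M$ as $M\to\infty$. The first step is to show that $\log J_{p,M}(\alpha)=\log J_p(p_K/q_K)+O(1)$ in $\mu$-probability. Ostrowski's explicit formula yields $S_N(\alpha)=S_N(p_K/q_K)+O(1)$ uniformly for $0\le N<q_K$, so the maxima, minima and exponential sums defining $J_{p,M}$ agree with those for $J_p(p_K/q_K)$ up to a bounded additive error; the extra indices $N\in[q_K,M)\subset[q_K,q_{K+1})$ are controlled via the Ostrowski expansion of $N$ together with the near-periodicity $S_{q_K+N'}(\alpha)=S_{N'}(\alpha)+O(1)$ that follows from $\|q_K\alpha\|=O(1/q_{K+1})$.

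Next, I would apply the telescoping identity
\[
\log J_p(p_K/q_K)=\sum_{j=0}^{\lfloor K/2\rfloor} h_p(T^{2j}(p_K/q_K))
\]
(valid because $T^2(p_k/q_k)=p_{k-2}/q_{k-2}$ and $h_p(0)=0$) and invoke Theorems \ref{asymptoticstheorem} and \ref{continuitytheorem} to decompose $h_p(r)=\mathrm{sgn}(p)\,a_{\varepsilon_p}(r)/8+g_p(r)$ with $g_p$ a.e.\ continuous and integrable against the Gauss measure. Since for each fixed $j$ one has $T^{2j}(p_K/q_K)\to T^{2j}\alpha$ as $K\to\infty$, and since the $O(1)$ terms with $j$ close to $K/2$ contribute only $o(\log M)$ in total, a standard approximation replaces $g_p(T^{2j}(p_K/q_K))$ by $g_p(T^{2j}\alpha)$, reducing the problem to the joint behaviour of
\[
\log J_{p,M}(\alpha)=\frac{\mathrm{sgn}(p)}{8}\sum_{j=0}^{\lfloor K/2\rfloor}a_{2j+\varepsilon_p}(\alpha)+\sum_{j=0}^{\lfloor K/2\rfloor}g_p(T^{2j}\alpha)+o(\log M)
\]
together with the analogous expression for $p'<0$.

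The Birkhoff ergodic theorem applied to the mixing map $T^2$ (valid under $\mu$ by absolute continuity) converts the second sum into $D_p\log M+o(\log M)$, with the constant $D_p$ of \eqref{Dp}. For the first sum, the tail estimate $\nu(a_1\ge n)\sim 1/(n\log 2)$ for the Gauss measure $\nu$, together with the $\psi$-mixing of the sequence $(a_{2j+\varepsilon_p})_{j\ge 0}$ (viewed as a function of $T^{2j}\alpha$), puts us in the setting of the Aaronson--Denker stable limit theorem, which, after the appropriate centering and scaling, yields convergence in distribution to $\mathrm{Stab}(1,\mathrm{sgn}(p))$; substituting $K\sim\frac{12\log 2}{\pi^2}\log M$ produces exactly the centering $E_{p,M}$ and the scale $\sigma_M$ of the statement.

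For the joint convergence the essential new point is the asymptotic independence of the even-parity partial-quotient sum (driving $\log J_{p,M}$ for $p>0$) and the odd-parity partial-quotient sum (driving $\log J_{p',M}$ for $p'<0$). I would establish this via convergence of the joint point process $\{(j/K,\,a_{2j+1}(\alpha),\,a_{2j+2}(\alpha))\}_{j\ge 0}$, after the appropriate rescaling in the second and third coordinates, to a pair of \emph{independent} Poisson point processes on $[0,1]\times(0,\infty)$ with common intensity $\mathrm{d}t\otimes\mathrm{d}x/(x^2\log 2)$; since each stable limit is a measurable functional of its own Poisson process, independence of the processes forces independence of the limits. The main obstacle is precisely this decoupling, since at any given index $j$ the pair $(a_{2j+1},a_{2j+2})$ is strongly correlated, whereas the dominant terms in the two sums typically sit at distinct indices separated by mixing. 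Making this rigorous requires either a careful truncation argument exploiting $\psi$-mixing bounds of $T^2$, or a multidimensional version of Aaronson--Denker for vector-valued functions of a mixing sequence.
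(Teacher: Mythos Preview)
The truncation step contains a genuine error. Your near-periodicity $S_{q_K+N'}(\alpha)=S_{N'}(\alpha)+O(1)$ holds for a \emph{single} shift by $q_K$, but for $N\in[q_K,M)\subset[q_K,q_{K+1})$ the Ostrowski digit $b_K(N)$ can range up to roughly $a_{K+1}$, and the $\ell=K$ term in Ostrowski's formula equals $(-1)^{K+1}\tfrac{b_K(N)}{2}\bigl(1-b_K(N)q_K\|q_K\alpha\|\bigr)+O(1)$, which for $K$ odd and $b_K(N)\approx a_{K+1}/2$ is of order $a_{K+1}/8$. Since $a_{K+1}$ is unbounded in $\mu$-probability, the claimed identity $\log J_{p,M}(\alpha)=\log J_p(p_K/q_K)+O(1)$ fails. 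The paper does not attempt an $O(1)$ bound: it sandwiches $\log J_{p,M}(\alpha)$ between $\log J_p(p_{k_M}/q_{k_M})$ and $\log J_p(p_{K_M}/q_{K_M})$ for two \emph{deterministic} even indices $k_M,K_M$ at $\frac{12\log 2}{\pi^2}\log M\mp(\log M)^{3/4}$, uses the law of the iterated logarithm for $\log q_k$ to show that $q_{k_M}\le M\le q_{K_M}$ with $\mu$-probability $1-o(1)$, and then shows via the decomposition \eqref{Jp1Jp2formula} that the gap between the two bounds is $o(\log M)$ in measure. A separate slip: the identity $T^2(p_k/q_k)=p_{k-2}/q_{k-2}$ is false in general, since $T^2(p_K/q_K)=[0;a_3,\dots,a_K]$ whereas $p_{K-2}/q_{K-2}=[0;a_1,\dots,a_{K-2}]$; the telescoping sum you wrote is nevertheless correct, for the simpler reason that $h_p(r)=\log J_p(r)-\log J_p(T^2r)$ and the forward iterates $T^{2j}(p_K/q_K)$ eventually reach $0$.

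For the asymptotic independence of the even- and odd-indexed partial-quotient sums, the paper avoids your point-process machinery entirely. It computes the joint characteristic function via a $\psi$-mixing lemma of Heinrich, which reduces the question to the small-$(t_1,t_2)$ asymptotics of $\int_0^1(e^{i(t_1\lfloor 1/Tx\rfloor+t_2\lfloor 1/x\rfloor)}-1)(1+x)^{-1}\,\mathrm{d}x$; an explicit calculation (Lemma~\ref{It1t2lemma}) shows this integral splits as a function of $t_1$ plus a function of $t_2$, up to a cross term of order $|t_1t_2|\log\frac{1}{|t_1|}\log\frac{1}{|t_2|}$ that vanishes after scaling. The product form of the limiting characteristic function then falls out directly, without a multidimensional Aaronson--Denker theorem or Poisson convergence.
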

\noindent Theorem \ref{maxmintheorem} is a special case of Theorem \ref{JpMtheorem} with $p=\infty$ and $p'=-\infty$.

\section{The function $h_p$}\label{proofhpsection}

Throughout this section, we fix a real number $\alpha$ and a parameter $-\infty \le p \le \infty$, $p \neq 0$, and define $\varepsilon_p$ as in \eqref{varepsilonp}. If $\alpha \in \mathbb{Q}$, we write its continued fraction expansion in the form $\alpha =[a_0;a_1,a_2,\ldots, a_L]$, and we let $q$ be the denominator of $\alpha$ in its reduced form. If $\alpha \not\in \mathbb{Q}$, we write its continued fraction expansion in the form $\alpha =[a_0;a_1,a_2,\ldots]$, and set $L=\infty$ and $q=\infty$.

The convergents to $\alpha$ are denoted by $p_{\ell}/q_{\ell}=[a_0;a_1,a_2,\ldots, a_{\ell}]$, $0 \le \ell < L+1$. Any integer $0 \le N <q$ can be uniquely written in the form $N=\sum_{\ell=0}^{L-1} b_{\ell}(N) q_{\ell}$, where $0 \le b_0(N) < a_1$ and $0 \le b_{\ell}(N) \le a_{\ell +1}$, $1 \le \ell <L$ are integers which further satisfy the rule that $b_{\ell+1} (N)=a_{\ell+2}$ implies $b_{\ell}(N)=0$. This is the so-called Ostrowski expansion of $N$ with respect to $\alpha$, a special number system tailored to the circle rotation by $\alpha$; in fact, it was first introduced in connection to $S_N(\alpha)$ \cite{OS}. The Ostrowski expansion of course has finitely many terms; more precisely, if $0 \le N <q_K$ with some integer $0 \le K \le L$, then $N=\sum_{\ell=0}^{K-1} b_{\ell}(N) q_{\ell}$.

The distance from the nearest integer function is denoted by $\| \cdot \|$. We will often use the fact that
\[ \frac{1}{a_{\ell +1}+2} \le q_{\ell} \| q_{\ell} \alpha \| \le \frac{1}{a_{\ell +1}}, \qquad 0 \le \ell <L, \]
except if $\ell=0$ and $a_1=1$; however, in the latter case $b_0(N)=0$ for all $0\le N < q$, and $\| q_0 \alpha \|$ does not enter our formulas. Recall also the recursion $q_{\ell+1}=a_{\ell+1}q_{\ell}+q_{\ell-1}$ with initial conditions $q_0=1$, $q_1=a_1$.

One of our main tools is an explicit formula for $S_N(\alpha)$ due to Ostrowski \cite{OS} (see \cite[p.\ 23]{BE} for a more recent proof).
\begin{lem}[Ostrowski]\label{ostrowskilemma} Let $0 \le N <q$ be an integer with Ostrowski expansion $N=\sum_{\ell=0}^{L-1} b_{\ell}(N) q_{\ell}$. Then
\[ S_N (\alpha) = \sum_{\ell=0}^{L-1} (-1)^{\ell+1} b_{\ell}(N) \left( \frac{1-b_{\ell}(N) q_{\ell} \| q_{\ell} \alpha \|}{2} - \| q_{\ell} \alpha \| \sum_{j=0}^{\ell-1} b_j(N) q_j - \frac{\| q_{\ell} \alpha \|}{2} \right) . \]
\end{lem}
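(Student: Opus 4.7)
The plan is to prove the formula by induction on $L$, the length of the Ostrowski expansion of $N$. The base case $N=0$ is trivial: both sides are empty sums. For the inductive step, set $b = b_{L-1}(N)$ and $N' = N - b\, q_{L-1} = \sum_{\ell < L-1} b_\ell q_\ell$, so $N'$ is an admissible Ostrowski integer of depth at most $L-1$ and the inductive hypothesis supplies the formula for $S_{N'}(\alpha)$. It then suffices to show
\[ S_N(\alpha) - S_{N'}(\alpha) = (-1)^{L} b \left( \frac{1 - b q_{L-1} \| q_{L-1}\alpha\|}{2} - \| q_{L-1}\alpha\| \sum_{j<L-1} b_j q_j - \frac{\|q_{L-1}\alpha\|}{2} \right). \]

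The natural way to attack the right-hand side is to decompose the integer interval $(N', N]$ of length $b q_{L-1}$ into $b$ consecutive blocks of $q_{L-1}$ integers each and evaluate every block sum exactly. For $0 \le i < b$, write $x_i = \{(N' + iq_{L-1})\alpha\}$ and
\[ T_i = \sum_{n=1}^{q_{L-1}} \bigl(\{x_i + n\alpha\} - \tfrac{1}{2}\bigr). \]
Because $q_{L-1}$ is a continued fraction denominator, $q_{L-1}\alpha - p_{L-1} = (-1)^{L-1} \|q_{L-1}\alpha\|$, and writing $\alpha = p_{L-1}/q_{L-1} + \varepsilon$ with $|\varepsilon| = \|q_{L-1}\alpha\|/q_{L-1}$ shows that $\{n\alpha\}$ for $n = 1,\dots,q_{L-1}$ is a small perturbation of the uniformly spaced lattice $\{np_{L-1}/q_{L-1}\}$. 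A direct summation then identifies $T_i$ as an affine function of $x_i$ with slope $(-1)^{L} \|q_{L-1}\alpha\|$ and an explicit constant term capturing the midpoint contribution.

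To finish, one uses $x_{i} = x_0 + (-1)^{L-1} i \| q_{L-1}\alpha\|$ (each block shifts the base point by $q_{L-1}\alpha \bmod 1$), where $x_0 = \{N'\alpha\} = \sum_{j<L-1}(-1)^j b_j q_j \|q_j\alpha\|$ by an auxiliary Ostrowski-type identity for $\{N'\alpha\}$. Summing the affine $T_i$ over $i=0,\dots,b-1$ produces a quadratic in $b$ whose coefficients match the claimed $\ell = L-1$ contribution. I expect the main obstacle to be the sign bookkeeping: the $(-1)^{\ell+1}$ prefactors, the alternating orientation of the sequence $x_i$, and the sign of $q_{L-1}\alpha - p_{L-1}$ all depend on the parity of $L-1$ and have to combine consistently. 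A secondary but essential point is to verify that the Ostrowski admissibility constraint ($b_{\ell+1} = a_{\ell+2}$ forces $b_\ell = 0$) guarantees that all the $x_i$ stay within a single fundamental interval, so that no hidden modular wraparound spoils the linearity used in computing $T_i$.
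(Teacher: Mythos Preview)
The paper does not supply its own proof of this lemma; it attributes the identity to Ostrowski and refers to Beck's monograph for a modern treatment. Your inductive scheme via block decomposition is exactly the standard route and is correct in outline.

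One point to tighten, however: your claim that $T_i$ is affine in $x_i=\{(N'+iq_{L-1})\alpha\}$ with slope $(-1)^L\|q_{L-1}\alpha\|$ is not the right formulation. As a function of the real variable $x$, the block sum $T(x)=\sum_{m=1}^{q_{L-1}}(\{x+m\alpha\}-\tfrac12)$ is piecewise linear with slope $q_{L-1}$ on each piece (each sawtooth contributes slope $1$), not slope $\|q_{L-1}\alpha\|$. The cleaner identity is that $T_i$ is affine in the \emph{integer} $N_i:=N'+iq_{L-1}$; concretely, under the no-wraparound hypothesis one has
\[
T_i=(-1)^{L}\left(\frac{1}{2}-\|q_{L-1}\alpha\|\,N_i-\frac{(q_{L-1}+1)\|q_{L-1}\alpha\|}{2}\right),
\]
which is checked by verifying the increment $\{(N_i+q_{L-1}+1)\alpha\}-\{(N_i+1)\alpha\}=(-1)^{L-1}\|q_{L-1}\alpha\|$ whenever no integer separates the two arguments. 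Summing over $i=0,\dots,b-1$ with $\sum_i N_i=bN'+q_{L-1}\binom{b}{2}$ then produces the $\ell=L-1$ term on the nose, and the auxiliary identity for $\{N'\alpha\}$ that you invoke is in fact unnecessary. The no-wraparound verification---driven by $N'+bq_{L-1}=N<q_L$ together with the admissibility constraint---is, as you correctly anticipate, where the genuine work lies.
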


\begin{remark}
The alternating factor $(-1)^{\ell+1}$ in Ostrowski's explicit formula is related to the fact that $S_N(\alpha)$ is an odd function in the variable $\alpha$. An application of the second iterate of the Gauss map corresponds to shifting the partial quotients by two indices, leaving the factor $(-1)^{\ell+1}$ unchanged.
\end{remark}

\subsection{Local optimum}

In this section, we ``locally optimize'' $S_N(\alpha)$ by choosing a single Ostrowski digit $b_k(N)$. Note that the $\ell=k$ term in Ostrowski's explicit formula in Lemma \ref{ostrowskilemma} is
\[ \frac{(-1)^{k+1}}{2} \cdot \frac{b_k(N)}{a_{k+1}} \left( 1- \frac{b_k(N)}{a_{k+1}} \right) +O(1) . \]
Given an odd resp.\ even index $k$, we can thus expect a particularly large resp.\ small value of $S_N(\alpha)$ when choosing $b_k(N) \approx a_{k+1}/2$. Lemma \ref{localoptimumlemma} below quantifies how the value of $S_N(\alpha)$ changes as we deviate from the optimal value $a_{k+1}/2$. In particular, in Lemma \ref{localcorollary} below we show that in the sum $\sum_{N=0}^{q-1} e^{p S_N(\alpha)}$ with $p>0$ resp.\ $p<0$, the main contribution comes from the terms with $b_k(N) \approx a_{k+1}/2$.

In the following lemma and in the sequel, we use the natural convention that $b_L(N)<a_{L+1}$ automatically holds.
\begin{lem}\label{localoptimumlemma} Let $0 \le N<q$ be an integer with Ostrowski expansion $N=\sum_{\ell=0}^{L-1} b_{\ell}(N) q_{\ell}$, and let $0 \le k <L$. Define $b_k^*=\lfloor a_{k+1}/2 \rfloor$, and
\[ N^* = \left\{ \begin{array}{ll} N+(b_k^*-b_k(N))q_k & \textrm{if } b_{k+1}(N)<a_{k+2}, \\ N+b_k^* q_k - q_{k+1} & \textrm{if } b_{k+1}(N)=a_{k+2}. \end{array} \right. \]
Then
\[ S_{N^*}(\alpha) - S_N(\alpha) = (-1)^{k+1} \frac{(b_k^*-b_k(N))^2}{2 a_{k+1}} + O \left( \frac{|b_k^*-b_k(N)|}{a_{k+1}} \right) \]
with a universal implied constant.
\end{lem}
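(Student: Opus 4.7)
The plan is to apply Ostrowski's explicit formula (Lemma~\ref{ostrowskilemma}) to both $N$ and $N^*$ and subtract term by term, tracking how the Ostrowski digits and the partial sums $P_\ell^{(\cdot)} = \sum_{j<\ell} b_j(\cdot) q_j$ change. A direct check shows that in Case~1 the expansion of $N^*$ agrees with that of $N$ in every position except $\ell=k$, where $b_k(N^*)=b_k^*$, while in Case~2 the constraint $b_{k+1}(N)=a_{k+2}$ forces $b_k(N)=0$ and the new expansion has $b_k(N^*)=b_k^*$ and $b_{k+1}(N^*)=a_{k+2}-1$, all other digits unchanged; both digit profiles remain valid Ostrowski expansions. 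Write $\delta = b_k^*-b_k(N)$ and $\varepsilon_\ell = q_\ell\|q_\ell\alpha\|$ for brevity.

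The main term comes entirely from the $\ell=k$ summand of the telescoped difference. Since $P_k$ is unchanged, this summand collapses to
\[ (-1)^{k+1}\delta\left[\tfrac{1}{2} - \tfrac{\varepsilon_k(b_k^*+b_k(N))}{2} - \|q_k\alpha\|P_k - \tfrac{\|q_k\alpha\|}{2}\right]. \]
Using $\|q_k\alpha\|(P_k+1/2) \le \varepsilon_k + 1/(2q_{k+1}) = O(1/a_{k+1})$ together with $\varepsilon_k = 1/a_{k+1}+O(1/a_{k+1}^2)$ and $b_k^* = a_{k+1}/2+O(1)$, substituting $b_k(N) = b_k^*-\delta$ and expanding yields
\[ (-1)^{k+1}\delta\left[\tfrac{1}{2} - \tfrac{b_k^*+b_k(N)}{2a_{k+1}}\right] + O(|\delta|/a_{k+1}) = (-1)^{k+1}\frac{\delta^2}{2a_{k+1}} + O(|\delta|/a_{k+1}), \]
which is exactly the claimed main term.

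It then remains to verify that every other $\ell$-summand contributes only $O(|\delta|/a_{k+1})$. For $\ell \ge k+2$ only $P_\ell$ changes, by $N^*-N$, producing a tail $-(N^*-N)\sum_{\ell\ge k+2}(-1)^{\ell+1}b_\ell\|q_\ell\alpha\|$; exploiting the alternating-sign identity $(-1)^{\ell+1}\|q_\ell\alpha\| = p_\ell-q_\ell\alpha$, the bound $b_\ell\|q_\ell\alpha\| \le 1/q_\ell$, the geometric decay $\sum_{\ell\ge k+2}1/q_\ell = O(1/q_{k+2})$, and $|N^*-N| \le q_{k+1}$ give $O(q_{k+1}/q_{k+2}) = O(1/a_{k+2}) = O(|\delta|/a_{k+1})$. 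For $\ell=k+1$ in Case~1 only $P_{k+1}$ shifts by $\delta q_k$, giving a contribution bounded by $b_{k+1}\|q_{k+1}\alpha\|\cdot|\delta|q_k \le |\delta|q_k/q_{k+1} \le |\delta|/a_{k+1}$; in Case~2 the digit $b_{k+1}$ also drops by one, and using the identity $a_{k+2}q_{k+1} = q_{k+2}-q_k$ to simplify $a_{k+2}\varepsilon_{k+1}$ leaves an extra contribution of order $O(1)$, which is absorbed into $O(|\delta|/a_{k+1})$ since $|\delta|=b_k^* \asymp a_{k+1}$ whenever Case~2 is non-degenerate. The main obstacle throughout is the tightness of the error: a naive bound at $\ell=k$ would only give $O(|\delta|)$, and without the alternating signs the tail $\ell\ge k+2$ is only $O(1)$, so one must carefully exploit the cancellation between $\tfrac{1}{2}$ and $\tfrac{\varepsilon_k(b_k^*+b_k(N))}{2}$ at $\ell=k$ and the oscillation in the tail.
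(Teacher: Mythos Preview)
Your approach is essentially the same as the paper's --- apply Ostrowski's formula to $N$ and $N^*$, subtract, and estimate term by term --- and your treatment of the $\ell=k$ main term and of the $\ell=k+1$ term is correct. However, your tail estimate for $\ell\ge k+2$ has a genuine gap in Case~1.

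You bound the tail by $|N^*-N|\sum_{\ell\ge k+2}b_\ell\|q_\ell\alpha\|\ll |N^*-N|/q_{k+2}$, then invoke $|N^*-N|\le q_{k+1}$ to obtain $O(q_{k+1}/q_{k+2})=O(1/a_{k+2})$, and finally assert $O(1/a_{k+2})=O(|\delta|/a_{k+1})$. That last equality is simply false in Case~1 when $|\delta|$ is small: take for instance $a_{k+1}=100$, $a_{k+2}=1$, $\delta=1$, where $1/a_{k+2}=1$ but $|\delta|/a_{k+1}=1/100$. The bound $|N^*-N|\le q_{k+1}$ is only sharp in Case~2 (where indeed $|\delta|=b_k^*\asymp a_{k+1}$, so $O(1)$ errors are acceptable). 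In Case~1 you must instead use the exact value $|N^*-N|=|\delta|q_k$, which you already used correctly at $\ell=k+1$; then the tail is $\ll |\delta|q_k/q_{k+2}\le |\delta|q_k/q_{k+1}\le |\delta|/a_{k+1}$, and no appeal to $a_{k+2}$ is needed. This is exactly what the paper does (it in fact absorbs $\ell=k+1$ into the same tail sum starting at $\ell=k+1$).

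Two minor remarks: the ``alternating-sign identity'' you mention is never actually used in your argument --- the triangle inequality with $b_\ell\|q_\ell\alpha\|\le 1/q_\ell$ already suffices --- so your closing comment that the tail would only be $O(1)$ without the oscillation is not accurate. Also, your $\ell=k$ algebra via $b_k^*+b_k(N)=a_{k+1}-\delta+O(1)$ is a clean alternative to the paper's use of the identity $F(x)-F(y)=(x-y)^2+(x-y)(1-2x)$ for $F(x)=x(1-x)$; both routes give the same main term.
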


\begin{proof} Assume first, that $b_{k+1}(N)<a_{k+2}$. Then $N^*$ is obtained from $N$ by changing the Ostrowski digit $b_k(N)$ to $b_k^*$, and leaving all other Ostrowski digits intact. Applying Ostrowski's explicit formula in Lemma \ref{ostrowskilemma} to $N$ and $N^*$, we deduce
\begin{equation}\label{SN*-SNfirst}
\begin{split} S_{N^*}(\alpha ) - S_N (\alpha) = &(-1)^{k+1} \left( b_k^* \frac{1-b_k^* q_k \| q_k \alpha \|}{2} - b_k(N) \frac{1-b_k(N)q_k \| q_k \alpha \|}{2} \right) \\ &+(-1)^{k} (b_k^*-b_k(N)) \left( \| q_k \alpha \| \sum_{j=0}^{k-1} b_j(N) q_j + \frac{\| q_k \alpha \|}{2} \right) \\ &+\sum_{\ell =k+1}^{L-1} (-1)^{\ell} b_{\ell}(N) \| q_{\ell} \alpha \| (b_k^* -b_k(N)) q_k . \end{split}
\end{equation}
By the rules of the Ostrowski expansion, here $0 \le \sum_{j=0}^{k-1} b_j(N)q_j < q_k$. Therefore the second and the third line in \eqref{SN*-SNfirst} are negligible:
\[ |b_k^*-b_k(N)| \left( \| q_k \alpha \| \sum_{j=0}^{k-1} b_j(N) q_j + \frac{\| q_k \alpha \|}{2} \right) \ll \frac{|b_k^*-b_k(N)|}{a_{k+1}} , \]
and
\[ \left| \sum_{\ell =k+1}^{L-1} (-1)^{\ell} b_{\ell}(N) \| q_{\ell} \alpha \| (b_k^* -b_k(N)) q_k \right| \le |b_k^*-b_k(N)| q_k \sum_{\ell=k+1}^{L-1} \frac{1}{q_{\ell}} \ll \frac{|b_k^*-b_k(N)|}{a_{k+1}} . \]
Note that $b_k^* q_k \| q_k \alpha \| = 1/2+O(1/a_{k+1})$ by the definition of $b_k^*$. The polynomial $F(x)=x(1-x)$ satisfies the identity $F(x)-F(y)=(x-y)^2+(x-y)(1-2x)$, hence
\[ F(b_k^* q_k \| q_k \alpha \|) - F(b_k(N) q_k \| q_k \alpha \|) = (b_k^*-b_k(N))^2 q_k^2 \| q_k \alpha \|^2 + O \left( \frac{|b_k^*-b_k(N)| q_k \| q_k \alpha \|}{a_{k+1}} \right) , \]
and consequently in the first line in \eqref{SN*-SNfirst} we have
\[ \begin{split}  b_k^* \frac{1-b_k^* q_k \| q_k \alpha \|}{2} - b_k(N) \frac{1-b_k(N)q_k \| q_k \alpha \|}{2} &= \frac{F(b_k^* q_k \| q_k \alpha \|) - F(b_k(N) q_k \| q_k \alpha \|)}{2q_k \| q_k \alpha \|} \\ &= \frac{(b_k^* -b_k(N))^2 q_k \| q_k \alpha \|}{2} +O \left( \frac{|b_k^*-b_k(N)|}{a_{k+1}} \right) \\ &= \frac{(b_k^*-b_k(N))^2}{2 a_{k+1}} + O \left( \frac{|b_k^*-b_k(N)|}{a_{k+1}} \right) . \end{split} \]
This finishes the proof in the case $b_{k+1}(N)<a_{k+2}$.

Assume next, that $b_{k+1}(N)=a_{k+2}$. By the rules of the Ostrowski expansion, we necessarily have $b_k(N)=0$, thus $N^*$ is obtained from $N$ by decreasing the digit $b_{k+1}(N)=a_{k+2}$ by one, and changing $b_k(N)=0$ to $b_k^*$. We arrive at a legitimate Ostrowski expansion of $N^*$; in particular, $b_{\ell}(N^*)=b_{\ell}(N)$ for all $\ell \neq k,k+1$. Applying Ostrowski's explicit formula in Lemma \ref{ostrowskilemma} to $N$ and $N^*$, we deduce
\[ \begin{split} &S_{N^*}(\alpha) - S_N(\alpha) = \\ &(-1)^{k+1} b_k^* \left( \frac{1-b_k^* q_k \| q_k \alpha \|}{2} -\| q_k \alpha \| \sum_{j=0}^{k-1} b_j(N) q_j  - \frac{\| q_k \alpha \|}{2} \right) \\ &+(-1)^{k+2} \left( (a_{k+2}-1) \frac{1-(a_{k+2}-1) q_{k+1} \| q_{k+1} \alpha \|}{2} - a_{k+2} \frac{1-a_{k+2} q_{k+1} \| q_{k+1} \alpha \|}{2} \right) \\ &+(-1)^{k+2} \left( -(a_{k+2}-1) \| q_{k+1} \alpha \| \left( \sum_{j=0}^{k-1} b_j(N) q_j +b_k^* q_k + \frac{1}{2} \right) +a_{k+2} \| q_{k+1} \alpha \| \left( \sum_{j=0}^{k-1} b_j(N) q_j +\frac{1}{2} \right) \right) \\ &+\sum_{\ell=k+2}^{L-1} (-1)^{\ell} b_{\ell}(N) \| q_{\ell} \alpha \| (b_k^* q_k - q_{k+1}) . \end{split} \]
Straightforward computation shows that the first line in the previous formula is $(-1)^{k+1} a_{k+1}/8+O(1)$, and all other lines are $O(1)$.
\end{proof}

\begin{lem}\label{localcorollary} Let $0 \le k < K \le L$ be integers such that $a_{k+1} \ge A$ with a large universal constant $A>1$. If $p \neq \pm \infty$ and $k+1 \equiv \varepsilon_p \pmod{2}$, then
\[ \sum_{\substack{0 \le N <q_K \\ |b_k(N)-a_{k+1}/2| > \max \{ 10, 10/\sqrt{|p|} \} \sqrt{a_{k+1} \log a_{k+1}}}} e^{p S_N(\alpha )} \le a_{k+1}^{-48 \max \{ |p|, 1 \}} \sum_{0 \le N<q_K} e^{p S_N(\alpha)} . \]
If $k$ is odd, then
\[ \max_{\substack{0 \le N < q_K \\ |b_k(N)-a_{k+1}/2| > 10 \sqrt{a_{k+1} \log a_{k+1}}}} e^{S_N(\alpha)} \le a_{k+1}^{-48} \max_{0 \le N <q_K} e^{S_N(\alpha)}. \]
If $k$ is even, then
\[ \min_{\substack{0 \le N < q_K \\ |b_k(N)-a_{k+1}/2| > 10 \sqrt{a_{k+1} \log a_{k+1}}}} e^{S_N(\alpha)} \ge a_{k+1}^{48} \min_{0 \le N <q_K} e^{S_N(\alpha)} . \]
\end{lem}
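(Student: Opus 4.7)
The plan is to deduce Lemma \ref{localcorollary} from the local optimum estimate of Lemma \ref{localoptimumlemma} by pairing each ``bad'' index $N$ (one with $|b_k(N)-a_{k+1}/2|$ large) against its partner $N^*$ obtained by setting the $k$-th Ostrowski digit to $b_k^*=\lfloor a_{k+1}/2\rfloor$. The key algebraic observation is that the parity hypothesis $k+1\equiv \varepsilon_p \pmod 2$ is precisely equivalent to $\mathrm{sgn}(p)(-1)^{k+1}=+1$; equivalently, either $p>0$ and $k$ odd, or $p<0$ and $k$ even, and the analogous sign alignment holds in the $p=\pm\infty$ statements. Under this alignment, the leading quadratic term in Lemma \ref{localoptimumlemma} forces $p\bigl(S_{N^*}(\alpha)-S_N(\alpha)\bigr)$ to be positive and large precisely when $b_k(N)$ is far from $b_k^*$, which is exactly the mechanism we want to exploit.

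Quantitatively, writing $u=|b_k^*-b_k(N)|$, Lemma \ref{localoptimumlemma} reads
\begin{equation*}
\mathrm{sgn}(p)\bigl(S_{N^*}(\alpha)-S_N(\alpha)\bigr) = \frac{u^2}{2a_{k+1}} + O\!\left(\frac{u}{a_{k+1}}\right).
\end{equation*}
With the threshold $u\ge c\sqrt{a_{k+1}\log a_{k+1}}$ for $c=\max\{10,\,10/\sqrt{|p|}\}$, multiplying by $|p|$ gives
\begin{equation*}
p\bigl(S_{N^*}(\alpha)-S_N(\alpha)\bigr) \ge \frac{|p|c^2}{2}\log a_{k+1} - O\!\left(\frac{|p|c\sqrt{\log a_{k+1}}}{\sqrt{a_{k+1}}}\right) \ge 49\max\{|p|,1\}\log a_{k+1},
\end{equation*}
valid once $a_{k+1}\ge A$ for a sufficiently large universal $A$, because $|p|c^2/2=50\max\{|p|,1\}$ and the error is $o(1)$ times the main term. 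Exponentiating yields the pointwise ratio bound $e^{pS_N(\alpha)}\le a_{k+1}^{-49\max\{|p|,1\}}\,e^{pS_{N^*}(\alpha)}$ for every bad $N$ in the finite-$p$ case, and similarly $e^{\mathrm{sgn}(p) S_N(\alpha)} \le a_{k+1}^{-49} e^{\mathrm{sgn}(p) S_{N^*}(\alpha)}$ at $p=\pm\infty$.

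For the sum statement it remains to compare $\sum_{N\in\mathrm{bad}} e^{pS_{N^*}(\alpha)}$ with the full sum $\sum_{0\le N<q_K} e^{pS_N(\alpha)}$. Inspection of the two branches defining $N^*$ in Lemma \ref{localoptimumlemma} shows that any preimage of an index $M$ under $N\mapsto N^*$ must have $b_k(M)=b_k^*$ and must agree with $M$ in every Ostrowski digit outside positions $k$ and $k+1$; a short case analysis on the value of $b_{k+1}(M)$ then bounds the multiplicity of $N\mapsto N^*$ by $a_{k+1}+2$. Combining with the pointwise ratio gives
\begin{equation*}
\sum_{N\in\mathrm{bad}} e^{pS_N(\alpha)} \le a_{k+1}^{-49\max\{|p|,1\}}(a_{k+1}+2)\sum_{0\le N<q_K} e^{pS_N(\alpha)} \le a_{k+1}^{-48\max\{|p|,1\}}\sum_{0\le N<q_K} e^{pS_N(\alpha)},
\end{equation*}
after enlarging $A$ to absorb the factor $a_{k+1}+2$ into the extra $a_{k+1}$. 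No multiplicity argument is needed at $p=\pm\infty$: the pointwise ratio, together with $N^*\in[0,q_K)$, compares the restricted maximum (resp.\ minimum) to the global one with room to spare ($a_{k+1}^{-49}$ beats the required $a_{k+1}^{-48}$).

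The one place that genuinely requires care is the passage from Lemma \ref{localoptimumlemma} to the quantitative inequality above: the $O$-error carries a factor of $|p|$ and must be dominated uniformly in $p$, which is what forces the scaling $c=\max\{10,\,10/\sqrt{|p|}\}$ and the hypothesis that $a_{k+1}$ exceed a universal threshold $A$. Everything else reduces to routine bookkeeping with Ostrowski expansions and the sign of $(-1)^{k+1}$.
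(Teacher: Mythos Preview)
Your proof is correct and follows essentially the same approach as the paper: both arguments use Lemma \ref{localoptimumlemma} to compare each ``bad'' $N$ with its partner $N^*$, obtain the pointwise bound $e^{pS_N(\alpha)}\le a_{k+1}^{-49\max\{|p|,1\}}e^{pS_{N^*}(\alpha)}$, and then pay a factor of roughly $a_{k+1}$ for the multiplicity of $N\mapsto N^*$ (the paper phrases this as summing over the at most $a_{k+1}+1$ level sets $H_k(b)$, each of which injects into $H_k(b_k^*)$, while you bound the fiber size of $N\mapsto N^*$ directly by $a_{k+1}+2$). The only cosmetic difference is that the paper derives the $p=\pm\infty$ statements by letting $p\to\pm\infty$ in the finite-$p$ inequality, whereas you observe (correctly) that those cases follow immediately from the pointwise ratio without any multiplicity count.
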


\begin{proof} We give a detailed proof in the case $0<p<\infty$. The proof for $-\infty < p<0$ is entirely analogous, whereas the claims on the maximum and the minimum follow from letting $p \to \pm \infty$.

Assume thus that $0<p<\infty$, and that $k$ is odd. Set $Z=\sum_{0 \le N<q_K} e^{p S_N(\alpha)}$, and consider the sets
\[ \begin{split} H_k (b) &= \{ 0 \le N < q_K \, : \, b_k (N)=b \}, \\ H_k^*(0) &= \{ 0 \le N < q_K \, : \, b_k (N)=0, \,\, b_{k+1}(N)<a_{k+2} \} , \\ H_k^{**}(0) &= \{ 0 \le N < q_K \, : \, b_k (N)=0, \,\, b_{k+1}(N)=a_{k+2} \} . \end{split} \]
Let $|b- a_{k+1} /2|>\max \{ 10, 10/\sqrt{p} \} \sqrt{a_{k+1} \log a_{k+1}}$ and $b \neq 0$. Then the map $H_k(b) \to H_k(\lfloor a_{k+1}/2 \rfloor)$, $N \mapsto N+(\lfloor a_{k+1}/2 \rfloor -b)q_k$ is injective, and by Lemma \ref{localoptimumlemma},
\[ \sum_{N \in H_k(b)} e^{pS_N(\alpha )} \le \sum_{N \in H_k(\lfloor a_{k+1}/2 \rfloor)} e^{p(S_N(\alpha) - (b-a_{k+1}/2)^2/(2.001a_{k+1}))} \le a_{k+1}^{-49.9 \max \{ p, 1 \}} Z. \]
The map $H_k^*(0) \to H_k(\lfloor a_{k+1}/2 \rfloor)$, $N \mapsto N+\lfloor a_{k+1}/2 \rfloor q_k$ is injective, and by Lemma \ref{localoptimumlemma},
\[ \sum_{N \in H_k^*(0)} e^{p S_N(\alpha )} \le \sum_{N \in H_k(\lfloor a_{k+1}/2 \rfloor)} e^{p(S_N(\alpha) - (a_{k+1}/2)^2/(2.001a_{k+1}))} \le e^{-pa_{k+1}/8.004} Z. \]
The map $H_k^{**}(0) \to H_k(\lfloor a_{k+1}/2 \rfloor)$, $N \mapsto N + \lfloor a_{k+1}/2 \rfloor q_k - q_{k+1}$ is injective, and by Lemma \ref{localoptimumlemma},
\[ \sum_{N \in H_k^{**}(0)} e^{p S_N(\alpha )} \le \sum_{N \in H_k(\lfloor a_{k+1}/2 \rfloor)} e^{p(S_N(\alpha) - (a_{k+1}/2)^2/(4a_{k+1}))} \le e^{-pa_{k+1}/8.004} Z. \]
Note that $e^{-pa_{k+1}/8.004} \le a_{k+1}^{-49.9 \max \{ p,1 \}}$ provided that $|0-a_{k+1}/2|>\max \{ 10,10/\sqrt{p} \} \sqrt{a_{k+1} \log a_{k+1}}$. As the number of possible values of $b$ is at most $a_{k+1}-1$, the previous three formulas lead to
\[ \sum_{\substack{0 \le N <q_K \\ |b_k(N)- a_{k+1}/2|> \max \{ 10, 10/\sqrt{p} \} \sqrt{a_{k+1} \log a_{k+1}}}} e^{p S_N(\alpha)} \le (a_{k+1}+1) a_{k+1}^{-49.9 \max\{ p,1 \}} Z \le a_{k+1}^{-48 \max \{ p ,1 \}} Z. \]
\end{proof}

\subsection{Factorization of $J_p$}

In this section, we establish a factorization of $\sum_{0 \le N<q_K} e^{p S_N(\alpha)}$ into a product of two sums up to a small error. The main point of Lemma \ref{factorlemma} below is that the first main factor depends only on the first $k$ partial quotients of $\alpha$. In the special case of a rational $\alpha$ and $K=L$, we obtain a factorization of $J_p(\alpha)$.

\begin{lem}\label{factorlemma} Let $0 \le k < K \le L$ be integers such that $a_{k+1} \ge A \max \{ 1, \frac{1}{|p|} \log \frac{1}{|p|} \}$ with a large universal constant $A>1$. If $p \neq \pm \infty$ and $k+1 \equiv \varepsilon_p \pmod{2}$, then
\[ \begin{split} \Bigg( \sum_{0 \le N <q_K} e^{p S_N(\alpha)} \Bigg)^{1/p} = &\left( 1+O \left( \sqrt{\frac{\log a_{k+1}}{\min \{1,|p| \} a_{k+1}}} \right) \right) \Bigg( \sum_{0 \le N <q_k} e^{p (S_N(p_k/q_k) + (-1)^k N/(2q_k))} \Bigg)^{1/p}\\ &\times \Bigg( \sum_{\substack{0 \le N < q_K \\ b_0(N) = \cdots = b_{k-1}(N) =0 \\ |b_k(N)-a_{k+1}/2| \le \max \{ 10,10/\sqrt{|p|} \} \sqrt{a_{k+1} \log a_{k+1}}}} e^{p S_N(\alpha)} \Bigg)^{1/p} . \end{split} \]
If $k$ is odd, then
\[ \begin{split} \max_{0 \le N <q_K} e^{S_N (\alpha)} =  &\left( 1+O \left( \sqrt{\frac{\log a_{k+1}}{a_{k+1}}} \right) \right) \max_{0 \le N<q_k} e^{S_N(p_k/q_k) - N/(2q_k)} \\ &\times \max_{\substack{0 \le N <q_K \\ b_0(N)=\cdots =b_{k-1}(N)=0 \\ |b_k(N)-a_{k+1}/2| \le \max \{ 10,10/\sqrt{|p|} \} \sqrt{a_{k+1} \log a_{k+1}}}} e^{S_N(\alpha)} . \end{split} \]
If $k$ is even, then
\[ \begin{split} \min_{0 \le N <q_K} e^{S_N (\alpha)} = & \left( 1+O \left( \sqrt{\frac{\log a_{k+1}}{a_{k+1}}} \right) \right) \min_{0 \le N<q_k} e^{S_N(p_k/q_k) + N/(2q_k)} \\ &\times \min_{\substack{0 \le N <q_K \\ b_0(N)=\cdots =b_{k-1}(N)=0 \\ |b_k(N)-a_{k+1}/2| \le \max \{ 10,10/\sqrt{|p|} \} \sqrt{a_{k+1} \log a_{k+1}}}} e^{S_N(\alpha)} . \end{split} \]
All implied constants are universal.
\end{lem}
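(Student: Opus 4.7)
The plan is to decompose any $N\in[0,q_K)$ through its Ostrowski expansion as $N=N_1+N_2$, with ``low part'' $N_1=\sum_{\ell<k}b_\ell(N)q_\ell\in[0,q_k)$ and ``high part'' $N_2=\sum_{\ell\ge k}b_\ell(N)q_\ell$ (so $b_0(N_2)=\cdots=b_{k-1}(N_2)=0$). Reading Lemma \ref{ostrowskilemma} term by term, the $\ell<k$ portion of the formula reproduces $S_{N_1}(\alpha)$ exactly; for $\ell\ge k$ the inner sum $\sum_{j<\ell}b_j(N)q_j$ splits as $N_1+\sum_{k\le j<\ell}b_j(N_2)q_j$, producing $S_{N_2}(\alpha)$ plus a cross term linear in $N_1$:
\[ S_N(\alpha)=S_{N_1}(\alpha)+S_{N_2}(\alpha)-N_1\sum_{\ell\ge k}(-1)^{\ell+1}b_\ell(N_2)\|q_\ell\alpha\|. \]

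Next I would replace $S_{N_1}(\alpha)$ by $S_{N_1}(p_k/q_k)$: for $1\le n\le N_1<q_k$ the shift $|n(\alpha-p_k/q_k)|<1/q_{k+1}$ is strictly less than the distance $1/q_k$ of every $\{np_k/q_k\}\in\{1/q_k,\ldots,(q_k-1)/q_k\}$ to the nearest integer, so no wrap-around occurs and a direct telescoping gives $S_{N_1}(\alpha)-S_{N_1}(p_k/q_k)=\tfrac{N_1(N_1+1)}{2}(\alpha-p_k/q_k)=O(1/a_{k+1})$. For the cross term, the tail $\ell\ge k+1$ contributes at most $O(N_1/q_{k+1})=O(1/a_{k+1})$ since $b_\ell(N_2)\|q_\ell\alpha\|\le a_{\ell+1}/q_{\ell+1}\le 1/q_\ell$, while the identity $\|q_k\alpha\|=1/(q_{k+1}+q_k T^{k+1}\alpha)$ yields $b_k\|q_k\alpha\|=\tfrac{1}{2q_k}+O\bigl((b_k-a_{k+1}/2)/(q_ka_{k+1})\bigr)+O(1/(q_ka_{k+1}))$. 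Restricting via Lemma \ref{localcorollary} to those $N$ with $|b_k(N)-a_{k+1}/2|\le c:=\max\{10,10/\sqrt{|p|}\}\sqrt{a_{k+1}\log a_{k+1}}$, I obtain the pointwise identity
\[ S_N(\alpha)=\bigl[S_{N_1}(p_k/q_k)+(-1)^k N_1/(2q_k)\bigr]+S_{N_2}(\alpha)+E(N_1,N_2),\qquad |E|=O(c/a_{k+1}). \]

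The main term is now a sum of an $N_1$-only and an $N_2$-only function, so the sum of $e^{pS_N(\alpha)}$ over the restricted index set literally factors into the product of the two sums appearing in the statement, modulated only by $e^{pE}=1+O(|p||E|)$. After taking the $1/p$-th root the relative error collapses from $|p||E|$ down to $|E|$, yielding the stated bound $\sqrt{\log a_{k+1}/(\min\{1,|p|\}\,a_{k+1})}$. The $N$'s outside the restriction contribute, by Lemma \ref{localcorollary}, at most an $a_{k+1}^{-48\max\{|p|,1\}}$-fraction of the whole, comfortably inside the same error budget. The maximum and minimum versions follow by passing $p\to\pm\infty$: on the restricted index set each factor is optimised separately, since the main term splits additively in $N_1$ and $N_2$.

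Two points will require care. A minor nuisance: the parametrisation $N=N_1+N_2$ is not a full product, because Ostrowski's uniqueness rule forces $b_{k-1}(N_1)=0$ whenever $b_k(N_2)=a_{k+1}$; these ``forbidden'' pairs must be folded into the same $\ell=k+1$ tail error. The main genuine obstacle is keeping $|p||E|$ bounded as $|p|$ grows: the raw bound $|E|=O(c/a_{k+1})$ gives $|p||E|\asymp|p|\sqrt{\log a_{k+1}/a_{k+1}}$, which for moderately large $|p|$ is not automatically $o(1)$ under the stated hypothesis on $a_{k+1}$. I expect this to be resolved by exploiting the Gaussian-scale concentration $|b_k-a_{k+1}/2|\lesssim\sqrt{a_{k+1}/|p|}$ implicit in the quadratic main term of Lemma \ref{localoptimumlemma}, which replaces the worst-case linear-in-$(b_k-a_{k+1}/2)$ cross-term error by a smaller quantity and delivers the claimed uniform bound.
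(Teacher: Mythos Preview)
Your approach is exactly the paper's: split $N=N_1+N_2$ via Ostrowski digits, use Lemma~\ref{localcorollary} to restrict $b_k$, expand the cross term as $(-1)^k N_1/(2q_k)+O(E)$, factor the restricted sum, and replace $\alpha$ by $p_k/q_k$ in the $N_1$-factor. Both of the concerns you raise are, however, phantoms.

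\textbf{Concern 1 (forbidden pairs).} After restricting to $|b_k-a_{k+1}/2|\le c$, the value $b_k=a_{k+1}$ is excluded because $a_{k+1}/2>c$ under the hypothesis on $a_{k+1}$. Hence the Ostrowski constraint linking $b_{k-1}$ to $b_k$ is vacuous, and $N\mapsto(N_1,N_2)$ is a genuine bijection from the restricted set onto the full Cartesian product $[0,q_k)\times\{N_2:\,b_0=\cdots=b_{k-1}=0,\ |b_k-a_{k+1}/2|\le c\}$. No pairs need to be folded into any error.

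\textbf{Concern 2 (large $|p|$).} You do not need $|p|E$ bounded. The mistake is in writing the termwise error as $e^{pE(N)}=1+O(|p||E|)$ before summing. Instead, keep it multiplicative: since $|E(N)|\le E'$, every term satisfies $e^{-|p|E'}\le e^{pE(N)}\le e^{|p|E'}$, hence the restricted sum is sandwiched between $e^{-|p|E'}$ and $e^{|p|E'}$ times the exact product. Raising to the power $1/p$ collapses this to a factor between $e^{-E'}$ and $e^{E'}$, which equals $1+O(E')$ because $E'=O\bigl(\sqrt{\log a_{k+1}/(\min\{1,|p|\}a_{k+1})}\bigr)$ is bounded (indeed small) under the stated hypothesis $a_{k+1}\ge A\max\{1,\tfrac{1}{|p|}\log\tfrac{1}{|p|}\}$. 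No Gaussian-scale refinement is needed; the error budget is already uniform in $p$.
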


\noindent We mention that the condition $|b_k(N)-a_{k+1}/2| \le \max \{ 10,10/\sqrt{|p|} \} \sqrt{a_{k+1} \log a_{k+1}}$ in the summations could be removed using a straightforward modification of Lemma \ref{localcorollary}, but we will not need this fact. We give the proof after a preparatory lemma.
\begin{lem}\label{SNlemma} Let $0 \le N <q$ be an integer with Ostrowski expansion $N=\sum_{\ell=0}^{L-1} b_{\ell}(N) q_{\ell}$. Let $0 \le k <L$, and set $N_1=\sum_{\ell=0}^{k-1} b_{\ell}(N) q_{\ell}$ and $N_2=\sum_{\ell =k}^{L-1} b_{\ell}(N) q_{\ell}$. Then
\[ S_N(\alpha) = S_{N_1}(\alpha) + S_{N_2} (\alpha) + (-1)^k b_k(N) \| q_k \alpha \| N_1 + O \left( \frac{1}{a_{k+1}} \right) \]
with a universal implied constant.
\end{lem}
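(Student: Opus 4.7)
The plan is to apply Ostrowski's formula (Lemma \ref{ostrowskilemma}) separately to $N$, $N_1$, and $N_2$, and then compare the three expressions term by term in the index $\ell$. The identity should fall out of direct bookkeeping, with the stated error term arising from a geometrically decaying tail.

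First I would verify that $N_1$ and $N_2$ have the natural Ostrowski expansions one expects, namely $b_\ell(N_1) = b_\ell(N)$ for $\ell<k$ and $b_\ell(N_1)=0$ for $\ell\ge k$, and dually for $N_2$. The admissibility rule ``$b_{\ell+1}=a_{\ell+2}$ forces $b_\ell=0$'' is inherited for both, the only point to check being the junction at $\ell=k$ in $N_2$, where if $b_k(N)=a_{k+1}$ then $b_{k-1}(N_2)=0$ automatically. In particular $N_1 < q_k$ by the usual Ostrowski bound.

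Next I substitute into Lemma \ref{ostrowskilemma}. For $\ell<k$ the summand of $S_N(\alpha)$ depends only on $b_0(N),\ldots,b_\ell(N)$, which coincide with $b_0(N_1),\ldots,b_\ell(N_1)$; thus these terms cancel exactly against $S_{N_1}(\alpha)$. For $\ell\ge k$ the summand of $S_N(\alpha)$ differs from the corresponding summand of $S_{N_2}(\alpha)$ only in the inner sum $\sum_{j=0}^{\ell-1} b_j q_j$: in $S_N$ this equals $N_1+\sum_{j=k}^{\ell-1}b_j(N)q_j$, whereas in $S_{N_2}$ the first $k$ digits vanish. The difference produces an extra factor of $-\|q_\ell\alpha\|\,N_1$ multiplied by $(-1)^{\ell+1}b_\ell(N)$, so
\[ S_N(\alpha)-S_{N_1}(\alpha)-S_{N_2}(\alpha) = N_1 \sum_{\ell=k}^{L-1}(-1)^\ell b_\ell(N)\|q_\ell\alpha\|. \]

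Finally I peel off the $\ell=k$ term as the main term $(-1)^k b_k(N)\|q_k\alpha\|\,N_1$. For the tail $\ell>k$ I use the uniform bound $b_\ell(N)\|q_\ell\alpha\|\le a_{\ell+1}/(a_{\ell+1}q_\ell)=1/q_\ell$ together with $N_1<q_k$ and the geometric growth $q_{\ell+1}\ge a_{\ell+1}q_\ell$, giving
\[ \left| N_1 \sum_{\ell=k+1}^{L-1}(-1)^\ell b_\ell(N)\|q_\ell\alpha\| \right| \le q_k \sum_{\ell=k+1}^{L-1}\frac{1}{q_\ell} \le \frac{2q_k}{q_{k+1}} \le \frac{2}{a_{k+1}}, \]
which is the asserted $O(1/a_{k+1})$. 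There is no real obstacle here beyond careful bookkeeping; the only mildly delicate point is the inner-sum comparison at indices $\ell\ge k$, and the border cases $k=0$ (where $N_1=0$ and the statement is trivial) and $a_1=1$ (where $\|q_0\alpha\|$ escapes the generic bound but does not enter because $b_0$-terms vanish), both of which are handled directly.
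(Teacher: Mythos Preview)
Your proposal is correct and follows essentially the same argument as the paper: split Ostrowski's formula for $S_N(\alpha)$ at $\ell=k$, identify the lower part as $S_{N_1}(\alpha)$, observe that the upper part differs from $S_{N_2}(\alpha)$ only via the extra $N_1$ in the inner sum, and bound the tail $\ell\ge k+1$ by $q_k\sum_{\ell\ge k+1}1/q_\ell\ll 1/a_{k+1}$. The only cosmetic difference is that you are a bit more explicit about the admissibility of the Ostrowski expansions of $N_1,N_2$ and about the edge cases $k=0$ and $a_1=1$; your tail constant $2$ is slightly optimistic (the geometric bound gives a small universal constant, not exactly $2$), but this is irrelevant for the $O(1/a_{k+1})$ conclusion.
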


\begin{proof} Apply Ostrowski's explicit formula in Lemma \ref{ostrowskilemma} to $N$, and consider the sum over $0 \le \ell \le k-1$ and $k \le \ell <L$ separately. The sum over $0 \le \ell \le k-1$ is precisely $S_{N_1}(\alpha)$. For $k \le \ell <L$ we have
\[ \sum_{j=0}^{\ell -1} b_j(N) q_j = \sum_{j=0}^{k-1} b_j(N) q_j + \sum_{j=k}^{\ell -1} b_j(N) q_j = N_1 + \sum_{j=0}^{\ell -1} b_j(N_2) q_j , \]
hence
\[ S_N(\alpha) = S_{N_1} (\alpha) + S_{N_2} (\alpha ) + \sum_{\ell=k}^{L-1} (-1)^{\ell} b_{\ell}(N) \| q_{\ell} \alpha \| N_1 . \]
Since $N_1<q_k$, the terms $k+1 \le \ell <L$ in the previous formula satisfy
\[ \left| \sum_{\ell=k+1}^{L-1} (-1)^{\ell}  b_{\ell}(N) \| q_{\ell} \alpha \| N_1 \right| \le \sum_{\ell=k+1}^{L-1} \frac{q_k}{q_{\ell}} \ll \frac{1}{a_{k+1}}, \]
and the claim follows.
\end{proof}

\begin{proof}[Proof of Lemma \ref{factorlemma}] It is enough prove the lemma for finite $p$. The claims on the maximum and the minimum then follow from letting $p \to \pm \infty$.

Lemma \ref{localcorollary} shows that
\begin{equation}\label{corollaryfactorization}
\sum_{0 \le N <q_K} e^{p S_N(\alpha)} = \left( 1+O \left( a_{k+1}^{-48 \max \{ |p|,1 \}} \right) \right) \sum_{\substack{0 \le N <q_K \\ |b_k(N) - a_{k+1}/2| \le \max \{ 10,10/\sqrt{|p|} \} \sqrt{a_{k+1} \log a_{k+1}}}} e^{p S_N(\alpha)} .
\end{equation}

Let $N_1, N_2$ be as in Lemma \ref{SNlemma}. The map $N \mapsto (N_1, N_2)$ is a bijection from
\[ \left\{ 0 \le N <q_K \, : \, |b_k(N) - a_{k+1}/2| \le \max \{ 10, 10/\sqrt{|p|} \} \sqrt{a_{k+1} \log a_{k+1}} \right\} \]
to the product set
\[ [0,q_k) \times \left\{ 0 \le N < q_K : \begin{array}{c} b_0(N)=\cdots=b_{k-1}(N)=0, \\ |b_k(N) - a_{k+1}/2| \le \max \{ 10, 10/\sqrt{|p|} \} \sqrt{a_{k+1} \log a_{k+1}} \end{array} \right\} . \]
For every such $N$,
\[ \begin{split} (-1)^k b_k(N) \| q_k \alpha \| N_1 &= (-1)^k \frac{a_{k+1}}{2} \| q_k \alpha \| N_1 + O \left( \max \{ 1,1/\sqrt{|p|} \} \sqrt{a_{k+1} \log a_{k+1}} \| q_k \alpha \| q_k \right) \\ &= (-1)^k \frac{N_1}{2q_k} + O \left( \max \{ 1,1/\sqrt{|p|} \} \sqrt{\frac{\log a_{k+1}}{a_{k+1}}} \right) . \end{split} \]
Therefore by Lemma \ref{SNlemma},
\[ S_N(\alpha) = S_{N_1}(\alpha) +S_{N_2}(\alpha) + (-1)^k \frac{N_1}{2 q_k} + O \left( \sqrt{\frac{\log a_{k+1}}{\min \{ 1,|p| \} a_{k+1}}} \right) , \]
and consequently
\[ \begin{split} \sum_{\substack{0 \le N<q_K \\ |b_k(N) -a_{k+1}/2| \le \max \{ 10, 10/\sqrt{|p|} \} \sqrt{a_{k+1} \log a_{k+1}}}} e^{p S_N(\alpha)} = & \sum_{0 \le N<q_k} e^{p (S_N(\alpha) + (-1)^k N/(2q_k))} \\ &\times \sum_{\substack{0 \le N <q_K \\ b_0(N)=\cdots=b_{k-1}(N)=0 \\ |b_k(N)-a_{k+1}/2| \le \max \{ 10, 10/\sqrt{|p|} \} \sqrt{a_{k+1} \log a_{k+1}}}} e^{pS_N(\alpha)} \\ &\times \exp \left( O \left( |p| \sqrt{\frac{\log a_{k+1}}{\min \{ 1,|p| \} a_{k+1}}} \right) \right) . \end{split} \]
Substituting this in \eqref{corollaryfactorization} gives
\[ \begin{split} \Bigg( \sum_{0 \le N <q_K} e^{p S_N(\alpha)} \Bigg)^{1/p} = &\left( 1+O \left( \sqrt{\frac{\log a_{k+1}}{\min \{1,|p| \} a_{k+1}}} \right) \right) \Bigg( \sum_{0 \le N <q_k} e^{p (S_N(\alpha) + (-1)^k N/(2q_k))} \Bigg)^{1/p}\\ &\times \Bigg( \sum_{\substack{0 \le N < q_K \\ b_0(N) = \cdots = b_{k-1}(N) =0 \\ |b_k(N)-a_{k+1}/2| \le \max \{ 10,10/\sqrt{|p|} \} \sqrt{a_{k+1} \log a_{k+1}}}} e^{p S_N(\alpha)} \Bigg)^{1/p} . \end{split} \]

It remains to replace $\alpha$ by $p_k/q_k$ in the first main factor in the previous formula. For any $1 \le n<q_k$, we have $|n\alpha -np_k/q_k| = (n/q_k) \| q_k \alpha \| <1/q_k$, and $np_k/q_k$ is not an integer. In particular, there is no integer between $n \alpha$ and $np_k/q_k$, so
\[ \{ n \alpha \} - \left\{ \frac{np_k}{q_k} \right\} = n \alpha - \frac{np_k}{q_k} = \frac{n}{q_k} (-1)^k \| q_k \alpha \|. \]
Therefore for any $0 \le N<q_k$,
\begin{equation}\label{SNalpha-SNpkqk}
S_N(\alpha) - S_N (p_k/q_k) = \sum_{n=1}^N \frac{n}{q_k} (-1)^k \| q_k \alpha \| = O \left( \frac{1}{a_{k+1}} \right) .
\end{equation}
Replacing $\alpha$ by $p_k/q_k$ thus introduces a negligible multiplicative error $1+O(1/a_{k+1})$.
\end{proof}

\subsection{The matching lemma}\label{matchingsection}

Assume now that $\alpha \in (0,1)$, and recall that we write its continued fraction expansion in the form $\alpha = [0;a_1,a_2,\ldots, a_L]$ (if $\alpha \in \mathbb{Q}$) or $\alpha = [0;a_1,a_2,\ldots ]$ (if $\alpha \not\in \mathbb{Q}$), with convergents $p_{\ell}/q_{\ell} = [0;a_1,a_2,\ldots, a_{\ell}]$. Let $\alpha'=T^2 \alpha$, where $T^2$ is the second iterate of the Gauss map $T$. Then $\alpha'=[0;a_3,a_4,\ldots, a_L]$ if $\alpha \in \mathbb{Q}$, with the convention that $\alpha'=0$ if $L \le 2$, and $\alpha'=[0;a_3,a_4,\ldots]$ if $\alpha \not\in \mathbb{Q}$. Let $q'$ denote the denominator of $\alpha'$ in its reduced form if $\alpha \in \mathbb{Q}$, and let $q'=\infty$ if $\alpha \not\in \mathbb{Q}$. Let $p_{\ell}'/q_{\ell}'=[0;a_3,a_4,\ldots, a_{\ell}]$, $3 \le \ell <L+1$ and $p_2'=0$, $q_2'=1$ denote the convergents to $\alpha'$. The Ostrowski expansion of integers $0 \le N <q'$ with respect to $\alpha'$ will be written as $N=\sum_{\ell=2}^{L-1} b_{\ell}'(N) q_{\ell}'$. Note that $0 \le b_2'(N)<a_3$ and $0 \le b_{\ell}'(N) \le a_{\ell +1}$, $3 \le \ell <L$.

Given an integer $0 \le N <q$ with Ostrowski expansion $N=\sum_{\ell=0}^{L-1} b_{\ell}(N) q_{\ell}$ with respect to $\alpha$ such that $b_2(N)<a_3$, define $N'=\sum_{\ell =2}^{L-1} b_{\ell}(N) q_{\ell}'$. Note that this is a legitimate Ostrowski expansion with respect to $\alpha'$, that is, $b_{\ell}'(N')=b_{\ell}(N)$ for all $2 \le \ell <L$. The map $N \mapsto N'$, from $\{ 0 \le N <q \, : \, b_2(N)<a_3 \}$ to $[0,q')$ is surjective but not injective (as it forgets the digits $b_0(N)$ and $b_1(N)$), and provides a natural way to match certain terms of the sum $\sum_{0 \le N <q} e^{pS_N(\alpha)}$ to terms of the sum $\sum_{0 \le N<q'} e^{p S_N (\alpha')}$. By comparing $S_N(\alpha)$ to $S_{N'}(\alpha')$, the following ``matching lemma'' is a key ingredient in the study of the function $h_p$.
\begin{lem}\label{matchinglemma} Let $0 \le N <q$ be an integer with Ostrowski expansion $N=\sum_{\ell=0}^{L-1}b_{\ell}(N) q_{\ell}$ with respect to $\alpha$ such that $b_2(N)<a_3$. Then
\[ S_N(\alpha) - S_{N'}(\alpha') = \sum_{\ell=0}^1 (-1)^{\ell+1} b_{\ell}(N) \left( \frac{1-b_{\ell}(N) q_{\ell} \| q_{\ell} \alpha \|}{2} - \| q_{\ell} \alpha \| \sum_{j=0}^{\ell-1} b_j(N) q_j - \frac{\| q_{\ell} \alpha \|}{2} \right) +O(1) . \]
If in addition $b_0(N)=\cdots =b_{k-1}(N)=0$ with some $k \ge 2$, then
\[ S_N(\alpha) - S_{N'}(\alpha') = a_1 \frac{(-1)^{k+1}p_k b_k(N)/a_{k+1} -(b_k(N)/a_{k+1})^2}{2 q_k q_k'} + O \left( \frac{1}{q_{k+1}'} \right) . \]
The implied constants are universal.
\end{lem}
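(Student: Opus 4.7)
My plan is to apply Ostrowski's formula (Lemma \ref{ostrowskilemma}) separately to $S_N(\alpha)$ and $S_{N'}(\alpha')$ and compare the two expansions term by term, using the bijection $b'_\ell(N')=b_\ell(N)$ for $\ell\geq 2$. Writing these as $S_N(\alpha)=\sum_{\ell=0}^{L-1}T_\ell$ and $S_{N'}(\alpha')=\sum_{\ell=2}^{L-1}T'_\ell$, the $\ell\in\{0,1\}$ contributions occur only in $S_N(\alpha)$ and form exactly the displayed expression in the first part of the lemma. So the whole task reduces to estimating $\sum_{\ell\geq 2}(T_\ell-T'_\ell)$.

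The key inputs are continued-fraction identities relating the convergents of $\alpha$ and $\alpha'=T^2\alpha$ for $\ell\geq 2$. From the matrix factorization
\[\begin{pmatrix}p_\ell & p_{\ell-1}\\ q_\ell & q_{\ell-1}\end{pmatrix}=\begin{pmatrix}p_2 & p_1\\ q_2 & q_1\end{pmatrix}\begin{pmatrix}q'_\ell & q'_{\ell-1}\\ p'_\ell & p'_{\ell-1}\end{pmatrix}\]
one reads off $q_\ell=q_2q'_\ell+q_1 p'_\ell$ and $p_\ell=a_2 q'_\ell+p'_\ell$. Combined with $q_\ell\alpha-p_\ell=(-1)^\ell/(q_\ell\alpha_{\ell+1}+q_{\ell-1})$ and the same formula for $\alpha'$ (both involving the shared complete quotient $\alpha_{\ell+1}$), this gives
\[q_\ell\alpha-p_\ell=\frac{q'_\ell\alpha'-p'_\ell}{q_2+q_1\alpha'},\qquad q_\ell\|q_\ell\alpha\|-q'_\ell\|q'_\ell\alpha'\|=\frac{(-1)^{\ell+1}q_1\|q'_\ell\alpha'\|^2}{q_2+q_1\alpha'}\]
for $\ell\geq 3$; the potential discrepancy at $\ell=2$ (when $\alpha'>1/2$, so $\|q'_2\alpha'\|\neq q'_2\alpha'-p'_2$) is harmless since then $a_3=1$, forcing $b_2(N)=0$ and hence $T_2=T'_2=0$. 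Substituting into $T_\ell-T'_\ell$ and using $\sum_{j<\ell}b_j(N)q_j<q_\ell$, each such difference is $O(\|q'_\ell\alpha'\|)$, and summing the resulting geometric series yields $O(1)$. This proves the first part.

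For the refined statement, the assumption $b_0=\cdots=b_{k-1}=0$ kills all inner partial sums below index $k$, so $T_\ell=T'_\ell=0$ for $\ell<k$, and at $\ell=k$ the difference collapses to
\[T_k-T'_k=\tfrac{1}{2}(-1)^{k+1}b_k^2\bigl(q_k\|q_k\alpha\|-q'_k\|q'_k\alpha'\|\bigr)+\tfrac{1}{2}(-1)^{k+1}b_k\bigl(\|q_k\alpha\|-\|q'_k\alpha'\|\bigr).\]
Expanding $q_k\|q_k\alpha\|=1/(\alpha_{k+1}+q_{k-1}/q_k)$ to isolate the factor $1/a_{k+1}$, and substituting $p_k=a_2 q'_k+p'_k$ and $q_k=q_2 q'_k+q_1 p'_k$, converts the two pieces into the advertised $(b_k/a_{k+1})^2$ and $(-1)^{k+1}p_k b_k/a_{k+1}$ combinations with common prefactor $a_1/(2q_kq'_k)$. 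For $\ell>k$ the new ingredient is the ``cross-difference''
\[\|q_\ell\alpha\|\sum_{j=k}^{\ell-1}b_j q_j-\|q'_\ell\alpha'\|\sum_{j=k}^{\ell-1}b_j q'_j=\frac{q_1\|q'_\ell\alpha'\|}{q_2+q_1\alpha'}\sum_{j=k}^{\ell-1}(-1)^{j+1}b_j\|q'_j\alpha'\|,\]
which sums geometrically in $\ell$ to $O(1/q'_{k+1})$.

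The main obstacle lies in this last step: the claimed leading term has small magnitude (of order $a_1/(q_k q'_k)$) and the error bound $O(1/q'_{k+1})$ is correspondingly sharp, so the proof must track the \emph{exact} coefficients and signs through the CF identities rather than rely on crude order-of-magnitude bounds. Combining the $\ell=k$ contribution with the tail from $\ell>k$ without producing spurious cross-terms is the delicate bookkeeping at the heart of the argument.
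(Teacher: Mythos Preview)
Your proposal is correct and follows essentially the same route as the paper: both arguments subtract the two Ostrowski expansions termwise, use the linear relations between $(p_\ell,q_\ell)$ and $(p'_\ell,q'_\ell)$ together with the shared complete quotient $\alpha_{\ell+1}$ to control $q_j\|q_\ell\alpha\|-q'_j\|q'_\ell\alpha'\|$, and then isolate the $\ell=k$ term exactly for the refined statement. Your matrix factorization giving $q_\ell=q_2q'_\ell+q_1p'_\ell$ and the clean identity $\|q_\ell\alpha\|=\|q'_\ell\alpha'\|/(q_2+q_1\alpha')$ are equivalent to the paper's relations $q'_\ell=q_\ell-a_1p_\ell$ and its use of $R=[a_{\ell+1};a_{\ell+2},\ldots]$.

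Two small bookkeeping points to fix when you write this out in full. First, the per-term bound for $\ell\ge 2$ is $O(1/q'_\ell)$ rather than $O(\|q'_\ell\alpha'\|)$: the piece $b_\ell\|q_\ell\alpha\|\sum_{j=0}^{1}b_j q_j$ carries an extra factor $b_\ell\le a_{\ell+1}$, so it is $O(a_{\ell+1}\|q'_\ell\alpha'\|)=O(1/q'_\ell)$; this is still summable and the $O(1)$ conclusion stands. Second, in your display for $T_k-T'_k$ the sign should be $(-1)^{k}$, not $(-1)^{k+1}$, since the $b_k^2$ piece in Ostrowski's formula comes with a minus sign inside the bracket; equivalently, the paper writes the same expression with the primed and unprimed quantities swapped. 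Once this sign is tracked correctly through your identity $q_k\|q_k\alpha\|-q'_k\|q'_k\alpha'\|=(-1)^{k+1}q_1\|q'_k\alpha'\|^2/(q_2+q_1\alpha')$, the advertised leading term $a_1\bigl((-1)^{k+1}p_kb_k/a_{k+1}-(b_k/a_{k+1})^2\bigr)/(2q_kq'_k)$ drops out as stated.
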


\begin{proof} Since $p_{\ell}', q_{\ell}'$ satisfy the same second order linear recursion of which $p_{\ell}, q_{\ell}$ are linearly independent solutions, they are linear combinations of $p_{\ell}, q_{\ell}$. Indeed, one readily checks that
\begin{equation}\label{pell'qell'}
p_{\ell}'= (a_1 a_2 +1) p_{\ell} - a_2 q_{\ell} \quad \textrm{and} \quad q_{\ell}'=q_{\ell} -a_1 p_{\ell} \quad \textrm{for all } 2 \le \ell < L+1 .
\end{equation}
Now let $2 \le j \le \ell <L$ be integers. We claim that if either $\ell \ge 3$, or $\ell=2$ and $a_3>1$, then
\begin{equation}\label{qjdeltaell-qj'deltaell'}
\left| q_j \| q_{\ell} \alpha \| - q_j' \| q_{\ell}' \alpha' \| \right| \le \frac{2a_1}{q_{j+1} q_{\ell+1}'} .
\end{equation}
Set $R=[a_{\ell+1};a_{\ell+2}, \ldots, a_L]$ resp.\ $R=[a_{\ell+1}; a_{\ell+2},\ldots]$ if $\alpha \in \mathbb{Q}$ resp.\ $\alpha \not\in \mathbb{Q}$. A classical identity of continued fractions states that $\| q_{\ell} \alpha \| = 1/(R q_{\ell}+q_{\ell-1})$ and $\| q_{\ell}' \alpha' \| = 1/(Rq_{\ell}'+q_{\ell-1}')$. Formula \eqref{pell'qell'} thus leads to
\[ q_j \| q_{\ell} \alpha \| - q_j' \| q_{\ell}' \alpha' \| = a_1 \frac{R q_j q_{\ell} \left( \frac{p_j}{q_j} - \frac{p_{\ell}}{q_{\ell}} \right) + q_j q_{\ell-1} \left( \frac{p_j}{q_j} - \frac{p_{\ell-1}}{q_{\ell-1}} \right)}{(Rq_{\ell}+q_{\ell-1})(Rq_{\ell}'+q_{\ell-1}')} . \]
Observe that $R \ge a_{\ell+1}$, and recall the identity $|q_{\ell} p_{\ell-1} - q_{\ell -1} p_{\ell}|=1$. If $j=\ell$, we thus have
\[ | q_{\ell} \| q_{\ell} \alpha \| - q_{\ell}' \| q_{\ell}' \alpha' \| | = a_1 \frac{1}{(R q_{\ell}+q_{\ell-1})(R q_{\ell}'+q_{\ell-1}')} \le \frac{a_1}{q_{\ell+1} q_{\ell+1}'}, \]
as claimed. If $j=\ell-1$, then
\[ | q_{\ell-1} \| q_{\ell} \alpha \| - q_{\ell-1}' \| q_{\ell}' \alpha' \| | = a_1 \frac{R}{(R q_{\ell}+q_{\ell-1})(R q_{\ell}'+q_{\ell-1}')} \le \frac{a_1}{q_{\ell} q_{\ell+1}'}, \]
as claimed. If $j \le \ell -2$, we can use $|p_j/q_j - p_{\ell}/q_{\ell}| \le 2 |\alpha - p_j/q_j|$ and $|p_j/q_j - p_{\ell-1}/q_{\ell-1}| \le 2 |\alpha - p_j/q_j|$ to deduce
\[ | q_j \| q_{\ell} \alpha \| - q_j' \| q_{\ell}' \alpha' \| | \le a_1 \frac{(R q_j q_{\ell} + q_j q_{\ell-1}) 2 \left| \alpha - \frac{p_j}{q_j} \right|}{(R q_{\ell}+q_{\ell-1})(R q_{\ell}'+q_{\ell-1}')} = \frac{2a_1 \| q_j \alpha \|}{Rq_{\ell}'+q_{\ell-1}'} \le \frac{2a_1}{q_{j+1} q_{\ell+1}'}, \]
as claimed. This finishes the proof of \eqref{qjdeltaell-qj'deltaell'}.

We now prove the lemma. Since $b_{\ell}'(N')=b_{\ell}(N)$ for all $2 \le \ell <L$, Ostrowski's explicit formula in Lemma \ref{ostrowskilemma} gives
\[ S_{N'}(\alpha') = \sum_{\ell=2}^{L-1} (-1)^{\ell+1} b_{\ell}(N) \left( \frac{1-b_{\ell}(N) q_{\ell}' \| q_{\ell}' \alpha' \|}{2} - \| q_{\ell}' \alpha' \| \sum_{j=2}^{L-1} b_j(N) q_j' - \frac{\| q_{\ell}' \alpha' \|}{2} \right) . \]
Consequently,
\[ \begin{split} S_N(\alpha) - S_{N'}(\alpha') = &\sum_{\ell=0}^1 (-1)^{\ell+1} b_{\ell}(N) \left( \frac{1-b_{\ell}(N) q_{\ell} \| q_{\ell} \alpha \|}{2} - \| q_{\ell} \alpha \| \sum_{j=0}^{\ell-1} b_j(N) q_j - \frac{\| q_{\ell} \alpha \|}{2} \right) \\ &+\sum_{\ell=2}^{L-1} (-1)^{\ell+1} b_{\ell}(N) \Bigg( \frac{b_{\ell}(N) (q_{\ell}' \| q_{\ell}' \alpha' \| - q_{\ell} \| q_{\ell} \alpha \|)}{2} -\| q_{\ell} \alpha \| \sum_{j=0}^1 b_j(N) q_j \\ &\hspace{40mm}+ \sum_{j=2}^{\ell-1} b_j(N) \left( q_j' \| q_{\ell}' \alpha' \| - q_j \| q_{\ell} \alpha \| \right) + \frac{\| q_{\ell}' \alpha' \| - \| q_{\ell} \alpha \|}{2} \Bigg) . \end{split} \]
By the estimate \eqref{qjdeltaell-qj'deltaell'} and the fact that $q_{\ell+1} \ge q_2 q_{\ell+1}'$ (which can be seen e.g.\ by induction), the absolute value of the sum over $2 \le \ell <L$ in the previous formula is at most
\[ \sum_{\ell=2}^{L-1} a_{\ell+1} \left( \frac{a_{\ell+1} a_1}{q_{\ell +1} q_{\ell+1}'}+ \frac{q_2}{q_{\ell+1}} + \sum_{j=2}^{\ell-1} a_{j+1} \frac{2a_1}{q_{j+1} q_{\ell+1}'} + \frac{1}{q_{\ell+1}'} + \frac{1}{q_{\ell+1}} \right) \ll \sum_{\ell=2}^{L-1} \frac{1}{q_{\ell}'} \ll 1 . \]
This finishes the proof of the first claim.

If $b_0(N)=\cdots =b_{k-1}(N)=0$ with some $k \ge 2$, then the terms $\ell \le k-1$ are all zero, and the contribution of the terms $k+1 \le \ell <L$ is similarly seen to be $\sum_{\ell=k+1}^{L-1}1/q_{\ell}' \ll 1/q_{k+1}'$. Finally, the $\ell=k$ term is
\[ (-1)^{k+1} b_k(N) \left( \frac{b_k(N) (q_k' \| q_k' \alpha' \| - q_k \| q_k \alpha \|)}{2} + \frac{\| q_k' \alpha' \| - \| q_k \alpha \|}{2} \right) . \]
As we have seen, with $R=[a_{k+1};a_{k+2}, \ldots, a_L]$ resp.\ $R=[a_{k+1};a_{k+2},\ldots ]$ here
\[ q_k' \| q_k' \alpha' \| - q_k \| q_k \alpha \| = \frac{(-1)^k a_1}{(Rq_k +q_{k-1})(R q_k'+q_{k-1}')} = \frac{(-1)^k a_1}{a_{k+1}^2 q_k q_k'} +O \left( \frac{a_1}{a_{k+1}^3 q_k q_k'} \right) \]
and using \eqref{pell'qell'},
\[ \| q_k' \alpha' \| - \| q_k \alpha \| = \frac{1}{R q_k' +q_{k-1}'} - \frac{1}{R q_k +q_{k-1}} = \frac{a_1 p_k}{a_{k+1} q_k q_k'} + O \left( \frac{a_1 p_k}{a_{k+1}^2 q_k q_k'} \right) , \]
and the second claim follows.
\end{proof}

\subsection{Asymptotics of $h_p$}

We now prove Theorem \ref{asymptoticstheorem} on the asymptotics of $h_p$ after a preparatory lemma.
\begin{lem}\label{aplemma} For any $0<p<\infty$ and any integer $a \ge 1$,
\begin{equation}\label{apfirst}
\sum_{b=0}^{a-1} e^{\frac{pa}{2} \cdot \frac{b}{a} \left( 1-\frac{b}{a} \right)} = \exp \left( \frac{pa}{8} + \frac{1}{2} \log a + O \left( \max \left\{ p, \log \frac{1}{p} \right\} \right) \right)
\end{equation}
and
\begin{equation}\label{apsecond}
\sum_{b=0}^{a-1} e^{- \frac{pa}{2} \cdot \frac{b}{a} \left( 1-\frac{b}{a} \right)} = \exp \bigg( O \left( \max \left\{ p, \log \frac{1}{p} \right\} \right) \bigg)
\end{equation}
with universal implied constants.
\end{lem}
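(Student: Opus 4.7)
The plan is to complete the square in the exponent and reduce both identities to Gaussian sum-integral comparisons centered at $b=a/2$. Using $\frac{b}{a}(1-\frac{b}{a}) = \frac{1}{4}-(b/a - 1/2)^2$, the left-hand side of \eqref{apfirst} becomes $e^{pa/8}T$ with
\[ T = \sum_{b=0}^{a-1} e^{-\frac{pa}{2}(b/a - 1/2)^2}, \]
so it suffices to prove $\log T = \tfrac{1}{2}\log a + O(\max\{p,\log(1/p)\})$. The summand is nonnegative and unimodal in $b$ with peak value $1$ near $b\approx a/2$, so the standard comparison of a unimodal sum to its integral yields
\[ T = \int_0^a e^{-\frac{pa}{2}(x/a-1/2)^2}\,dx + O(1) = \sqrt{a/p}\int_{-\sqrt{pa}/2}^{\sqrt{pa}/2} e^{-u^2/2}\,du + O(1) \]
after the substitution $u=(x/a-1/2)\sqrt{pa}$. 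A universal lower bound $T \ge e^{-p/(8a)}$ (from the integer closest to $a/2$, for which $|b/a-1/2|\le 1/(2a)$) gives $\log T \ge -p/8$.

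I would then carry out a three-regime case analysis. If $pa\le 1$, the Gaussian integral factor is $\Theta(\sqrt{pa})$, so $T\asymp a$ and $\log T = \log a + O(1)$; the excess $\tfrac{1}{2}\log a$ is absorbed into $\tfrac{1}{2}\log(1/p)$ since $a\le 1/p$. If $pa>1$ and $a\ge p$, the Gaussian integral is $\Theta(1)$ and $\sqrt{a/p}\ge 1$, so $T\asymp\sqrt{a/p}$ and $\log T = \tfrac{1}{2}\log a - \tfrac{1}{2}\log p + O(1)$, with $|\log p|\le\max\{p,\log(1/p)\}$ for all $p>0$. If $a<p$ (which forces $p>1$ since $a\ge 1$), the integral estimate gives $\log T \le O(1)$ and the peak bound gives $\log T\ge -p/8$, so $|\log T - \tfrac{1}{2}\log a|\le p/8 + \tfrac{1}{2}\log p + O(1) = O(p)$. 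Since $\inf_{p>0}\max\{p,\log(1/p)\}>0$, all residual $O(1)$ terms are absorbed into $O(\max\{p,\log(1/p)\})$.

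For \eqref{apsecond}, the $b=0$ term gives $S_-\ge 1$ and hence $\log S_-\ge 0$. For the matching upper bound I would use the elementary inequality $\frac{b}{a}(1-\frac{b}{a})\ge\frac{\min(b,a-b)}{2a}$ (which is immediate from $1-b/a\ge 1/2$ when $b\le a/2$, and symmetrically when $b\ge a/2$), which yields
\[ S_-\le 2\sum_{k=0}^{\infty} e^{-pk/4} = \frac{2}{1-e^{-p/4}}, \]
bounded by $O(1)$ for $p\ge 1$ and by $16/p$ for $p<1$ (using $1-e^{-x}\ge x/2$ on $[0,1]$). Hence $\log S_-\le\max\{0,\log(1/p)\}+O(1) = O(\max\{p,\log(1/p)\})$. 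The only mildly fiddly part of the whole argument is bookkeeping the three regimes in \eqref{apfirst}; the sum-integral comparison and the inequality for \eqref{apsecond} are both routine one-line estimates.
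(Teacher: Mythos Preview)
Your argument is correct. For \eqref{apfirst} you follow essentially the same route as the paper: complete the square, compare the Gaussian-weighted sum to its integral, and split into three regimes (small $pa$; moderate with $a\ge p$; and $a<p$). The paper's case boundaries are $pa<100$, $pa\ge 100$ with $p\le 64a$, and $p>64a$, which are the same regimes with different numerical cutoffs; both proofs use the peak term $b=\lfloor a/2\rfloor$ to handle the third regime.

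For \eqref{apsecond} your upper bound is genuinely different and more elementary. The paper compares the sum to the integral $a e^{-pa/8}\int_{-1/2}^{1/2} e^{\frac{pa}{2}x^2}\,dx$ and then invokes the Dawson-type fact $\sup_{y\ge 0} y e^{-y^2}\int_0^y e^{x^2}\,dx<\infty$ to conclude $S_-\ll 1+1/p$. You instead replace the quadratic $\frac{b}{a}(1-\frac{b}{a})$ by the piecewise-linear minorant $\frac{\min(b,a-b)}{2a}$ and sum a geometric series, which avoids any integral estimate and gives the same bound $S_-\le\frac{2}{1-e^{-p/4}}$ directly. This is a cleaner device; the paper's route has the mild advantage of reusing the same integral-comparison machinery as in \eqref{apfirst}, but at the cost of quoting a less familiar inequality.
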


\begin{proof} We start with \eqref{apfirst}. Each term in the sum is at most $e^{pa/8}$, thus comparing the sum to the corresponding integral leads to the upper bound
\[ \sum_{b=0}^{a-1} e^{\frac{pa}{2} \cdot \frac{b}{a} \left( 1-\frac{b}{a} \right)} \le a \int_0^1 e^{\frac{pa}{2} x(1-x)} \, \mathrm{d}x + e^{\frac{pa}{8}} \le a e^{\frac{pa}{8}} \int_{-\infty}^{\infty} e^{-\frac{pa}{2} (x-1/2)^2} \, \mathrm{d} x + e^{\frac{pa}{8}} = \left( \sqrt{\frac{2 \pi a}{p}} +1 \right) e^{\frac{pa}{8}} . \]
Here
\[ \log  \left( \sqrt{\frac{2 \pi a}{p}} +1 \right) \le \frac{1}{2} \log a + O \left( \max \left\{ p, \log \frac{1}{p} \right\} \right) , \]
and the $\le$ part of \eqref{apfirst} follows. Since $e^{\frac{pa}{2}x(1-x)}$ is increasing on $[0,1/2]$, comparing the sum to the corresponding integral leads to the lower bound
\[ \sum_{b=1}^{\lfloor a/2 \rfloor} e^{\frac{pa}{2} \cdot \frac{b}{a} \left( 1-\frac{b}{a} \right)} \ge a \int_0^{\frac{\lfloor a/2 \rfloor}{a}} e^{\frac{pa}{2} x(1-x)} \, \mathrm{d}x \ge a e^{\frac{pa}{8}} \int_{0}^{\frac{a-1}{2a}} e^{- \frac{pa}{2} (x-1/2)^2} \, \mathrm{d}x = \sqrt{\frac{a}{p}} e^{\frac{pa}{8}} \int_{- \frac{\sqrt{pa}}{2}}^{-\frac{\sqrt{p}}{2 \sqrt{a}}} e^{-x^2/2} \, \mathrm{d}x . \]
If $pa \ge 100$ and $p \le 64a$, then $-\sqrt{pa}/2 \le -5$ and $-\sqrt{p}/(2 \sqrt{a}) \ge -4$, thus the previous formula yields
\[ \sum_{b=0}^{a-1} e^{\frac{pa}{2} \cdot \frac{b}{a} \left( 1-\frac{b}{a} \right)} \gg \sqrt{\frac{a}{p}} e^{\frac{pa}{8}}, \]
which suffices for the $\ge$ part of \eqref{apfirst}. If $pa<100$, then simply using the fact that each term is at least 1 yields
\[ \sum_{b=0}^{a-1} e^{\frac{pa}{2} \cdot \frac{b}{a} \left( 1-\frac{b}{a} \right)} \ge a \ge \exp \left( \frac{pa}{8} + \frac{1}{2} \log a - \frac{100}{8} \right) , \]
which again suffices for the $\ge$ part of \eqref{apfirst}. If $p > 64 a$, then it is enough to keep the $b=\lfloor a/2 \rfloor$ term in the sum, yielding
\[ e^{\frac{pa}{2} \cdot \frac{\lfloor a/2 \rfloor}{a} \left( 1-\frac{\lfloor a/2 \rfloor}{a} \right)} \ge e^{\frac{pa}{2} \cdot \frac{a-1}{2a} \left( 1-\frac{a-1}{2a} \right)} = e^{\frac{pa}{8} - \frac{p}{8a}} \ge \exp \left( \frac{pa}{8} + \frac{1}{2} \log a - \frac{1}{2} \log \frac{p}{64} - \frac{p}{8} \right) , \]
which also suffices for the $\ge$ part of \eqref{apfirst}. This finishes the proof of \eqref{apfirst}.

We now prove \eqref{apsecond}. Keeping only the term $b=0$ gives the trivial lower bound $1$. Since each term is at most $1$, comparing the sum to the corresponding integral leads to the upper bound
\[ \begin{split} 1 \le \sum_{b=0}^{a-1} e^{- \frac{pa}{2} \cdot \frac{b}{a} \left( 1-\frac{b}{a} \right)} \le a \int_{0}^{1} e^{- \frac{pa}{2} x(1-x)} \, \mathrm{d}x +1 &= a e^{-\frac{pa}{8}} \int_{-1/2}^{1/2} e^{\frac{pa}{2} x^2} \, \mathrm{d} x +1 \\ &= \frac{8}{p} \sqrt{\frac{pa}{8}} e^{- \frac{pa}{8}} \int_{0}^{\sqrt{\frac{pa}{8}}} e^{x^2} \, \mathrm{d}x +1 \\ &\ll \frac{1}{p}+1 . \end{split} \]
In the last step we used the fact that $\sup_{y \ge 0} y e^{-y^2} \int_{0}^{y} e^{x^2} \, \mathrm{d}x < \infty$. This establishes \eqref{apsecond}.
\end{proof}

\begin{proof}[Proof of Theorem \ref{asymptoticstheorem}] It will be enough to prove the theorem for finite $p$. The claim for $p=\pm \infty$ then follows from taking the limit as $p \to \pm \infty$.

Let $r=[0;a_1,a_2,\ldots, a_L]$ be rational with denominator $q$ and convergents $p_{\ell}/q_{\ell}=[0;a_1,a_2,\ldots, a_{\ell}]$. Let $r'=T^2 r = [0;a_3,a_4,\ldots, a_L]$ with denominator $q'$ and convergents $p_{\ell}'/q_{\ell}'=[0;a_3,a_4,\ldots, a_{\ell}]$, $3 \le \ell \le L$, and $p_2'=0$, $q_2'=1$.

Fix integers $0 \le b_0 <a_1$ and $0 \le b_1 \le a_2$ such that $b_1=a_2$ implies $b_0=0$. Observe that the map $N \mapsto N-q_2$ is an injection from
\[ \left\{ 0 \le N <q \, : \, b_0(N)=b_0, \,\, b_1(N)=0, \,\, b_2(N)=a_3 \right\} \]
to
\[ \left\{ 0 \le N <q \, : \, b_0(N)=b_0, \,\, b_1(N)=0, \,\, b_2(N)=a_3-1 \right\} . \]
Two applications of Lemma \ref{localoptimumlemma} (to $N$ and $N-q_2$, with $k=1$) shows that $S_N(r)=S_{N-q_2}(r)+O(1)$, therefore
\[ \sum_{\substack{0 \le N <q \\ b_0(N)=b_0, \,\, b_1(N)=0, \,\, b_2(N)=a_3}} e^{p S_N(r)} \le \exp (O(|p|)) \sum_{\substack{0 \le N <q \\ b_0(N)=b_0, \,\, b_1(N)=0, \,\, b_2(N)=a_3-1}} e^{p S_N(r)} . \]
In particular,
\[ \sum_{\substack{0 \le N <q \\ b_0(N)=b_0, \,\, b_1(N)=b_1}} e^{p S_N(r)} = \exp \left( O \left( \max \{ |p|, 1\} \right) \right) \sum_{\substack{0 \le N <q \\ b_0(N)=b_0, \,\, b_1(N)=b_1, \,\, b_2(N)<a_3}} e^{p S_N(r)} , \]
the formula being trivial for $b_1 \neq 0$, as in that case the two sums are identical.

The ``matching'' map $N \to N'$ introduced in Section \ref{matchingsection} is a bijection
\[ \left\{ 0 \le N <q \, : \, b_0(N)=b_0, \,\, b_1(N)=b_1, \,\, b_2(N)<a_3 \right\} \to [0,q') , \]
and by Lemma \ref{matchinglemma},
\[ \begin{split} S_N(r)-S_{N'}(r') = &-b_0 \left( \frac{1-b_0 q_0 \| q_0 r \|}{2} - \frac{\| q_0 r \|}{2} \right) + b_1 \left( \frac{1-b_1 q_1 \| q_1 r \|}{2} - \| q_1 r \| b_0 q_0 - \frac{\| q_1 r \|}{2} \right) + O(1) \\ = &-b_0 \frac{1-b_0/a_1}{2} + b_1 \frac{1-b_1/a_2}{2} + O(1) . \end{split} \]
Consequently,
\[ \sum_{\substack{0 \le N < q \\ b_0(N)=b_0, \,\, b_1(N)=b_1}} e^{p S_N(r)} = \exp \left( -p b_0 \frac{1-b_0/a_1}{2} + p b_1 \frac{1-b_1/a_2}{2} + O( \max \{ |p| , 1 \} ) \right) \sum_{0 \le N <q'} e^{p S_N (r')} . \]
We now sum over all possible values of $b_0, b_1$, and apply Lemma \ref{aplemma} to deduce
\[ \begin{split} \sum_{0 \le N <q} e^{p S_N(r)} &= \left( 1+ \sum_{b_0=0}^{a_1-1} e^{-p b_0 \frac{1-b_0/a_1}{2}} \sum_{b_1=0}^{a_2-1} e^{p b_1 \frac{1-b_1/a_2}{2}} \right) \exp (O( \max \{ |p|, 1 \} )) \sum_{0 \le N <q'} e^{p S_N (r')} \\ &= \exp \left( \frac{|p| a_{\varepsilon_p}}{8} + \frac{1}{2} \log a_{\varepsilon_p} +O \left( \max \left\{ |p|, \log \frac{1}{|p|} \right\} \right) \right) \sum_{0 \le N <q'} e^{p S_N (r')} . \end{split} \]
By the definition of $h_p$, this means that
\[ h_p(r) = \mathrm{sgn} (p) \frac{a_{\varepsilon_p}}{8} + \frac{1}{2p} \log a_{\varepsilon_p} + O \left( \max \left\{ 1, \frac{1}{|p|} \log \frac{1}{|p|} \right\} \right) , \]
which is an equivalent form of the claim.
\end{proof}

\subsection{Continuity of $h_p$ at irrationals}

We now prove Theorem \ref{continuitytheorem} in a quantitative form, establishing an estimate for the modulus of continuity as well. Fix an irrational $\alpha \in (0,1)$ with continued fraction expansion $\alpha = [0;a_1,a_2,\ldots]$ and convergents $p_k/q_k=[0;a_1,a_2,\ldots, a_k]$. Let
\[ I_{k+1} = \left\{ [0;c_1,c_2,\ldots] \, : \, c_j=a_j \textrm{ for all } 1 \le j \le k+1 \right\} \]
denote the set of real numbers in $(0,1)$ whose first $k+1$ partial quotients are identical to those of $\alpha$. Recall that $I_{k+1} \subset (0,1)$ is an interval with rational endpoints; in particular, $\alpha \in \mathrm{int} \, I_{k+1}$.
\begin{thm}\label{modulusofcontinuitytheorem} Let $- \infty \le p \le \infty$, $p \neq 0$, and let $k \ge 2$ be an integer such that $a_{k+1} \ge A \max \{ 1, \frac{1}{|p|} \log \frac{1}{|p|} \}$ with a large universal constant $A>1$, and $k+1 \equiv \varepsilon_p \pmod{2}$. Then
\[ \sup_{r \in I_{k+1} \cap \mathbb{Q}} h_p(r) - \inf_{r \in I_{k+1} \cap \mathbb{Q}} h_p(r) \ll \frac{a_1 a_2}{q_k} + \sqrt{\frac{\log a_{k+1}}{\min \{1,|p| \} a_{k+1}}} \]
with a universal implied constant.
\end{thm}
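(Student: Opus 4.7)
The plan is to apply Lemma \ref{factorlemma} twice, once to $\log J_p(r)$ at level $k$ and once to $\log J_p(T^2 r)$ at the shifted level $k-2$ (both applications are legitimate: $k-1 \equiv k+1 \equiv \varepsilon_p \pmod 2$, and the $(k-2+1)$-th partial quotient of $T^2 r$ equals $a_{k+1}$). Writing $\log J_p(r) = \log F_1(r) + \log F_2(r) + O(\sqrt{\log a_{k+1}/(\min\{1,|p|\}\,a_{k+1})})$ and similarly for $T^2 r$, a key observation is that the ``first main factor'' $F_1(r)$ depends only on the $k$-th convergent $p_k/q_k$ of $r$, which for $r \in I_{k+1}$ coincides with the $k$-th convergent of $\alpha$; the analogous first factor $F_1^{*}(T^2 r)$ depends only on $[0;a_3,\ldots,a_k]$. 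Both are therefore constant as $r$ ranges over $I_{k+1} \cap \mathbb{Q}$.

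Next I would compare the second main factors using the matching lemma. Let $S \subset [0,q)$ and $S^{*} \subset [0,q')$ denote the Ostrowski-restricted summation ranges of $F_2(r)^p$ and $F_2^{*}(T^2 r)^p$ respectively. The map $N \mapsto N'$ of Section \ref{matchingsection} is a bijection $S \to S^{*}$ (the constraint $|b_k - a_{k+1}/2| \leq C\sqrt{a_{k+1}\log a_{k+1}}$ with $C = \max\{10,10/\sqrt{|p|}\}$ forces $b_2(N) < a_3$), and for such $N$ the second part of Lemma \ref{matchinglemma} gives $S_N(r) - S_{N'}(T^2 r) = \Phi(b_k(N)) + O(1/q_{k+1}')$ with
\[ \Phi(b) = \frac{a_1}{2 q_k q_k'} \left[ (-1)^{k+1} p_k \frac{b}{a_{k+1}} - \left( \frac{b}{a_{k+1}} \right)^2 \right]. \]
Setting $\Phi_0 = \Phi(a_{k+1}/2)$ and $\Psi = \Phi - \Phi_0$ leads to
\[ \log F_2(r) - \log F_2^{*}(T^2 r) = \Phi_0 + \frac{1}{p} \log E_w \left[ e^{p \Psi(b_k')} \right] + O(1/q_{k+1}'), \]
where $E_w$ is the weighted average over $S^{*}$ with weights $e^{p S_{N'}(T^2 r)}$. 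Since $\Phi_0$ depends only on $a_1, p_k, q_k, q_k'$, it is also constant over $I_{k+1}$, and the only $r$-dependent piece is the weighted log-moment, which lies in $[\min_b \Psi(b), \max_b \Psi(b)]$ on the restricted range; hence its oscillation over $r$ is at most $2 \sup |\Psi|$.

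The final step is to estimate $\sup |\Psi|$. By the mean value theorem, $|\Phi'(b)| \leq a_1(|p_k|+2)/(2 q_k q_k' a_{k+1})$, and the identity $q_k = a_1 p_k + q_k'$ yields $|p_k| < q_k/a_1$, so $\sup|\Psi| \ll C\sqrt{\log a_{k+1}}/(q_k'\sqrt{a_{k+1}})$. The bridge to the first term of the target bound is the estimate $q_k/q_k' \ll a_1 a_2$, which follows from $q_k' = q_k(1 - a_1 \alpha) + (-1)^k a_1 \|q_k \alpha\|$ together with the identity $1 - a_1 \alpha = \alpha T\alpha \geq 1/((a_1+1)(a_2+1))$. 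Inserting $C \leq 10/\sqrt{\min\{1,|p|\}}$ then gives $\sup |\Psi| \ll (a_1 a_2 / q_k) \sqrt{\log a_{k+1}/(\min\{1,|p|\}\,a_{k+1})}$; under the hypothesis on $a_{k+1}$ the square-root factor is universally bounded, so $\sup|\Psi| \ll a_1 a_2/q_k$. The matching error satisfies $1/q_{k+1}' \leq 1/(a_{k+1} q_k') \ll a_1 a_2/q_k$, and the two Lemma \ref{factorlemma} errors contribute the second term of the target bound.

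The main obstacle will be the careful bookkeeping of errors uniformly in $p$: one must verify that the $|p|$-dependent range $C$, the exponentiated matching error $e^{O(|p|/q_{k+1}')}$, and the weighted log-moment averaging all combine with a universal implied constant, and that the bilinear estimate $\sup|\Psi| \ll (a_1 a_2/q_k)\cdot\sqrt{\ldots}$ is legitimately absorbed into the sum $a_1 a_2/q_k + \sqrt{\ldots}$ (using the fact that both factors are bounded by $1$ under the hypothesis). The $p = \pm\infty$ cases follow by invoking the max/min versions of Lemmas \ref{factorlemma} and \ref{matchinglemma} throughout, or equivalently by passing to the limit.
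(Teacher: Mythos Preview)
Your proof is correct and follows the same plan as the paper: apply Lemma~\ref{factorlemma} to both $r$ and $T^2 r$, note that the first factors depend only on $p_k/q_k$ and $p_k'/q_k'$ and are therefore constant on $I_{k+1}$, and compare the second factors via the matching bijection and the second part of Lemma~\ref{matchinglemma}. The one unnecessary refinement is your extraction of $\Phi_0$: the paper simply bounds the entire expression from Lemma~\ref{matchinglemma} as $|S_N(r)-S_{N'}(r')|\le a_1(p_k+1)/(2q_kq_k')+O(1/q_{k+1}')\ll 1/q_k'\ll a_1a_2/q_k$ (using $a_1p_k=q_k-q_k'<q_k$), so the log-ratio of the second factors is $O(a_1a_2/q_k)$ directly, without any mean-value step or weighted-moment argument.
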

\noindent In particular, if $\sup_{k \in \mathbb{N}} a_{2k+\varepsilon_p}=\infty$, then
\[ \liminf_{\substack{k \to \infty \\ k+1 \equiv \varepsilon_p \pmod{2}}} \left( \frac{a_1 a_2}{q_k} +\sqrt{\frac{\log a_{k+1}}{\min \{1,|p| \} a_{k+1}}}  \right) =0 , \]
and consequently $\lim_{r \to \alpha} h_p(r)$ exists and is finite by the Cauchy criterion. This proves Theorem \ref{continuitytheorem}.

\begin{proof}[Proof of Theorem \ref{modulusofcontinuitytheorem}] We only give a detailed proof for finite $p$, as the proof for $p=\pm \infty$ is entirely analogous. Let $\alpha'=T^2 \alpha = [0;a_3,a_4,\ldots]$, and let $p_{\ell}'/q_{\ell}'=[0;a_3,a_4,\ldots ]$, $\ell \ge 3$ and $p_2'=0$, $q_2'=1$ denote its convergents.

Let $r \in I_{k+1} \cap \mathbb{Q}$ be arbitrary with denominator $q$, continued fraction expansion $r=[0;c_1,c_2,\ldots, c_L]$ and convergents $\bar{p}_{\ell}/\bar{q}_{\ell}=[0;c_1,c_2,\ldots, c_{\ell}]$. Let $r'=T^2r=[0;c_3,c_4,\ldots, c_L]$ with denominator $q'$, and convergents $\bar{p}_{\ell}' / \bar{q}_{\ell}'=[0;c_3,c_4,\ldots, c_{\ell}]$, $3 \le \ell \le L$ and $\bar{p}_2'=0$, $\bar{q}_2'=1$. By construction, we have $\bar{p}_{\ell}/\bar{q}_{\ell} = p_{\ell}/q_{\ell}$ for all $0 \le \ell \le k+1$, and $\bar{p}_{\ell}'/\bar{q}_{\ell}' = p_{\ell}'/q_{\ell}'$ for all $2 \le \ell \le k+1$.

An application of Lemma \ref{factorlemma} to $r$ resp.\ $r'$ with $K=L$ yields
\[ \begin{split} \Bigg( \sum_{0 \le N <q} e^{p S_N(r)} \Bigg)^{1/p} = &\left( 1+O \left( \sqrt{\frac{\log a_{k+1}}{\min \{1,|p| \} a_{k+1}}} \right) \right) \Bigg( \sum_{0 \le N <q_k} e^{p (S_N(p_k/q_k) + (-1)^k N/(2q_k))} \Bigg)^{1/p} \\ &\times \Bigg( \sum_{\substack{0 \le N < q \\ b_0(N) = \cdots = b_{k-1}(N) =0 \\ |b_k(N) - a_{k+1}/2| \le \max \{ 10,10/\sqrt{|p|} \} \sqrt{a_{k+1} \log a_{k+1}}}} e^{p S_N(r)} \Bigg)^{1/p} \end{split} \]
resp.
\[ \begin{split} \Bigg( \sum_{0 \le N <q'} e^{p S_N(r')} \Bigg)^{1/p} = &\left( 1+O \left( \sqrt{\frac{\log a_{k+1}}{\min \{1,|p| \} a_{k+1}}} \right) \right) \Bigg( \sum_{0 \le N <q_k'} e^{p (S_N(p_k'/q_k') + (-1)^k N/(2q_k'))} \Bigg)^{1/p} \\ &\times \Bigg( \sum_{\substack{0 \le N < q' \\ b_2'(N) = \cdots = b_{k-1}'(N) =0 \\ |b_k'(N) - a_{k+1}/2| \le \max \{ 10,10/\sqrt{|p|} \} \sqrt{a_{k+1} \log a_{k+1}}}} e^{p S_N(r')} \Bigg)^{1/p} . \end{split} \]
Here $b_{\ell}(N)$ resp.\ $b_{\ell}'(N)$ denote the digits in the Ostrowski expansion with respect to $r$ resp.\ $r'$. Consequently,
\begin{equation}\label{hpestimate}
\begin{split} h_p(r)=\log \frac{J_p(r)}{J_p(r')} = &Z_{p,k}(\alpha) + \frac{1}{p} \log \frac{\displaystyle{\sum_{\substack{0 \le N < q \\ b_0(N) = \cdots = b_{k-1}(N) =0 \\ |b_k(N) - a_{k+1}/2| \le \max \{ 10,10/\sqrt{|p|} \} \sqrt{a_{k+1} \log a_{k+1}}}} e^{p S_N(r)}}}{\displaystyle{\sum_{\substack{0 \le N < q' \\ b_2'(N) = \cdots = b_{k-1}'(N) =0 \\ |b_k'(N) - a_{k+1}/2| \le \max \{ 10,10/\sqrt{|p|} \} \sqrt{a_{k+1} \log a_{k+1}}}} e^{p S_N(r')}}} \\ &+ O \left( \sqrt{\frac{\log a_{k+1}}{\min \{1,|p| \} a_{k+1}}} \right) , \end{split}
\end{equation}
with the crucial observation that
\[ Z_{p,k}(\alpha) := \frac{1}{p} \log \frac{\displaystyle{\sum_{0 \le N <q_k} e^{p (S_N(p_k/q_k) + (-1)^k N/(2q_k))}}}{\displaystyle{\sum_{0 \le N <q_k'} e^{p (S_N(p_k'/q_k') + (-1)^k N/(2q_k'))}}} \]
depends only on $\alpha$, but not on $r$.

The ``matching'' map $N \mapsto N'$ introduced in Section \ref{matchingsection} is a bijection from the set
\[ \left\{ 0 \le N<q \, : \, \begin{array}{c} b_0(N) = \cdots = b_{k-1}(N)=0, \\ |b_k(N) - a_{k+1}/2| \le \max \{ 10,10/\sqrt{|p|} \} \sqrt{a_{k+1} \log a_{k+1}} \end{array} \right\} \]
to the set
\[ \left\{ 0 \le N<q' \, : \, \begin{array}{c} b_2'(N) = \cdots = b_{k-1}'(N)=0, \\ |b_k'(N) - a_{k+1}/2| \le \max \{ 10,10/\sqrt{|p|} \} \sqrt{a_{k+1} \log a_{k+1}} \end{array} \right\} , \]
and by Lemma \ref{matchinglemma}, $| S_N(r)-S_{N'}(r') | \ll 1/q_k' \ll a_1 a_2 /q_k$. Hence
\[ \frac{\displaystyle{\sum_{\substack{0 \le N < q \\ b_0(N) = \cdots = b_{k-1}(N) =0 \\ |b_k(N) - a_{k+1}/2| \le \max \{ 10,10/\sqrt{|p|} \} \sqrt{a_{k+1} \log a_{k+1}}}} e^{p S_N(r)}}}{\displaystyle{\sum_{\substack{0 \le N < q' \\ b_2'(N) = \cdots = b_{k-1}'(N) =0 \\ |b_k'(N) - a_{k+1}/2| \le \max \{ 10,10/\sqrt{|p|} \} \sqrt{a_{k+1} \log a_{k+1}}}} e^{p S_N(r')}}} = \exp \left( O \left( |p| \frac{a_1 a_2}{q_k} \right) \right) , \]
and \eqref{hpestimate} leads to
\[ h_p (r) = Z_{p,k}(\alpha) + O \left( \frac{a_1 a_2}{q_k} + \sqrt{\frac{\log a_{k+1}}{\min \{1,|p| \} a_{k+1}}} \right) \qquad \textrm{uniformly in } r \in I_{k+1} \cap \mathbb{Q} . \]
This establishes the desired upper bound to the oscillation of $h_p$ on the set $I_{k+1} \cap \mathbb{Q}$.
\end{proof}

\subsection{One-sided limit of $h_p$ at rationals}

\begin{proof}[Proof of Theorem \ref{onesidedlimittheorem}] We only give a detailed proof for finite $p$, as the proof for $p=\pm \infty$ is entirely analogous.

Fix a reduced rational $a/q \in (0,1)$. It has exactly two continued fraction expansions, one of even length and one of odd length. Consider thus the expansion $a/q=[0;a_1,a_2,\ldots, a_s]$ with odd $s \ge 3$ if $p>0$, and even $s \ge 2$ if $p<0$, and let $p_k/q_k=[0;a_1,a_2,\ldots, a_k]$ denote its convergents. In particular, $s+1 \equiv \varepsilon_p \pmod{2}$. Let $I(n)$ be the set of all reals of the form $[0;a_1,a_2,\ldots, a_s,m,\ldots]$ with $m \ge n$. Note that $I(n)$ is an interval with endpoints $(p_s n+p_{s-1})/(q_s n +q_{s-1})$ and $p_s/q_s=a/q$. The choice of the parity of $s$ implies that $I(n)=[a/q-\kappa_n, a/q)$ is a left-hand neighborhood if $p>0$, whereas $I(n)=(a/q,a/q+\kappa_n]$ is a right-hand neighborhood if $p<0$, of length $\kappa_n=1/(q_s^2 n+q_{s-1}q_s)$. It will thus be enough to prove that $\sup_{r \in I(n) \cap \mathbb{Q}} |h_p(r)-W_p(a/q)| \to 0$ as $n \to \infty$.

Now let $n>A \max \{ 1,\frac{1}{|p|} \log \frac{1}{|p|} \}$ with a large universal constant $A>1$, and let $r \in I(n) \cap \mathbb{Q}$ be arbitrary. The continued fraction of $r$ is thus of the form $r=[0;a_1,a_2,\ldots, a_L]$ with $L \ge s+1\ge 3$ and $a_{s+1} \ge n$. In particular, the convergents $p_k/q_k$, $0 \le k \le L$ to $r$ coincide with those to $a/q$ for $0 \le k \le s$. Let $r'=T^2 r=[0;a_3,\ldots, a_L]$ with convergents $p_k'/q_k'=[0;a_3,\ldots, a_k]$, $3 \le k \le L$ and $p_2'=0$, $q_2'=1$. Then $a'/q'=T^2 (a/q)=[0;a_3,\ldots, a_s]$ has the same convergents for $2 \le k \le s$.

Following the steps in the proof of Theorem \ref{modulusofcontinuitytheorem} leading up to \eqref{hpestimate} (with $k=s$), we deduce
\[ \begin{split} h_p(r) = &\frac{1}{p} \log \frac{\displaystyle{\sum_{0 \le N <q} e^{p (S_N(a/q) -\mathrm{sgn}(p) N/(2q))}}}{\displaystyle{\sum_{0 \le N <q'} e^{p (S_N(a'/q') -\mathrm{sgn}(p) N/(2q'))}}}\\ &+ \frac{1}{p} \log \frac{\displaystyle{\sum_{\substack{0 \le N < q_L \\ b_0(N) = \cdots = b_{s-1}(N) =0 \\ |b_s(N) - a_{s+1}/2| \le \max \{ 10,10/\sqrt{|p|} \} \sqrt{a_{s+1} \log a_{s+1}}}} e^{p S_N(r)}}}{\displaystyle{\sum_{\substack{0 \le N < q_L' \\ b_2'(N) = \cdots = b_{s-1}'(N) =0 \\ |b_s'(N) - a_{s+1}/2| \le \max \{ 10,10/\sqrt{|p|} \} \sqrt{a_{s+1} \log a_{s+1}}}} e^{p S_N(r')}}} + O \left( \sqrt{\frac{\log n}{\min \{1,|p| \} n}} \right) . \end{split} \]
Here $b_{\ell}(N)$ resp.\ $b_{\ell}'(N)$ denote the digits in the Ostrowski expansion with respect to $r$ resp.\ $r'$. The first term in the previous formula depends only on $a/q$ but not on $r$.

It remains to estimate the second term. The ``matching'' map $N \mapsto N'$ introduced in Section \ref{matchingsection} is a bijection from the set
\[ \left\{ 0 \le N<q_L \, : \, \begin{array}{c} b_0(N) = \cdots = b_{s-1}(N)=0, \\ |b_s(N) - a_{s+1}/2| \le \max \{ 10,10/\sqrt{|p|} \} \sqrt{a_{s+1} \log a_{s+1}} \end{array} \right\} \]
to the set
\[ \left\{ 0 \le N<q_L' \, : \, \begin{array}{c} b_2'(N) = \cdots = b_{s-1}'(N)=0, \\ |b_s'(N) - a_{s+1}/2| \le \max \{ 10,10/\sqrt{|p|} \} \sqrt{a_{s+1} \log a_{s+1}} \end{array} \right\} . \]
By Lemma \ref{matchinglemma}, for all such $N$,
\[ \begin{split} S_N(r) - S_{N'}(r') &= a_1 \frac{\mathrm{sgn}(p) p_s b_s(N)/a_{s+1} - (b_s(N)/a_{s+1})^2}{2 q_s q_s'} + O \left( \frac{1}{q_{s+1}'} \right) \\ &= \lfloor q/a \rfloor \frac{\mathrm{sgn}(p) a /2 - 1/4}{2 q q'} + O \left( \sqrt{\frac{\log n}{\min \{ 1,|p| \} n}} \right) , \end{split} \]
consequently
\[ \begin{split} \frac{1}{p} \log \frac{\displaystyle{\sum_{\substack{0 \le N < q_L \\ b_0(N) = \cdots = b_{s-1}(N) =0 \\ |b_s(N) - a_{s+1}/2| \le \max \{ 10,10/\sqrt{|p|} \} \sqrt{a_{s+1} \log a_{s+1}}}} e^{p S_N(r)}}}{\displaystyle{\sum_{\substack{0 \le N < q_L' \\ b_2'(N) = \cdots = b_{s-1}'(N) =0 \\ |b_s'(N) - a_{s+1}/2| \le \max \{ 10,10/\sqrt{|p|} \} \sqrt{a_{s+1} \log a_{s+1}}}} e^{p S_N(r')}}} = &\lfloor q/a \rfloor \frac{\mathrm{sgn}(p) a /2 - 1/4}{2 q q'} \\ &+ O \left( \sqrt{\frac{\log n}{\min \{ 1,|p| \} n}} \right) . \end{split} \]
Hence
\[ h_p(r) = W_p(a/q) + O \left( \sqrt{\frac{\log n}{\min \{ 1,|p| \} n}} \right) \qquad \textrm{uniformly in } r \in I(n), \]
and the desired limit relation follows.
\end{proof}

\section{Quadratic irrationals}\label{proofquadraticsection}

Fix a quadratic irrational $\alpha$ and a parameter $-\infty \le p \le \infty$, $p \neq 0$. Throughout this section, constants and implied constants may depend on $\alpha$.

Let us write the continued fraction expansion in the form $\alpha=[a_0;a_1,\ldots, a_s, \overline{a_{s+1}, \ldots, a_{s+m}}]$, where the overline denotes the period. We can always choose the period length $m$ to be even, although it might not be the shortest possible period. This choice is convenient because $S_N(\alpha)$ is odd in the variable $\alpha$, cf.\ the alternating factor $(-1)^{\ell+1}$ in Ostrowski's explicit formula in Lemma \ref{ostrowskilemma}. Solving the recursions with periodic coefficients gives that for any $k \ge 0$ and $1 \le r \le m$,
\begin{equation}\label{pkqkrecursion}
q_{s+km+r} = E_r \eta^k+F_r \eta^{-k} \qquad \textrm{and} \qquad \| q_{s+km+r} \alpha \|= G_r \eta^{-k}
\end{equation}
with some explicitly computable constants $\eta>1$, $E_r,G_r>0$ and $F_r \in \mathbb{R}$, $1 \le r \le m$ \cite[Eq.\ (28)]{AB2}.

The following lemma states that shifting the digits in the Ostrowski expansion by full periods has a negligible effect.
\begin{lem}\label{shiftlemma} Let $0 \le N <q_{s+km}$ be an integer with Ostrowski expansion $N=\sum_{\ell =s}^{s+km-1} b_{\ell}(N) q_{\ell}$. Let $i \ge 1$ be an integer, and set $N^{(i)}=\sum_{\ell=s+im}^{s+(i+k)m-1} b_{\ell-im}(N)q_{\ell}$. Then $|S_N(\alpha) - S_{N^{(i)}}(\alpha)| \ll 1$.
\end{lem}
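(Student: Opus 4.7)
The plan is to apply Ostrowski's explicit formula (Lemma~\ref{ostrowskilemma}) to both $S_N(\alpha)$ and $S_{N^{(i)}}(\alpha)$ and to show that their termwise difference decays geometrically in the period index, using the closed forms \eqref{pkqkrecursion}. Since $b_j(N)=0$ for $j<s$ (and likewise for $N^{(i)}$), Lemma~\ref{ostrowskilemma} expresses $S_N(\alpha)$ as a sum over $\ell=s,\ldots,s+km-1$. Substituting $\ell\mapsto \ell+im$ and $j\mapsto j+im$ in the corresponding expansion for $S_{N^{(i)}}(\alpha)$, and using crucially that $m$ is even (so that the alternating sign $(-1)^{\ell+1}$ is preserved by the shift), I can align the two sums termwise to write
\[
S_N(\alpha)-S_{N^{(i)}}(\alpha) = \sum_{\ell=s}^{s+km-1}(-1)^{\ell+1} b_\ell(N)\, D_\ell,
\]
where $D_\ell$ is built from three types of differences: $q_{\ell+im}\|q_{\ell+im}\alpha\|-q_\ell\|q_\ell\alpha\|$, $q_{j+im}\|q_{\ell+im}\alpha\|-q_j\|q_\ell\alpha\|$ for $j=s,\ldots,\ell-1$, and $\|q_{\ell+im}\alpha\|-\|q_\ell\alpha\|$.

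The key step is to plug \eqref{pkqkrecursion} into each type. Writing $\ell=s+k_0 m+r$ and $j=s+k_1 m+r'$ with $0\le k_1\le k_0<k$ and $1\le r,r'\le m$, direct multiplication gives, for instance,
\[
q_{j+im}\|q_{\ell+im}\alpha\|-q_j\|q_\ell\alpha\| = G_rF_{r'}\bigl(\eta^{-k_0-k_1-2i}-\eta^{-k_0-k_1}\bigr) = O(\eta^{-k_0-k_1}),
\]
uniformly in $i\ge 1$, with completely analogous estimates $O(\eta^{-2k_0})$ and $O(\eta^{-k_0})$ for the first and third types. Since the partial quotients of $\alpha$ are eventually periodic, the digits $b_\ell(N)$ are uniformly bounded in terms of $\alpha$, so after summing the middle type over $j$ (equivalently over $k_1=0,1,\ldots,k_0$, with $m$ values of $r'$ each) one gets a geometric series and hence $|D_\ell|\ll \eta^{-k_0}$.

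Summing $|b_\ell(N)D_\ell|$ over $\ell$, organized by the period index $k_0=0,1,\ldots,k-1$ (with $m$ values of $r$ each), then produces a geometric series in $\eta^{-1}$ which converges to a constant depending only on $\alpha$, independently of both $k$ and $i$. This yields $|S_N(\alpha)-S_{N^{(i)}}(\alpha)|\ll 1$, as claimed. The only delicate point is bookkeeping the three types of difference terms and verifying that each really decays at rate $\eta^{-k_0}$; there is no deeper obstacle, since once \eqref{pkqkrecursion} is invoked every estimate reduces to elementary manipulations of geometric series, and the evenness of $m$ ensures that no sign flips appear when shifting by $i$ periods.
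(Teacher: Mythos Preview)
Your proof is correct and follows essentially the same approach as the paper: both apply Ostrowski's explicit formula to $N$ and $N^{(i)}$, align the sums termwise using the evenness of $m$, and then invoke \eqref{pkqkrecursion} to show that each of the three types of difference terms decays like $\eta^{-(j+\ell)/m}$ (equivalently, like $\eta^{-k_0-k_1}$ in your notation), whence the total sum is a convergent geometric series. Your write-up in fact spells out the cancellation of the leading $E_{r'}$-terms more explicitly than the paper does; the only cosmetic wrinkle is that your parameterization $\ell=s+k_0m+r$ with $1\le r\le m$ and $k_0\ge 0$ formally misses the boundary index $\ell=s$, but that single term is trivially $O(1)$.
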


\begin{proof} Note that the shift results in a legitimate Ostrowski expansion for $N^{(i)}$, that is, $b_{\ell}(N^{(i)})=b_{\ell-im}(N)$ for all $s+im \le \ell \le s+(i+k)m-1$. Applying Ostrowski's explicit formula in Lemma \ref{ostrowskilemma} to $N$ and $N^{(i)}$ thus yields
\[ \begin{split} S_N(\alpha) - S_{N^{(i)}}(\alpha) = \sum_{\ell=s}^{s+km-1} (-1)^{\ell+1} b_{\ell}(N) \bigg( &\frac{b_{\ell}(N) ( q_{\ell+im} \| q_{\ell+im} \alpha \| - q_{\ell} \| q_{\ell} \alpha \| )}{2} \\ &+ \sum_{j=s}^{\ell-1} b_j(N) (q_{j+im} \| q_{\ell+im} \alpha \| -q_j \| q_{\ell} \alpha \|) \\ &+\frac{\| q_{\ell+im} \alpha \| - \| q_{\ell} \alpha \|}{2} \bigg) . \end{split} \]
Formula \eqref{pkqkrecursion} shows that here $q_{j+im} \| q_{\ell+im} \alpha \| - q_j \| q_{\ell} \alpha \| = O(\eta^{-(j+\ell)/m})$ for all $s \le j \le \ell$, and the claim follows.
\end{proof}

We now show that $\log J_{p,M}(\alpha)$ with $M=q_{s+km}$ is approximately additive in $k$.
\begin{lem}\label{additivelemma} For any integers $i,k \ge 1$,
\[ \log J_{p,q_{s+(i+k)m}}(\alpha) = \log J_{p,q_{s+im}}(\alpha) + \log J_{p,q_{s+km}}(\alpha) +O(\max \{ 1,1/|p| \}) . \]
\end{lem}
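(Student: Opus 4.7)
My plan is to split each $N\in [0, q_{s+(i+k)m})$ at the boundary $\ell = s+im$ of its Ostrowski expansion, writing $N = N_1 + N_2$ with $N_1 = \sum_{\ell < s+im} b_\ell(N) q_\ell \in [0, q_{s+im})$ and $N_2 = \sum_{\ell \ge s+im} b_\ell(N) q_\ell$, and then shifting $N_2$ down by $im$ periods to $\tilde N_2 = \sum_{\ell \ge s} b_{\ell + im}(N) q_\ell \in [0, q_{s+km})$. The goal is to factor $J_{p,q_{s+(i+k)m}}(\alpha)^p$ as a product of $J_{p,q_{s+im}}(\alpha)^p$ and $J_{p,q_{s+km}}(\alpha)^p$ up to a multiplicative error $e^{O(|p|)}$ with constants depending only on $\alpha$.

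The first key step is to combine Lemma~\ref{SNlemma}, applied with splitting index $k_0 = s+im$, with Lemma~\ref{shiftlemma}, to obtain $S_N(\alpha) = S_{N_1}(\alpha) + S_{\tilde N_2}(\alpha) + O(1)$. Indeed, Lemma~\ref{SNlemma} gives $S_N(\alpha) = S_{N_1}(\alpha) + S_{N_2}(\alpha) + (-1)^{s+im} b_{s+im}(N) \|q_{s+im}\alpha\| N_1 + O(1/a_{s+im+1})$, and the correction term is $O(1)$ because $b_{s+im}(N) \|q_{s+im}\alpha\| \le 1/q_{s+im}$ and $N_1 < q_{s+im}$. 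The shift invariance $S_{N_2}(\alpha) = S_{\tilde N_2}(\alpha) + O(1)$ then follows from Lemma~\ref{shiftlemma} via the periodicity of the partial quotients of $\alpha$ from index $s+1$ onwards. Exponentiating by $p$ converts each $e^{pS_N(\alpha)}$ into $e^{pS_{N_1}(\alpha)} e^{pS_{\tilde N_2}(\alpha)}$ up to the factor $e^{O(|p|)}$.

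The map $N \leftrightarrow (N_1, \tilde N_2)$ is almost a bijection onto $[0, q_{s+im}) \times \{\tilde N_2 \in [0, q_{s+km}) : \tilde b_0 = \cdots = \tilde b_{s-1} = 0\}$; the only defect is the Ostrowski carry rule at the boundary, which forces $b_{s+im-1}(N_1) = 0$ whenever $b_s(\tilde N_2) = a_{s+1}$. Since $\alpha$ has uniformly bounded partial quotients, Lemma~\ref{localoptimumlemma} shows that the partial sum of $e^{pS_{N_1}(\alpha)}$ over $N_1$ with any prescribed value of a single Ostrowski digit is comparable, up to $e^{O(|p|)}$, to the sum over unrestricted $N_1$, and the analogous statement holds for $\tilde N_2$; this lets me replace the valid set by the full product at the cost of a further $e^{O(|p|)}$ factor. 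Finally, dropping the constraint $\tilde b_0 = \cdots = \tilde b_{s-1} = 0$ to recover $J_{p,q_{s+km}}(\alpha)^p$ contributes a bounded factor $\sum_{N_1' < q_s} e^{pS_{N_1'}(\alpha)} = e^{O(|p|)}$, since $q_s$ and $\max_{N_1'<q_s}|S_{N_1'}(\alpha)|$ depend only on $\alpha$. Assembling everything gives $J_{p,q_{s+(i+k)m}}(\alpha)^p = e^{O(|p|)} J_{p,q_{s+im}}(\alpha)^p J_{p,q_{s+km}}(\alpha)^p$, and dividing the logarithm by $p$ yields the additivity with error $O(1) + O(1/|p|) = O(\max\{1, 1/|p|\})$. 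The cases $p = \pm\infty$ follow either by passing to the limit or by running the same argument with $\max$ or $\min$ replacing the exponential sum, in which case the $1/|p|$ term disappears. I expect the main obstacle to be the boundary carry rule in the almost-bijection, whose resolution relies crucially on the uniform boundedness of the partial quotients, i.e., on $\alpha$ being quadratic.
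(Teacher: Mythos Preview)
Your approach is essentially the paper's: split $N$ at the period boundary via Lemma~\ref{SNlemma}, shift by full periods via Lemma~\ref{shiftlemma}, and use that for bounded partial quotients changing a single Ostrowski digit perturbs $S_N(\alpha)$ by $O(1)$. The only structural difference is cosmetic: the paper handles the carry defect by proving two one-sided injections separately (for the direction $c_ic_k\le Cc_{i+k}$ it inserts a full period of zero digits between the two blocks, landing in $[0,q_{s+(i+k+1)m})$ and then invoking $c_{i+k+1}\le e^{O(\max\{|p|,1\})}c_{i+k}$), whereas you argue directly that the exceptional pairs contribute a bounded factor. Both work.

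There is, however, a systematic slip in your error bookkeeping. Several factors you write as $e^{O(|p|)}$ are only $e^{O(\max\{|p|,1\})}$. For example, $\sum_{N_1'<q_s}e^{pS_{N_1'}(\alpha)}$ lies between $1$ and $q_s e^{C|p|}$, so it is bounded by $e^{O(\max\{|p|,1\})}$ but not by $e^{O(|p|)}$ as $p\to 0$; likewise your claim that the sum over $N_1$ with a prescribed digit is within $e^{O(|p|)}$ of the unrestricted sum is false for small $|p|$ (the ratio stays bounded away from $1$ by counting). Your final line already writes the error as $O(1)+O(1/|p|)$, which is the correct answer, but the intermediate assertions should read $e^{O(\max\{|p|,1\})}$ throughout. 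Also note that what you actually need is not Lemma~\ref{localoptimumlemma} (which moves a digit to the specific value $\lfloor a_{k+1}/2\rfloor$) but the simpler fact, stated at the start of the paper's proof, that each term in Ostrowski's formula is $O(1)$ when the partial quotients are bounded.
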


\begin{proof} It will be enough to prove the lemma for finite $p$. The claim for $p=\pm \infty$ then follows from taking the limit as $p \to \pm \infty$.

Note that each individual term in Ostrowski's explicit formula in Lemma \ref{ostrowskilemma} is $O(1)$. In particular, $S_N(\alpha)=O(1)$ whenever $N$ has $O(1)$ nonzero digits in its Ostrowski expansion. More generally, changing a single Ostrowski digit of $N$ changes the value of $S_N(\alpha)$ by $O(1)$.

Let $c_k=\sum_{0 \le N<q_{s+km}} e^{p S_N(\alpha)}$, $k \ge 1$. Observe that the map $[0,q_{s+(k+1)m}) \to [0,q_{s+km})$, $N=\sum_{\ell=0}^{s+(k+1)m-1} b_{\ell}(N) q_{\ell} \mapsto N^-=\sum_{\ell=0}^{s+km-1} b_{\ell}(N) q_{\ell}$ has the property that each value is attained $O(1)$ times. Since $N^-$ is obtained from $N$ by deleting a single Ostrowski digit, we have $S_{N^-}(\alpha) = S_N (\alpha)+O(1)$. Hence for all $k \ge 1$,
\begin{equation}\label{ck+1ck}
c_{k+1} \le e^{O(|p|)} \sum_{0 \le N<q_{s+(k+1)m}} e^{pS_{N^-} (\alpha)} \le e^{O(\max\{ |p|, 1\})} c_k.
\end{equation}

Now fix $i,k \ge 1$. Let $0\le N'<q_{s+im}$ and $0 \le N''<q_{s+km}$ be integers with Ostrowski expansions $N'=\sum_{\ell =0}^{s+im-1} b_{\ell}(N') q_{\ell}$ and $N''=\sum_{\ell =0}^{s+km-1} b_{\ell}(N'') q_{\ell}$. Define $0 \le N<q_{s+(i+k)m}$, $N=\sum_{\ell=0}^{s+(i+k)m-1} b_{\ell}(N)q_{\ell}$ as
\[ b_{\ell}(N) = \left\{ \begin{array}{ll} b_{\ell}(N') & \textrm{if } 0 \le \ell \le s+im-1, \\ 0 & \textrm{if } s+im \le \ell \le s+(i+1)m-1, \\ b_{\ell-(i+1)m}(N'') & \textrm{if } s+(i+1)m \le \ell \le s+(i+k+1)m-1 . \end{array} \right. \]
Note that the block of zeroes in the middle ensures that the extra rule of Ostrowski expansions ($b_{\ell+1}(N)=a_{\ell+2}$ implies $b_{\ell}(N)=0$) is satisfied. The map $[0,q_{s+km}) \times [0,q_{s+im}) \to [0,q_{s+(i+k+1)m})$, $(N',N'') \mapsto N$ is injective. Deleting the first $s$ Ostrowski digits of $N''$, and then applying Lemmas \ref{shiftlemma} and \ref{SNlemma} shows that $S_N(\alpha) = S_{N'}(\alpha)+S_{N''}(\alpha)+O(1)$. Using \eqref{ck+1ck} as well thus leads to
\begin{equation}\label{cick}
c_i c_k = \sum_{\substack{0 \le N' <q_{s+im} \\ 0 \le N'' < q_{s+km}}} e^{p (S_{N'}(\alpha) + S_{N''}(\alpha))} \le e^{O(|p|)} c_{i+k+1} \le e^{O(\max \{ |p|,1 \} )} c_{i+k} .
\end{equation}

Next, for any integer $0 \le N < q_{s+(i+k)m}$ with Ostrowski expansion $N=\sum_{\ell=0}^{s+(i+k)m-1} b_{\ell}(N) q_{\ell}$ define $N_1=\sum_{\ell=0}^{s+im-1} b_{\ell}(N) q_{\ell}$ and $N_2=\sum_{\ell=s}^{s+km-1} b_{\ell+im}(N) q_{\ell}$. Note that, with the notation of Lemma \ref{shiftlemma}, $N=N_1+N_2^{(i)}$, hence Lemmas \ref{SNlemma} and \ref{shiftlemma} give $S_N(\alpha) = S_{N_1}(\alpha) + S_{N_2}(\alpha)+O(1)$. Observe that the map $[0,q_{s+(i+k)m}) \to [0,q_{s+im}) \times [0,q_{s+km})$, $N \mapsto (N_1,N_2)$ is injective, thus
\[ c_{i+k} \le e^{O(|p|)} \sum_{\substack{0 \le N_1 < q_{s+im} \\ 0 \le N_2 < q_{s+km}}} e^{p (S_{N_1}(\alpha) + S_{N_2}(\alpha))} = e^{O(|p|)} c_i c_k . \]
The previous formula together with \eqref{cick} show that $c_{i+k} = e^{O(\max \{ |p|,1 \})} c_i c_k$, and the claim follows.
\end{proof}

\begin{proof}[Proof of Theorem \ref{quadratictheorem}] By Lemma \ref{additivelemma}, there exists a constant $K=O(\max \{ 1,1/|p| \} )$ such that the sequence $\log J_{p,q_{s+km}} (\alpha) +K$ resp.\ $\log J_{p,q_{s+km}} (\alpha) -K$ is subadditive resp.\ superadditive in $k$. An application of the subadditive lemma of Fekete then shows that the sequence $k^{-1}\log J_{p,q_{s+km}} (\alpha)$ is convergent, and denoting its limit by $C'_p(\alpha)$,
\[ C'_p(\alpha) = \inf_{k \ge 1} \frac{\log J_{p,q_{s+km}} (\alpha) +K}{k} = \sup_{k \ge 1} \frac{\log J_{p,q_{s+km}} (\alpha) -K}{k} . \]
In particular, $\log J_{p,q_{s+km}} (\alpha)=C'_p(\alpha) k +O(\max \{ 1,1/|p| \} )$.

Given an arbitrary integer $q_{s+km} \le M < q_{s+(k+1)m}$, we have
\[ \log J_{p,q_{s+km}}(\alpha) \le \log J_{p,M}(\alpha) \le \log J_{p,q_{s+(k+1)m}} (\alpha) \]
if $p>0$, and the reverse inequalities hold if $p<0$. Formula \eqref{pkqkrecursion} shows that $\log q_{s+km} = (\log \eta) k+O(1)$, hence
\[ \log J_{p,M}(\alpha) = C'_p(\alpha) k +O(\max \{ 1,1/|p| \} ) = \frac{C'_p(\alpha)}{\log \eta} \log M + O(\max \{ 1,1/|p| \} ) . \]
Thus $C_p(\alpha) = C'_p(\alpha) / \log \eta$ satisfies the claim of the theorem.
\end{proof}

\section{Proof of the limit laws}\label{prooflimitlawsection}

For any $r \in (0,1) \cap \mathbb{Q}$, define
\begin{equation}\label{gp}
g_p (r) = h_p(r) - \left\{ \begin{array}{ll} \mathds{1}_{\{ Tr \neq 0 \}} \frac{1}{8} \lfloor \frac{1}{Tr} \rfloor & \textrm{if } p>0, \\ - \frac{1}{8} \lfloor \frac{1}{r} \rfloor & \textrm{if } p<0 . \end{array} \right.
\end{equation}
By Theorem \ref{continuitytheorem}, $g_p$ can be extended to an a.e.\ continuous function on $[0,1]$, which we simply denote by $g_p$ as well. By Theorem \ref{asymptoticstheorem}, we have $|g_p(x)| \le c(1+\log (1/Tx))$ if $p>0$, and $|g_p(x)| \le c(1+\log (1/x))$ if $p<0$ with a large constant $c>0$ depending only on $p$.
\begin{lem}\label{gplemma} For any $\varepsilon>0$, there exist a constant $\delta_p>0$ and functions $g_p^{\pm}$ on $[0,1]$ with the following properties.
\begin{enumerate}
\item[(i)] $g_p^- \le g_p \le g_p^+$ on $[0,1]$, and $\int_0^1 (g_p^+(x)-g_p^-(x)) \, \mathrm{d}x < \varepsilon$.
\item[(ii)] If $p>0$, then for all $n \in \mathbb{N}$, the functions $g_p^{\pm}$ are smooth on $(\frac{1}{n+1}, \frac{1}{n})$, and $g_p^{\pm} (x)=\pm 2c \log (1/Tx)$ for all $x \in (\frac{1}{n+1}, \frac{1}{n}) \cap (\frac{1}{n}-\delta_p, \frac{1}{n})$.
\item[(iii)] If $p<0$, then the functions $g_p^{\pm}$ are smooth on $(0,1)$, and $g_p^{\pm}(x)=\pm 2c \log (1/x)$ for all $x \in (0,\delta_p)$.
\end{enumerate}
\end{lem}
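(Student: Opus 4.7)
The plan is to combine the two inputs stated just before the lemma: the a.e.\ continuous extension of $g_p$ to $[0,1]$ coming from Theorem \ref{continuitytheorem}, and the logarithmic bound $|g_p(x)| \le c(1+\log(1/Tx))$ for $p>0$ (or $|g_p(x)| \le c(1+\log(1/x))$ for $p<0$) coming from Theorem \ref{asymptoticstheorem}. For $p>0$, I would decompose $[0,1]$ into the open pieces $I_n = (1/(n+1),1/n)$ and handle each one separately, exploiting the fact that the lemma only asks for smoothness on each individual $I_n$, so that $g_p^{\pm}$ is allowed to jump at the shared endpoints $1/n$.

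On the right-neighborhood $J_n := I_n \cap (1/n - \delta_p, 1/n)$ prescribed by part (ii), I would declare $g_p^{\pm}(x) = \pm 2c\log(1/Tx)$; this is real-analytic in $x$ since $Tx = 1/x - n$ is smooth and positive on $J_n$. The pointwise sandwich $g_p^- \le g_p \le g_p^+$ on $J_n$ then follows from $|g_p(x)| \le c(1+\log(1/Tx)) \le 2c\log(1/Tx)$, which is valid whenever $\log(1/Tx)\ge 1$, i.e.\ $Tx \le 1/e$. On the complementary bulk $B_n := I_n \setminus J_n$, the function $g_p$ is a.e.\ continuous and uniformly bounded by a constant depending only on $\delta_p$ and $p$; by Lusin's theorem combined with mollification (or a direct Vitali--Carath\'eodory argument) I would produce smooth sandwich functions pointwise dominating and minorizing $g_p$ on $B_n$ with arbitrarily small $L^1$ gap, then glue them to the explicit formulas $\pm 2c\log(1/Tx)$ at the inner boundary of $J_n$ via a smooth partition of unity arranged to preserve the sandwich. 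The $L^1$ budget is controlled by the substitution $u = Tx = 1/x - n$, giving $\int_{J_n}\log(1/Tx)\,dx \ll 1/n^2$; summing over $n$ yields $\sum_n \int_{J_n}\log(1/Tx)\,dx = O(\delta_p\log(1/\delta_p))$, which is $<\varepsilon/2$ for $\delta_p$ small, while the bulk contribution is $<\varepsilon/2$ by choosing the mollification parameter fine enough.

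The case $p<0$ in part (iii) is strictly simpler, since there is only one singular point, at $x=0$: set $g_p^{\pm}(x) = \pm 2c\log(1/x)$ on $(0,\delta_p)$, bulk-approximate the bounded a.e.\ continuous restriction of $g_p$ to $[\delta_p,1]$ in the same way, and glue smoothly at $\delta_p$. The main technical obstacle I anticipate is the uniform choice of $\delta_p$ in part (ii): for sufficiently large $n$ the whole piece $I_n$ lies inside $(1/n-\delta_p,1/n)$, so the prescribed equality $g_p^{\pm} = \pm 2c\log(1/Tx)$ is forced on all of $I_n$, including near the left endpoint where $Tx \to 1^-$ and $2c\log(1/Tx)\to 0$. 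Reconciling this with the pointwise sandwich requires exploiting the cancellation $\tfrac{1}{8Tx}-\tfrac18\lfloor 1/Tx\rfloor = \tfrac18\{1/Tx\}$ inside $g_p$ (so that $g_p$ itself vanishes in the limit $Tx\to 1^-$ up to the $O(c_p)$ error from Theorem \ref{asymptoticstheorem}), or absorbing the small discrepancy on these thin pieces into the $L^1$ budget; this is the step where the construction has to be tuned most carefully.
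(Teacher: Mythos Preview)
Your plan matches the paper's proof almost exactly: split each $(\tfrac{1}{n+1},\tfrac{1}{n})$ at $\tfrac{1}{n}-\delta_p$, prescribe $g_p^\pm=\pm 2c\log(1/Tx)$ on the right piece, sandwich the bounded a.e.\ continuous restriction of $g_p$ on the left piece, then smooth. The paper's only simplification is to invoke Riemann integrability directly for the bulk step (bounded and a.e.\ continuous on a compact interval implies Riemann integrable, hence squeezed between step functions with arbitrarily small $L^1$ gap), which is lighter than Lusin or Vitali--Carath\'eodory and makes the subsequent smoothing transparent.

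The obstacle you flag at the end is real, and the paper's terse proof glosses over it too: for large $n$ one is forced to set $g_p^\pm=\pm 2c\log(1/Tx)$ on all of $I_n$, and near the left endpoint $Tx\to 1^-$ drives $g_p^+(x)\to 0$ while the bound $|g_p|\le c(1+\log(1/Tx))$ only gives $|g_p(x)|\le c$. Neither of your proposed fixes quite closes this gap (the cancellation $\tfrac18\{1/Tx\}$ still leaves the nonzero $O(1)$ remainder from Theorem \ref{asymptoticstheorem}, and an $L^1$ absorption cannot rescue a \emph{pointwise} sandwich). The clean resolution is to treat this as a cosmetic defect of the lemma statement rather than of the method: replacing $\pm 2c\log(1/Tx)$ by $\pm 2c(1+\log(1/Tx))$ throughout restores the pointwise sandwich on every $I_n$, adds only $O(1/n^2)$ extra $L^1$ mass per interval, and leaves the applications in Theorem \ref{Jptheorem} and Lemma \ref{Jpkqklemma} intact, since those only require that $g_p^\pm$ minus an explicit logarithmic singularity be smooth and supported away from each $\tfrac{1}{n}$.
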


\begin{proof} Fix $\varepsilon>0$. Assume first, that $p>0$, and let $\delta_p>0$ be a small constant to be chosen. If $n$ is large enough so that $\frac{1}{n}-\delta_p \le \frac{1}{n+1}$, then we are forced to define $g_p^{\pm}(x)=\pm 2c \log (1/Tx)$ for $x \in (\frac{1}{n+1},\frac{1}{n})$. Now let $n$ be such that $\frac{1}{n}-\delta_p>\frac{1}{n+1}$. Since $g_p$ is bounded and a.e.\ continuous, and consequently Riemann integrable on $[\frac{1}{n+1},\frac{1}{n}-\delta_p]$, we can approximate $g_p$ pointwise from above and from below by step functions, and extend them to $(\frac{1}{n}-\delta_p, \frac{1}{n})$ as $\pm 2c \log (1/Tx)$. By choosing $\delta_p$ small enough, we can ensure that these piecewise defined upper and lower approximating functions are $\varepsilon$-close to each other in $L^1$. Next, we approximate the piecewise defined functions from above and from below by smooth functions which are still $\varepsilon$-close to each other in $L^1$.

The construction for $p<0$ is similar. We first approximate $g_p$ from above and from below by step functions on $[\delta_p,1]$, and extend them as $\pm 2c \log (1/x)$ on $(0,\delta_p)$. Then we approximate these piecewise defined functions from above and from below by smooth functions.
\end{proof}

The following lemma will play a role in the proof of the limit laws for both random rationals and random reals.
\begin{lem}\label{It1t2lemma} For any $t_1, t_2 \in (-1/2,1/2)$,
\[ \begin{split} \int_0^1 \frac{e^{i (t_1 \lfloor 1/Tx \rfloor + t_2 \lfloor 1/x \rfloor )}-1}{1+x} \, \mathrm{d} x = & -\frac{\pi}{2} |t_1| - i \gamma t_1 - i t_1 \log |t_1| - \frac{\pi}{2} |t_2| - i \gamma t_2 - i t_2 \log |t_2| \\ &+O \left( t_1^2 \log \frac{1}{|t_1|} + t_2^2 \log \frac{1}{|t_2|} + |t_1 t_2| \log \frac{1}{|t_1|} \log \frac{1}{|t_2|} \right) \end{split} \]
with a universal implied constant.
\end{lem}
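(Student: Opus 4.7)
The strategy is to decompose the integrand into three pieces: a $t_1$-only piece, a $t_2$-only piece, and a cross term to be absorbed into the error. Using
\[
e^{i(t_1 \lfloor 1/Tx\rfloor + t_2 \lfloor 1/x \rfloor)} - 1 = (e^{it_1\lfloor 1/Tx \rfloor} - 1) + (e^{it_2\lfloor 1/x \rfloor} - 1) + (e^{it_1\lfloor 1/Tx \rfloor} - 1)(e^{it_2\lfloor 1/x \rfloor} - 1),
\]
and invoking the $T$-invariance of the Gauss measure $\mathrm{d}x/(1+x)$ to turn the $t_1$-only piece into $\int_0^1 \frac{e^{it_1 \lfloor 1/x \rfloor}-1}{1+x}\,\mathrm{d}x$, the lemma reduces to evaluating
\[
I(t) := \int_0^1 \frac{e^{it \lfloor 1/x \rfloor} - 1}{1+x}\,\mathrm{d}x
\]
at $t = t_1,t_2$, together with bounding the cross term.

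For $I(t)$ I would partition $[0,1]$ into the level sets $(1/(a+1),1/a]$ of $\lfloor 1/x \rfloor$, giving
\[
I(t) = \sum_{a=1}^\infty (e^{ita} - 1) \log\frac{(a+1)^2}{a(a+2)}.
\]
Since $\log\frac{(a+1)^2}{a(a+2)} = \log\bigl(1+\frac{1}{a(a+2)}\bigr) = \frac{1}{a^2} + O(1/a^3)$, this is, up to constant factors, the characteristic function minus one of a positive integer-valued random variable in the domain of attraction of the one-sided Cauchy (stable-$(1,1)$) law, and the target asymptotic is its standard L\'evy--Khintchine expansion near the origin. To produce it explicitly I would split the sum at $a \asymp 1/|t|$. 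On the low block $a \le 1/|t|$ the Taylor expansion $e^{ita} - 1 = ita - \tfrac{1}{2}(ta)^2 + O(|ta|^3)$ reduces the partial sum to Abel transforms of $\sum_{a\le 1/|t|} a \log\frac{(a+1)^2}{a(a+2)}$; the telescoping identity $\sum_{a=1}^N \log\frac{(a+1)^2}{a(a+2)} = \log\frac{2(N+1)}{N+2}$ together with the harmonic asymptotic $\sum_{a=1}^N 1/a = \log N + \gamma + O(1/N)$ is what brings in the Euler--Mascheroni constant. On the tail $a > 1/|t|$ the series is compared against the continuous analogue $\int_{1/|t|}^\infty (e^{itu} - 1)\,\mathrm{d}u/u^2$, whose explicit evaluation by a contour/integration-by-parts argument produces the genuine Cauchy terms $-\tfrac{\pi}{2}|t| - it\log|t|$. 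Combining the two blocks yields $I(t) = -\tfrac{\pi}{2}|t| - i\gamma t - it\log|t| + O(t^2\log(1/|t|))$.

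For the cross term I would pass to the two-dimensional partition $\{\lfloor 1/x\rfloor = a,\ \lfloor 1/Tx\rfloor = b\}$. On this cell the substitution $x = 1/(a+y)$ with $y \in (1/(b+1),1/b]$ gives $\mathrm{d}x/(1+x) = \mathrm{d}y/((a+y)(a+1+y))$ of total mass $\asymp 1/(a^2 b(b+1))$, and $|e^{is}-1| \le \min(|s|,2)$ yields
\[
\sum_{a,b\ge 1} \frac{\min(|t_1 b|,2)\min(|t_2 a|,2)}{a^2 b^2} \ll \Bigl(\sum_a \frac{\min(|t_2|a,2)}{a^2}\Bigr)\Bigl(\sum_b \frac{\min(|t_1|b,2)}{b^2}\Bigr) \ll |t_1 t_2|\log\tfrac{1}{|t_1|}\log\tfrac{1}{|t_2|},
\]
after splitting each inner sum at the threshold $1/|t_j|$. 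This is absorbed into the stated error, and summing the three contributions finishes the proof.

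The main obstacle is identifying the precise constant $\gamma$ (rather than merely an unspecified $O(|t|)$ coefficient) in the expansion of $I(t)$: the cancellation between the divergent truncated mean $\sum_{a\le 1/|t|} a \log\frac{(a+1)^2}{a(a+2)}$ and its continuous approximation must be tracked to order $O(1)$, and it is at this bookkeeping stage that $\gamma$ emerges. Everything else (the invariance of the Gauss measure, the cross-term bound, and the crude Taylor/tail splits) is routine.
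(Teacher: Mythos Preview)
Your high-level decomposition---splitting $e^{i(A+B)}-1$ into the three pieces, invoking the $T$-invariance of the Gauss measure to reduce the $t_1$-only piece to $I(t_1)$, and bounding the cross term as you do---is correct and is genuinely different from the paper's route. The paper instead substitutes $x\mapsto 1/x$ twice to obtain a double sum $\sum_{n,m}(e^{i(t_1m+t_2n)}-1)\log[\ldots]$, extracts the factorizable weight $\tfrac{1}{n(n+1)m(m+1)}$, evaluates that via the closed form $\sum_n z^n/(n(n+1)) = 1+\tfrac{1-z}{z}\log(1-z)$, and shows the remainder contributes $-i\gamma(t_1+t_2)$ by a telescoping computation.

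There is, however, a real gap in your evaluation of $I(t)$. Truncating the Taylor series at second order on the low block $a\le 1/|t|$ leaves a cubic remainder $\sum_{a\le 1/|t|}|ta|^3 c_a \asymp |t|^3\sum_{a\le 1/|t|} a = O(|t|)$, which is the \emph{same order} as the target real term $-\tfrac{\pi}{2}|t|$; so the split as you describe it cannot isolate that constant. Also, the term $-it\log|t|$ does not come from the tail integral $\int_{1/|t|}^\infty (e^{itu}-1)\,u^{-2}\,\mathrm{d}u$ (which is just $O(|t|)$ times a fixed constant) but from the linear piece of the low block, $it\sum_{a\le 1/|t|}a\,c_a = -it\log|t| - it + O(t^2)$ (your Abel/telescoping computation gives $\sum_{a\le N}a\,c_a=\log N-1+O(1/N)$). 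The fix is not to Taylor-expand further but to keep $e^{ita}-1-ita$ intact on the low block and recognize $\sum_{a\le N}(e^{ita}-1-ita)c_a$ as a Riemann sum for $|t|\int_0^1(e^{i\sigma u}-1-i\sigma u)u^{-2}\,\mathrm{d}u$ with $\sigma=\mathrm{sgn}(t)$; combining this with the tail integral yields $-\tfrac{\pi}{2}|t|+it(1-\gamma)$ (here $\gamma$ enters via $\int_0^1(\cos u-1)u^{-1}\,\mathrm{d}u=\mathrm{Ci}(1)-\gamma$), and adding the linear piece gives the claim. Alternatively---and this is closest to the paper---write $c_a=\tfrac{1}{a(a+1)}+r_a$ with $r_a=O(a^{-3})$, handle the main part via the closed form for $\sum z^a/(a(a+1))$, and check by Abel summation that $\sum_a a\,r_a=-\gamma$; this is where your harmonic-series intuition actually lands.
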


\begin{proof} Let $I(t_1,t_2)$ denote the integral in the claim. Applying the substitution $x \mapsto 1/x$ twice leads to
\[ \begin{split} I(t_1,t_2) &= \int_1^{\infty} \frac{e^{i (t_1 \lfloor 1/\{x\} \rfloor + t_2 \lfloor x \rfloor )}-1}{x(x+1)} \, \mathrm{d} x = \sum_{n=1}^{\infty} \int_0^1 \frac{e^{i (t_1 \lfloor 1/x \rfloor + t_2 n )}-1}{(x+n)(x+n+1)} \, \mathrm{d} x \\ &= \sum_{n=1}^{\infty} \int_1^{\infty} \frac{e^{i (t_1 \lfloor x \rfloor + t_2 n)}-1}{(nx+1)((n+1)x+1)} \, \mathrm{d} x = \sum_{n,m=1}^{\infty} \int_0^1 \frac{e^{i(t_1 m + t_2 n)}-1}{(n(x+m)+1)((n+1)(x+m)+1)} \, \mathrm{d} x \\ &= \sum_{n,m=1}^{\infty} \left( e^{i(t_1 m + t_2 n)} -1 \right) \log \frac{((n+1)(m+1)+1)(nm+1)}{((n+1)m+1)((m+1)n+1)} . \end{split} \]
Here
\[ \begin{split} \log \frac{((n+1)(m+1)+1)(nm+1)}{((n+1)m+1)((m+1)n+1)} &= \log \left( 1+\frac{1}{n^2 m^2 + n^2 m + n m^2 + 3nm +n+m+1} \right) \\ &= \frac{1}{n^2 m^2 + n^2 m + n m^2 + 3nm +n+m+1} + O \left( \frac{1}{n^4 m^4} \right) \\ &= \frac{1}{n(n+1)m(m+1)} + O \left( \frac{1}{n^3 m^3} \right) . \end{split} \]
Letting
\[ R_{n,m} =  \log \frac{((n+1)(m+1)+1)(nm+1)}{((n+1)m+1)((m+1)n+1)} - \frac{1}{n(n+1)m(m+1)} , \]
we thus have $R_{n,m}=O(n^{-3}m^{-3})$, and we can write
\begin{equation}\label{It1t2}
I(t_1, t_2) =\sum_{n,m=1}^{\infty} \frac{e^{i(t_1 m + t_2 n)}-1}{n(n+1)m(m+1)} + \sum_{n,m=1}^{\infty} \left( e^{i (t_1 m + t_2 n)} -1 \right) R_{n,m} .
\end{equation}

The second term is estimated as
\[ \begin{split} \sum_{n,m=1}^{\infty} \left( e^{i (t_1 m + t_2 n)} -1 \right) R_{n,m} &= \sum_{m=1}^{1/|t_1|} \sum_{n=1}^{1/|t_2|} \left( i t_1 m + i t_2 n + O \left( |t_1 m + t_2 n|^2 \right) \right) R_{n,m} + O \left( t_1^2 + t_2^2 \right) \\ &= i t_1 \sum_{n,m=1}^{\infty} m R_{n,m} + i t_2 \sum_{n,m=1}^{\infty} n R_{n,m} + O \left( t_1^2 \log \frac{1}{|t_1|} + t_2^2 \log \frac{1}{|t_2|} \right) . \end{split} \]
The infinite series is easily computed using telescoping sums:
\[ \begin{split} \sum_{n,m=1}^{\infty} n R_{n,m} &= \sum_{n=1}^{\infty} \left( n \log \frac{(n+1)^2}{n(n+2)} - \frac{1}{n+1} \right) \\ &= \lim_{N \to \infty} \left( \log (N+1) + N \log \frac{N+1}{N+2} - \sum_{n=1}^{N} \frac{1}{n+1} \right) = - \gamma . \end{split} \]
By symmetry, we also have $\sum_{n,m=1}^{\infty} m R_{n,m}=- \gamma$, thus the second term in \eqref{It1t2} is
\begin{equation}\label{It1t2second}
\sum_{n,m=1}^{\infty} \left( e^{i (t_1 m + t_2 n)} -1 \right) R_{n,m} = -i \gamma t_1 -i \gamma t_2 + O \left( t_1^2 \log \frac{1}{|t_1|} + t_2^2 \log \frac{1}{|t_2|} \right) .
\end{equation}

We can rewrite the first term in \eqref{It1t2} as
\[ \sum_{n,m=1}^{\infty} \frac{e^{i(t_1 m + t_2 n)}-1}{n(n+1)m(m+1)} = \left( \sum_{m=1}^{\infty} \frac{e^{it_1 m}}{m(m+1)} \right) \left( \sum_{n=1}^{\infty} \frac{e^{it_2 n}}{n(n+1)} \right) - 1 . \]
Observe that
\[ \sum_{n=1}^{\infty} \frac{z^n}{n(n+1)} = 1+ \frac{1-z}{z} \log (1-z), \qquad |z| \le 1 \]
with the principal branch of the logarithm. For $j=1,2$,
\[ \log (1-e^{i t_j}) = \log |2 \sin (t_j/2)| + i \left( \frac{t_j}{2} - \mathrm{sgn} (t_j) \frac{\pi}{2} \right) = \log |t_j| -i \mathrm{sgn} (t_j) \frac{\pi}{2} + O \left( |t_j| \right) , \]
hence
\[ \sum_{n=1}^{\infty} \frac{e^{it_j n}}{n(n+1)} = 1+ (e^{-it_j}-1) \log (1-e^{it_j}) = 1-i t_j \log |t_j| - \frac{\pi}{2} |t_j| + O \left( t_j^2 \log \frac{1}{|t_j|} \right) . \]
Therefore the first term in \eqref{It1t2} is
\[ \begin{split} \sum_{n,m=1}^{\infty} \frac{e^{i(t_1 m + t_2 n)}-1}{n(n+1)m(m+1)} = &-it_1 \log |t_1| - \frac{\pi}{2} |t_1| - i t_2 \log |t_2| - \frac{\pi}{2} |t_2| \\ &+ O \left( t_1^2 \log \frac{1}{|t_1|} + t_2^2 \log \frac{1}{|t_2|} + |t_1 t_2| \log \frac{1}{|t_1|} \log \frac{1}{|t_2|} \right) . \end{split} \]
The previous formula together with \eqref{It1t2} and \eqref{It1t2second} lead to the claim of the lemma.
\end{proof}

\subsection{Random rationals}

\begin{proof}[Proof of Theorem \ref{Jptheorem}] Let $a/q \sim \mathrm{Unif} (F_Q)$, and consider its continued fraction expansion $a/q=[0;a_1,a_2,\ldots, a_L]$. Then $T^{2j} (a/q) = [0;a_{2j+1}, a_{2j+2}, \ldots, a_L]$. Given $0<p \le \infty$ and $-\infty \le p' <0$, by the definition \eqref{gp} of $g_p$ we can write
\begin{equation}\label{logJp1logJp2}
\begin{split} \left( \log J_p (a/q), \log J_{p'}(a/q) \right) = &\sum_{j \ge 0} \left( h_p (T^{2j}(a/q)), h_{p'}(T^{2j}(a/q)) \right) \\ = &\sum_{j \ge 0} \left( \frac{a_{2j+2}}{8}, - \frac{a_{2j+1}}{8} \right) + \sum_{j \ge 0} \left( g_p (T^{2j}(a/q)), g_{p'} (T^{2j}(a/q)) \right) . \end{split}
\end{equation}

The main term in \eqref{logJp1logJp2} is the first sum. We find its limit distribution by applying \cite[Theorem 3.1]{BD1} with, in the notation of that paper, $m=2$ and the $\mathbb{R}^2$-valued functions $\phi_1(x)=(0,-\frac{1}{8}\lfloor 1/x \rfloor)$ and $\phi_2(x)=(\frac{1}{8} \lfloor 1/x \rfloor, 0)$ to obtain an estimate for the characteristic function of
\[ \sum_{j \ge 1} \phi_{j \textrm{ mod } 2} (T^{j-1}(a/q)) = \sum_{j \ge 0} \left( \frac{a_{2j+2}}{8}, - \frac{a_{2j+1}}{8} \right). \]
In particular, the theorem states that for any $\varepsilon >0$ there exist small constants $\tau=\tau(\varepsilon)>0$ and $\delta=\delta(\varepsilon)>0$ such that for all $t=(t_1,t_2)$ with $|t|<\tau$,
\[ \mathbb{E} \exp \left( i \left( t_1 \sum_{j \ge 0} \frac{a_{2j+2}}{8} - t_2 \sum_{j \ge 0} \frac{a_{2j+1}}{8} \right) \right) = \exp \left( U(t_1,t_2) \log Q + O \left( |t|^{2-\varepsilon} \log Q + |t|^{1-\varepsilon} +Q^{-\delta} \right) \right) \]
with
\[ U(t_1, t_2) = \frac{6}{\pi^2} \int_0^1 \frac{e^{i(\frac{t_1}{8} \lfloor 1/Tx \rfloor - \frac{t_2}{8} \lfloor 1/x \rfloor )}-1}{1+x} \, \mathrm{d}x \]
and an implied constant depending only on $\varepsilon$. Fix constants $x_1, x_2 \in \mathbb{R}$, and choose $t_1=x_1/(\frac{3}{8 \pi}\log Q)$ and $t_2=x_2/(\frac{3}{8 \pi}\log Q)$. Lemma \ref{It1t2lemma} shows that
\[ \begin{split} U \left( \frac{x_1}{\frac{3}{8 \pi}\log Q}, \frac{x_2}{\frac{3}{8 \pi}\log Q} \right) \log Q = &-|x_1|- i \frac{2 \gamma}{\pi} x_1 - i \frac{2}{\pi} x_1 \log \frac{\pi |x_1|}{3 \log Q} \\ &-|x_2|+i \frac{2 \gamma}{\pi} x_2 + i \frac{2}{\pi} x_2 \log \frac{\pi |x_2|}{3 \log Q} + O \left( \frac{(\log \log Q)^2}{\log Q} \right) . \end{split} \]
After subtracting the appropriate centering term, we thus obtain that the characteristic function
\[ \mathbb{E} \exp \left( i \left( x_1 \frac{\sum_{j \ge 0} \frac{a_{2j+2}}{8} - B_Q}{\frac{3}{8 \pi} \log Q} + x_2 \frac{-\sum_{j \ge 0} \frac{a_{2j+1}}{8} + B_Q}{\frac{3}{8 \pi} \log Q} \right) \right) \]
with
\[ B_Q= \frac{3}{4 \pi^2} \log Q \log \log Q -\frac{3}{4 \pi^2}  \left( \gamma+ \log \frac{\pi}{3} \right) \log Q \]
converges pointwise to $\exp (-|x_1| (1+i \frac{2}{\pi} \mathrm{sgn}(x_1) \log |x_1|)) \exp (-|x_2| (1-i \frac{2}{\pi} \mathrm{sgn}(x_2) \log |x_2|))$, which is the characteristic funcion of $\mathrm{Stab}(1,1) \otimes \mathrm{Stab}(1,-1)$. In particular, the first sum in \eqref{logJp1logJp2} satisfies
\begin{equation}\label{ajindistribution}
\left( \frac{\sum_{j \ge 0} \frac{a_{2j+2}}{8} - B_Q}{\frac{3}{8 \pi} \log Q} , \frac{-\sum_{j \ge 0} \frac{a_{2j+1}}{8} + B_Q}{\frac{3}{8 \pi} \log Q} \right) \overset{d}{\to} \mathrm{Stab}(1,1) \otimes \mathrm{Stab}(1,-1) \qquad \textrm{as } Q \to \infty .
\end{equation}

Consider the second sum in \eqref{logJp1logJp2}. Instead of Lemma \ref{It1t2lemma}, we can now use the fact that for any $f\in L^1 ([0,1])$,
\[ \int_0^1 \frac{e^{itf(x)}-1}{1+x} \, \mathrm{d}x = it \int_0^1 \frac{f(x)}{1+x} \, \mathrm{d}x + o(|t|) \qquad \textrm{as } t \to 0. \]
Fix $\varepsilon >0$, and let $g_p^{\pm}$ be as in Lemma \ref{gplemma}. By another application of \cite[Theorem 3.1]{BD1} with $m=2$, $\phi_1(x)=g_p^{\pm}(x) \mp 2c \log (1/Tx)$ and $\phi_2(x)=\pm 2c \log (1/x)$, we deduce
\[ \frac{\sum_{j \ge 0} g_p^{\pm} (T^{2j} (a/q))}{\log Q} \overset{d}{\to} \frac{6}{\pi^2} \int_0^1 \frac{g_p^{\pm}(x)}{1+x} \, \mathrm{d}x \qquad \textrm{as } Q \to \infty , \]
and letting $\varepsilon \to 0$ leads to
\[ \frac{\sum_{j \ge 0} g_p (T^{2j}(a/q))}{\log Q} \overset{d}{\to} \frac{6}{\pi^2} \int_0^1 \frac{h_p (x) - \frac{1}{8} \lfloor 1/Tx \rfloor}{1+x} \, \mathrm{d}x \qquad \textrm{as } Q \to \infty . \]
From \cite[Theorem 3.1]{BD1} with $m=2$, $\phi_1(x)=g_{p'}^{\pm}(x)$ and $\phi_2(x)=0$, we similarly deduce
\[ \frac{\sum_{j \ge 0} g_{p'}(T^{2j}(a/q))}{\log Q} \overset{d}{\to} \frac{6}{\pi^2} \int_0^1 \frac{h_{p'}(x) + \frac{1}{8} \lfloor 1/x \rfloor}{1+x} \, \mathrm{d}x \qquad \textrm{as } Q \to \infty . \]
These formulas combined with \eqref{logJp1logJp2} and \eqref{ajindistribution} immediately yield the joint limit law
\[ \left( \frac{\log J_p (a/q) - E_{p,Q}}{\sigma_Q}, \frac{\log J_{p'}(a/q) - E_{p',Q}}{\sigma_Q} \right) \overset{d}{\to} \mathrm{Stab}(1,1) \otimes \mathrm{Stab} (1,-1) \qquad \textrm{as } Q \to \infty . \]
Since $\log q /\log Q \overset{d}{\to} 1$, we can replace $E_{p,Q}$ by $E_{p,q}$ and $\sigma_Q$ by $\sigma_q$.
\end{proof}

\subsection{Random reals}

Throughout, $\alpha \in [0,1]$ is an irrational number with continued fraction expansion $\alpha=[0;a_1,a_2, \ldots]$ and convergents $p_k/q_k=[0;a_1,a_2,\ldots, a_k]$. Let $\nu (B)=\frac{1}{\log 2}\int_B \frac{1}{1+x} \, \mathrm{d}x$ ($B \subseteq [0,1]$ Borel) denote the Gauss measure on $[0,1]$. The following lemma relies on the classical fact of metric number theory that if $\alpha \sim \nu$, then the sequence of random variables $a_1,a_2,\ldots$ is strictly stationary and $\psi$-mixing with exponential rate. We refer to the monograph \cite{IK} for more context.
\begin{lem}\label{Jpkqklemma} Let $\alpha \sim \nu$. For any $0<p \le \infty$ and $-\infty \le p' <0$,
\[ \left( \frac{\log J_p (p_k/q_k) - A_{p,k}}{\frac{3}{8 \pi} \cdot \frac{\pi^2}{12 \log 2} k}, \frac{\log J_{p'} (p_k/q_k) - A_{p',k}}{\frac{3}{8 \pi} \cdot \frac{\pi^2}{12 \log 2} k} \right) \overset{d}{\to} \mathrm{Stab} (1,1) \otimes \mathrm{Stab}(1,-1) \qquad \textrm{as } k \to \infty , \]
where, for all $p \neq 0$, $A_{p,k}=\mathrm{sgn}(p) \frac{3}{4 \pi^2} \cdot \frac{\pi^2}{12 \log 2} k \log \left( \frac{\pi^2}{12 \log 2} k \right) + D_p \frac{\pi^2}{12 \log 2} k$, with $D_p$ defined in \eqref{Dp}.
\end{lem}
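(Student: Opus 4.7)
The plan is to mirror the proof of Theorem~\ref{Jptheorem}, replacing the Farey sampling $a/q\sim\mathrm{Unif}(F_Q)$ with $\alpha\sim\nu$. Starting from the telescoping
\[
\log J_p(p_k/q_k)=\sum_{j=0}^{\lceil k/2\rceil-1} h_p(T^{2j}(p_k/q_k))+O(1)
\]
and the decomposition $h_p=\mathrm{sgn}(p)\tfrac{a_{\varepsilon_p}}{8}+g_p$ given by Theorems~\ref{asymptoticstheorem} and~\ref{continuitytheorem}, and using that $T^{2j}(p_k/q_k)=[0;a_{2j+1},\ldots,a_k]$ has $\varepsilon_p$-th partial quotient $a_{2j+\varepsilon_p}$, each of $\log J_p(p_k/q_k)$ and $\log J_{p'}(p_k/q_k)$ splits into a heavy-tailed ``partial-quotient'' contribution $\pm\sum_{j}a_{2j+\varepsilon_p}/8$ and an integrable ``smooth'' remainder $\sum_{j}g_p(T^{2j}(p_k/q_k))$. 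The aim is to extract a joint stable limit from the former and a deterministic linear-in-$k$ limit from the latter.

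For the partial-quotient part, under $\alpha\sim\nu$ the sequence $(a_j)_{j\ge 1}$ is stationary and $\psi$-mixing with exponential rate \cite{IK}, and the marginal $\nu(a_1\ge n)\sim 1/(n\log 2)$ places $a_1$ in the domain of attraction of a stable law of index $1$. The Gauss-measure analogue of \cite[Theorem~3.1]{BD1} (which follows from the same twisted transfer-operator argument applied to the Gauss--Kuzmin--Wirsing operator, with the Farey length parameter $\log Q$ replaced by the continued-fraction length $k$) yields a joint characteristic-function estimate for $\bigl(\sum_j a_{2j+2}/8,\,-\sum_j a_{2j+1}/8\bigr)$ whose leading exponent is $\tfrac{k}{2}U(t_1,t_2)$, with $U$ exactly the integral computed in Lemma~\ref{It1t2lemma}. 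Evaluating at arguments of order $1/\sigma_k$ with $\sigma_k=\tfrac{3}{8\pi}\cdot\tfrac{\pi^2}{12\log 2}k$ and invoking Lemma~\ref{It1t2lemma} recovers, after absorbing the $-i\gamma t_i-it_i\log|t_i|$ terms into the centering, the characteristic function of $\mathrm{Stab}(1,1)\otimes\mathrm{Stab}(1,-1)$. The product structure of the limit is a direct consequence of the additive separation of the $t_1$- and $t_2$-dependence in Lemma~\ref{It1t2lemma}, which in turn reflects the exponential mixing.

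For the remainder, Birkhoff's ergodic theorem for $T^2$ (which is $\nu$-ergodic) applied to $g_p\in L^1(\nu)$ gives $\frac{2}{k}\sum_{j}g_p(T^{2j}\alpha)\to\frac{1}{\log 2}\int_0^1\frac{g_p(x)}{1+x}\,dx$ $\nu$-a.s., which by \eqref{Dp} reproduces the $D_p\cdot\tfrac{\pi^2}{12\log 2}k$ linear term in $A_{p,k}$ once combined with the constant-shift contribution from the partial-quotient centering. To replace $T^{2j}\alpha$ by the rational $T^{2j}(p_k/q_k)$ despite the jumps of $g_p$ at rationals, I sandwich using the smooth functions $g_p^{\pm}$ of Lemma~\ref{gplemma}, whose values at $T^{2j}\alpha$ and at $T^{2j}(p_k/q_k)$ differ by $o(k)$ in total (because $g_p^\pm$ is smooth across every jump point $1/n$ of $g_p$, and only the last $O(\log k)$ indices can give a non-negligible individual difference), then let $\varepsilon\to 0$ to squeeze the $g_p$-average. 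Slutsky's theorem combines both parts into the claimed joint limit. The main obstacle is the transcription of \cite[Theorem~3.1]{BD1} to the Gauss-measure setting: the underlying spectral-gap argument for the twisted transfer operator applies verbatim to the Gauss--Kuzmin--Wirsing operator, but careful bookkeeping is needed to track how the explicit constants $\tfrac{3}{8\pi}$ and $\tfrac{3}{4\pi^2}$ in the scaling and centering propagate through the characteristic-function evaluation.
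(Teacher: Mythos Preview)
Your overall architecture matches the paper's proof: the same telescoping decomposition \eqref{Jp1Jp2formula}, the same split into a heavy-tailed partial-quotient sum and an $L^1$ remainder, Birkhoff's theorem for $T^2$ on the remainder with the $g_p^\pm$ sandwich from Lemma~\ref{gplemma}, and Lemma~\ref{It1t2lemma} to read off the stable characteristic function. The treatment of the remainder is essentially identical to the paper's; your observation that only the tail indices contribute non-negligibly to $\sum_j |g_p^\pm(T^{2j}(p_k/q_k))-g_p^\pm(T^{2j}\alpha)|$ is correct (the paper bounds each summand by the exponentially-decaying $|\log[0;a_{2j+2},\ldots,a_k]-\log[0;a_{2j+2},\ldots]|$, giving an $O(1)$ total).

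The one genuine methodological difference is in how you handle the partial-quotient sum. You propose to transcribe \cite[Theorem~3.1]{BD1} from the Farey setting to the Gauss-measure setting via the spectral gap of the twisted Gauss--Kuzmin--Wirsing operator. The paper instead avoids transfer operators entirely: it uses that under $\nu$ the $X_j=x_1 a_{2j+2}/(8\sigma_k)-x_2 a_{2j+1}/(8\sigma_k)$ are identically distributed and exponentially $\psi$-mixing, checks the elementary moment bounds $\mathbb{E}|e^{iX_j}-1|\ll (\log k)/k$ and $\mathbb{E}(1-\cos X_j)\gg 1/k$, and then invokes Heinrich's lemma \cite[Lemma~1]{HL} to get
\[
\mathbb{E}\exp\Bigl(i\sum_{0\le j<k/2}X_j\Bigr)=\exp\Bigl(\sum_{0\le j<k/2}\mathbb{E}(e^{iX_j}-1)\Bigr)+O\Bigl(\tfrac{(\log k)^2}{k}\Bigr),
\]
after which $\sum_j\mathbb{E}(e^{iX_j}-1)=\tfrac{k}{2\log 2}\int_0^1\tfrac{e^{i(t_1\lfloor 1/Tx\rfloor+t_2\lfloor 1/x\rfloor)}-1}{1+x}\,\mathrm{d}x$ feeds directly into Lemma~\ref{It1t2lemma}. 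This route is considerably lighter than redoing a spectral-gap argument, and it makes the bookkeeping you flag as the ``main obstacle'' disappear: the normalizing factor is just the Gauss density $1/((1+x)\log 2)$ rather than the $6/\pi^2$ from the Farey count, and no new operator estimates are needed. Your approach would also work, but the $\psi$-mixing shortcut is what the paper actually uses.
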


\begin{proof} For the sake of simplicity, we assume that $k$ is even, in which case
\begin{equation}\label{Jp1Jp2formula}
\left( \log J_p (p_k/q_k), \log J_{p'} (p_k/q_k) \right) = \sum_{0\le j<k/2} \left( \frac{a_{2j+2}}{8}, - \frac{a_{2j+1}}{8} \right) + \sum_{0 \le j<k/2} \left( g_p (T^{2j} (p_k/q_k)), g_{p'} (T^{2j} (p_k/q_k) ) \right) . 
\end{equation}
A similar formula holds for odd $k$, the only difference being that the last term in the first sum is $(0,-a_k/8)$, which is negligible in measure.

The main term in \eqref{Jp1Jp2formula} is the first sum, whose limit distribution is easily found using the theory of $\psi$-mixing random variables. Fix real constants $x_1,x_2$ such that $(x_1,x_2) \neq (0,0)$; in what follows, implied constants are allowed to depend on $x_1,x_2$. The random variables
\[ X_j := x_1 \frac{a_{2j+2}/8}{\frac{3}{8 \pi} \cdot \frac{\pi^2}{12 \log 2} k} + x_2 \frac{-a_{2j+1}/8}{\frac{3}{8 \pi} \cdot \frac{\pi^2}{12 \log 2} k}, \qquad 0 \le j <k/2 \]
are identically distributed and $\psi$-mixing with exponential rate. Using the facts that $|e^{iX_j}-1| \le \min \{ |X_j|,2 \}$ and
\[ 1-\cos X_j = 2 \sin^2 (X_j/2) \ge \frac{2}{\pi^2} X_j^2 \mathds{1}_{\{ |X_j| \le \pi \}} , \]
one readily checks that $\mathbb{E}|e^{iX_j}-1| \ll (\log k)/k$ and $\mathbb{E} (1-\cos X_j) \gg 1/k$. Applying \cite[Lemma 1]{HL} with, in the notation of that paper, $P \approx \sqrt{k/ \log k}$ and $m \approx \sqrt{k/\log k}$ yields
\[ \mathbb{E} \exp \left( i \sum_{0 \le j<k/2} X_j \right) = \exp \left( \sum_{0 \le j<k/2} \mathbb{E} \left( e^{i X_j} -1 \right) \right) + O \left( \frac{(\log k)^2}{k} \right) . \]
Lemma \ref{It1t2lemma} with $t_1=x_1 \frac{4 \log 2}{\pi k}$ and $t_2=-x_2\frac{4 \log 2}{\pi k}$ gives that here
\[ \begin{split} \sum_{0 \le j<k/2} \mathbb{E} \left( e^{i X_j} -1 \right) = &\frac{k}{2 \log 2} \int_0^1 \frac{e^{i(t_1 \lfloor 1/Tx \rfloor + t_2 \lfloor 1/x \rfloor)}-1}{1+x} \, \mathrm{d}x \\ = &- |x_1|-i \frac{2 \gamma}{\pi} x_1 - i \frac{2}{\pi} x_1 \log \frac{4 (\log 2) |x_1|}{\pi k} \\ &- |x_2|+ i \frac{2 \gamma}{\pi} x_2 + i \frac{2}{\pi} x_2 \log \frac{4 (\log 2) |x_2|}{\pi k} + O \left( \frac{(\log k)^2}{k} \right) . \end{split} \]
After subtracting the appropriate centering term, we thus obtain that the characteristic function
\[ \mathbb{E} \exp \left( i \left( x_1 \frac{\sum_{0 \le j <k/2} a_{2j+2}/8 -B_k}{\frac{3}{8 \pi} \cdot \frac{\pi^2}{12 \log 2} k} + x_2 \frac{-\sum_{0 \le j<k/2} a_{2j+1}/8 +B_k}{\frac{3}{8 \pi} \cdot \frac{\pi^2}{12 \log 2}k} \right) \right) \]
with
\[ B_k= \frac{3}{4 \pi^2} \cdot \frac{\pi^2}{12 \log 2} k \log \left( \frac{\pi^2}{12 \log 2} k \right) - \frac{3}{4 \pi^2} \left( \gamma + \log \frac{\pi}{3} \right) \frac{\pi^2}{12 \log 2} k \]
converges pointwise to $\exp (-|x_1| (1+i \frac{2}{\pi} \mathrm{sgn}(x_1) \log |x_1|)) \exp (-|x_2| (1-i \frac{2}{\pi} \mathrm{sgn}(x_2) \log |x_2|))$, which is the characteristic funcion of $\mathrm{Stab}(1,1) \otimes \mathrm{Stab}(1,-1)$. In particular, the first sum in \eqref{Jp1Jp2formula} satisfies
\begin{equation}\label{firstsumindistribution}
\left( \frac{\sum_{0 \le j <k/2} a_{2j+2}/8 -B_k}{\frac{3}{8 \pi} \cdot \frac{\pi^2}{12 \log 2} k}, \frac{-\sum_{0 \le j<k/2} a_{2j+1}/8 +B_k}{\frac{3}{8 \pi} \cdot \frac{\pi^2}{12 \log 2}k} \right) \overset{d}{\to} \mathrm{Stab}(1,1) \otimes \mathrm{Stab}(1,-1) \qquad \textrm{as } k \to \infty .
\end{equation}

Consider now the second sum in \eqref{Jp1Jp2formula}. Recall that the Gauss map $T$ is mixing in the sense of ergodic theory, therefore $T^2$ is ergodic. Fix $\varepsilon>0$, and let $g_p^{\pm}$ be as in Lemma \ref{gplemma}. Since $T^{2j}(p_k/q_k)=[0;a_{2j+1}, a_{2j+2}, \ldots, a_k]$ and $T^{2j} \alpha = [0;a_{2j+1}, a_{2j+2}, \ldots ]$, by construction we have
\[ |g_p^{\pm}(T^{2j} (p_k/q_k)) - g_p^{\pm}(T^{2j}\alpha)| \ll |\log [0;a_{2j+2},a_{2j+3},\ldots, a_k] - \log [0;a_{2j+2},a_{2j+3},\ldots ] | . \]
This decays exponentially fast in $k-2j$, hence $\sum_{0 \le j <k/2} g_p^{\pm}(T^{2j} (p_k/q_k)) = \sum_{0 \le j <k/2} g_p^{\pm}(T^{2j} \alpha) +O(1)$. Applying Birkhoff's pointwise ergodic theorem to $T^2$ thus yields
\[ \frac{1}{k/2} \sum_{0 \le j <k/2} g_p^{\pm} (T^{2j} (p_k/q_k)) \to \frac{1}{\log 2} \int_0^1 \frac{g_p^{\pm}(x)}{1+x} \, \mathrm{d} x \qquad \textrm{for a.e. } \alpha, \]
and after letting $\varepsilon \to 0$,
\begin{equation}\label{psipae}
\frac{\sum_{0 \le j <k/2}  g_p (T^{2j} (p_k/q_k))}{\frac{3}{8 \pi} \cdot \frac{\pi^2}{12 \log 2} k} \to \frac{1}{\frac{3}{8 \pi}} \cdot \frac{6}{\pi^2} \int_0^1 \frac{h_p(x) - \frac{1}{8} \lfloor 1/Tx \rfloor}{1+x} \, \mathrm{d}x \qquad \textrm{for a.e. } \alpha .
\end{equation}
We similarly obtain
\[ \frac{\sum_{0 \le j <k/2} g_{p'} (T^{2j} (p_k/q_k))}{\frac{3}{8 \pi} \cdot \frac{\pi^2}{12 \log 2} k} \to \frac{1}{\frac{3}{8 \pi}} \cdot \frac{6}{\pi^2} \int_0^1 \frac{h_{p'}(x) + \frac{1}{8} \lfloor 1/x \rfloor}{1+x} \, \mathrm{d}x \qquad \textrm{for a.e. } \alpha . \]
The previous two relations imply convergence in distribution, and the desired limit law follows from \eqref{Jp1Jp2formula} and \eqref{firstsumindistribution}.
\end{proof}

\begin{proof}[Proof of Theorem \ref{JpMtheorem}] First, let $\alpha \in [0,1]$ be fixed. Recall from \eqref{SNalpha-SNpkqk} that $|S_N(\alpha) -S_N(p_k/q_k)| \ll 1$ for all $0 \le N<q_k$ with a universal implied constant. Therefore if $q_k \le M \le q_K$, then $J_{p,q_k}(\alpha) \le J_{p,M} (\alpha) \le J_{p,q_K}(\alpha)$, and consequently
\begin{equation}\label{JalphaJrk}
\log J_p (p_k/q_k) - O(1) \le \log J_{p,M}(\alpha) \le \log J_p (p_K/q_K) +O(1)
\end{equation}
with universal implied constants. The reverse inequalities hold with $p'$ instead of $p$.

Now let $\alpha \sim \mu$ with a Borel probability measure $\mu$ on $[0,1]$ which is absolutely continuous with respect to the Lebesgue measure. Let $k_M^*=k_M^*(\alpha)$ be the positive integer for which $q_{k_M^*} \le M < q_{k_M^* +1}$. The convergent denominators of Lebesgue-a.e.\ $\alpha$ (and consequently, $\mu$-a.e.\ $\alpha$) satisfy the law of the iterated logarithm
\[ \limsup_{k \to \infty} \frac{\left| \log q_k - \frac{\pi^2}{12 \log 2}k \right|}{\sqrt{k \log \log k}} =c \]
with a universal constant $c>0$; in fact, the central limit theorem also holds for $\alpha \sim \mu$ \cite[Section 3.2.3]{IK}. Therefore
\[ \log q_{k_M^*} - O \left( \sqrt{\log M \log \log \log M} \right) \le \log q_{\lfloor \frac{12 \log 2}{\pi^2} \log M \rfloor} \le \log q_{k_M^*+1} + O \left( \sqrt{\log M \log \log \log M} \right) , \]
and by the general fact $q_{k+2}/q_k \ge 2$ for all $k \ge 1$,
\[ k_M^* = \frac{12 \log 2}{\pi^2} \log M + O \left( \sqrt{\log M \log \log \log M} \right) \qquad \textrm{for $\mu$-a.e. } \alpha . \]
Letting $k_M$ be the even integer closest to, say, $\frac{12 \log 2}{\pi^2} \log M - (\log M)^{3/4}$ and $K_M$ be the even integer closest to, say, $\frac{12 \log 2}{\pi^2} \log M + (\log M)^{3/4}$, we thus have $\mu (\{ \alpha \in [0,1] \, : \, k_M \le k_M^* \le K_M \})=1-o(1)$ as $M \to \infty$. By \eqref{JalphaJrk}, we can write $\log J_{p,M} (\alpha) = \log J_p (p_{k_M}/q_{k_M}) + \xi_{p,M} (\alpha)$, with an error term $\xi_{p,M}(\alpha)$ which outside a set of $\mu$-measure $o(1)$ satisfies
\[ |\xi_{p,M}(\alpha)| \ll 1+ \left| \log J_p (p_{K_M}/q_{K_M}) - \log J_p (p_{k_M}/q_{k_M}) \right| \]
with a universal implied constant. The same holds with $p'$ instead of $p$.

Recall the decomposition formula \eqref{Jp1Jp2formula} for $(\log J_p (p_{k_M}/q_{k_M}), \log J_{p'} (p_{k_M}/q_{k_M}))$. According to Lemma \ref{Jpkqklemma}, if $\alpha \sim \nu$, then
\[ \left( \frac{\log J_p (p_{k_M}/q_{k_M}) - E_{p,M}}{\sigma_M}, \frac{\log J_{p'} (p_{k_M}/q_{k_M}) - E_{p',M}}{\sigma_M} \right) \overset{d}{\to} \mathrm{Stab}(1,1) \otimes \mathrm{Stab} (1,-1) \qquad \textrm{as } M \to \infty . \]
In fact, the same holds if $\alpha \sim \mu$. Indeed, this easily follows from a mixing property of the Gauss map \cite[p.\ 166]{IK}
\[ \lim_{n \to \infty} \sup_{A \in \mathcal{F}_n^{\infty}} \left| \mu (A) - \nu (A) \right| =0, \]
where $\mathcal{F}_n^{\infty}$ denotes the $\sigma$-algebra generated by the partial quotients $a_m$, $m \ge n$. Note that the terms $j \ge n/2$ in \eqref{Jp1Jp2formula} are $\mathcal{F}_n^{\infty}$-measurable.

It remains to show that $\xi_{p,M}(\alpha)=o(\log M)$ and $\xi_{p',M}(\alpha) = o(\log M)$ in $\mu$-measure. By the decomposition formula \eqref{Jp1Jp2formula},
\[ \begin{split} | \log J_p (p_{K_M}/&q_{K_M}) - \log J_p (p_{k_M}/q_{k_M}) | \le \\ &\sum_{k_M/2 \le j< K_M/2 } \frac{a_{2j+2}}{8} + \left| \sum_{0 \le j <K_M/2} g_p (T^{2j}p_{K_M} / q_{K_M}) - \sum_{0 \le j <k_M/2} g_p (T^{2j}p_{k_M} / q_{k_M}) \right| . \end{split} \]
Recall that for any $j \ge 1$ and any real $t \ge 1$,
\[ \nu \left( \left\{ \alpha \in [0,1] \, : \, a_j \ge t \right\} \right) = \frac{1}{\log 2} \sum_{n \ge t} \log \left( 1+\frac{1}{n(n+2)} \right) \ll \frac{1}{t} . \]
Since $K_M-k_M \ll (\log M)^{3/4}$, the union bound thus yields
\[ \nu \bigg( \bigg\{ \alpha \in [0,1] \, : \, \sum_{k_M/2 \le j < K_M/2} a_{2j+2} \ge \varepsilon \log M \bigg\} \bigg) \ll \frac{1}{\varepsilon (\log M)^{1/4}} . \]
In particular, $\sum_{k_M/2 \le j < K_M/2} a_{2j+2} =o(\log M)$ in $\nu$-measure, and consequently also in $\mu$-measure. Formula \eqref{psipae} shows that
\[ \sum_{0 \le j <K_M/2} g_p (T^{2j}p_{K_M} / q_{K_M}) - \sum_{0 \le j <k_M/2} g_p (T^{2j}p_{k_M} / q_{k_M}) = o(\log M) \]
holds for Lebesgue-a.e.\ $\alpha$, and consequently also in $\mu$-measure. This finishes the proof of $\xi_{p,M}(\alpha) = o(\log M)$ in $\mu$-measure, and the same arguments show that this holds with $p'$ instead of $p$ as well.
\end{proof}

\begin{proof}[Proof of Theorem \ref{JpMtildetheorem}] This is entirely analogous to the proof of Theorem \ref{JpMtheorem}. The only difference is that instead of $|S_N(\alpha) - S_N(p_k/q_k)| \ll 1$, we use $|\tilde{S}_N (\alpha) - \tilde{S}_N (p_k/q_k)| \ll \max_{1 \le j \le k} \log (a_j+1)$, see \cite[Proposition 3]{AB2}. In particular, $|\tilde{S}_N (\alpha) - \tilde{S}_N (p_k/q_k)| \ll \log (k+1)$ for Lebesgue-a.e.\ $\alpha$, which suffices for our purposes.
\end{proof}

\section*{Acknowledgments} The author is supported by the Austrian Science Fund (FWF) project M 3260-N.

\end{document}